\documentclass[11pt,twoside]{article}

\usepackage{amsfonts,epsfig,graphicx}
\usepackage{amsmath,amssymb,amsthm}
\usepackage{fullpage}
\usepackage{epsf}
\usepackage{fancyheadings}
\usepackage{graphics}
\usepackage{amsfonts}
\usepackage{amsmath}
\usepackage{psfrag}
\usepackage{macros}
\usepackage{amsmath}
\usepackage{amsfonts}
\usepackage{psfrag}
\usepackage{multirow}
\usepackage[numbers]{natbib}
\usepackage{wasysym}
\usepackage{color}

\usepackage[bookmarks=true,colorlinks,citecolor=blue,urlcolor=blue]{hyperref}


\newtheorem{theorem}{Theorem}
\newtheorem{lemma}{Lemma}
\newtheorem{proposition}{Proposition}

\begin{document}

\begin{center}
  {\LARGE{\bf{ Lower bounds on the performance of polynomial-time\\
algorithms for sparse linear regression}}}

  \vspace{1cm}

  {\large
\begin{tabular}{ccccc}
Yuchen Zhang$^\star$ & & Martin J.\ Wainwright$^{\star, \dagger}$ &&
Michael I. Jordan$^{\star,\dagger}$
\end{tabular}
}

  \vspace{.5cm}

  \texttt{\{yuczhang,wainwrig,jordan\}@berkeley.edu} \\

  \vspace{.5cm}

  {\large $^\star$Department of Electrical Engineering and
  Computer Science
  ~~~~ $^\dagger$Department of Statistics} \\
\vspace{.1cm}

  {\large University of California, Berkeley} \\

  \vspace{.5cm}

\today
\end{center}

\vspace*{.2cm}

\begin{abstract}
Under a standard assumption in complexity theory ($\np \not \subset
\ppoly$), we demonstrate a gap between the minimax prediction risk for
sparse linear regression that can be achieved by polynomial-time
algorithms, and that achieved by optimal algorithms.  In particular,
when the design matrix is ill-conditioned, the minimax prediction loss
achievable by polynomial-time algorithms can be substantially greater
than that of an optimal algorithm.  This result is the first known gap
between polynomial and optimal algorithms for sparse linear
regression, and does not depend on conjectures in average-case
complexity.
\end{abstract}


\section{Introduction}

The past decade has witnessed a flurry of results on the performance
of polynomial-time procedures, many of them based on convex
relaxation, that aim at solving challenging optimization problems that
arise in statistics.  The large majority of these results have been of
the positive variety, essentially guaranteeing that a certain
polynomial-time procedure produces an estimate with low statistical
error; see the overviews~\cite{ChaRec12,NegRavWaiYu12} for results of
this type.  Moreover, in many cases, the resulting bounds have been
shown to be minimax-optimal, meaning that no estimator can achieve
substantially smaller error.  More recently, however, this compelling
story has begun to develop some wrinkles, in that gaps have been
established between the performance of convex relaxations and the
performance of optimal methods, notably in the context of sparse PCA
and related sparse-low-rank matrix problems (e.g.,~\cite{AmiWai08,
  BerRig12, BerRig13,KraNadVil13,Oym12}).  The main contribution of
this paper is to add an additional twist to this ongoing story, in
particular by demonstrating a fundamental gap between the performance
of polynomial-time methods and optimal methods for high-dimensional
sparse linear regression.  Notably, in contrast with the recent work
of Rigollet and Berthet~\cite{BerRig13} on sparse PCA, and subsequent
results on matrix detection~\cite{ma2013computational}, both of which
are based on average-case complexity, our result is based only on a
standard conjecture in worst-case complexity theory.

Linear regression is a canonical problem in statistics: it is based on
observing a response vector $\yvec \in \real^\numobs$ and a design
matrix $\Xmat \in \real^{\numobs \times \usedim}$ that are linked via
the linear relationship
\begin{align}
\label{eqn:standard-linear-model}
\yvec & = \Xmat \thetastar + w.
\end{align}
Here the vector $w \in \real^\numobs$ is some form of observation
noise, and our goal is to estimate the unknown vector $\thetastar \in
\real^\usedim$, known as the regression vector.  Throughout this
paper, we focus on the standard Gaussian model, in which the entries
of the noise vector $w$ are i.i.d. $N(0, \sigma^2)$ variates, and the
case of deterministic design, in which the matrix $\Xmat$ is viewed as
non-random.  In the sparse variant of this model, the regression
vector is assumed to have a relatively small number of non-zero
coefficients.  In particular, for some positive integer $\kdim <
\usedim$, the vector $\thetastar$ is said to be $\kdim$-sparse if it
has at most $\kdim$ non-zero coefficients.  Thus, our model is
parameterized by the triple $(\numobs, \usedim, \kdim)$ of sample size
$\numobs$, ambient dimension $\usedim$, and sparsity $\kdim$.

Given a $\kdim$-sparse regression problem, the most direct approach
would be to seek a $\kdim$-sparse minimizer to the least-squares cost
$\|\yvec - \Xmat \theta\|_2^2$, thereby obtaining the $\ell_0$-based
estimator
\begin{align}
\label{EqnDefnEllZeroEstimator}
\thetazero & \defn \arg \min_{\theta \in \Ball_0(\kdim)} \|\yvec -
\Xmat \theta\|_2^2.
\end{align}
Note that this estimator involves minimization over the
$\ell_0$-``ball''
\begin{align}
\label{EqnDefnEllZeroBall}
\Ball_0(\kdim) & \defn \big \{ \theta \in \real^\usedim \, \mid \,
\sum_{j=1}^\usedim \Ind[\theta_j \neq 0] \leq \kdim \big \}
\end{align}
of $\kdim$-sparse vectors.  This estimator is not easy to compute in a
brute force manner, since there are ${\usedim \choose \kdim}$ subsets
of size $\kdim$ to consider.  More generally, it is known that
computing a sparse solution to a set of linear equations is an NP-hard
problem~\cite{natarajan1995sparse}, and this intractability result has
motivated the use of various heuristic algorithms and approximations.
Recent years have witnessed an especially intensive study of methods
based on $\ell_1$-relaxation, including the basis pursuit and Lasso
estimators~\cite{tibshirani1996regression, chen1998atomic}, as well as
the Dantzig selector~\cite{candes2007dantzig}.  Essentially, these
methods are based on replacing the
$\ell_0$-constraint~\eqref{EqnDefnEllZeroBall} with its
$\ell_1$-equivalent, in either a constrained or penalized form. All of
these estimators are based on relatively simple convex optimization
problems (linear or quadratic programs), and so can be computed in
polynomial time.  Moreover, in certain cases, the performance of
$\ell_1$-based methods have been shown to meet minimax-optimal lower
bounds~\cite{raskutti2011minimax}.

Despite this encouraging progress, there remain some intriguing gaps
in the performance of \mbox{$\ell_1$-based} procedures, perhaps most
notably when assessed in terms of their \emph{mean-squared prediction
  error} $\frac{1}{\numobs} \|\Xmat \thetahat - \Xmat
\thetastar\|_2^2$.  In order to bring this issue into sharper focus,
given an estimator $\thetahat$, suppose that we evaluate its
performance in terms of the quantity
\begin{align}
\label{EqnDefnMSE}
\MSE(\thetahat; \Xmat) & \defn \sup_{\thetastar \in \Ball_0(\kdim)}
\frac{1}{\numobs} \Exs \big[\|\Xmat \thetahat - \Xmat \thetastar\|_2^2
  \big],
\end{align}
where the design matrix $\Xmat$ remains fixed, and expectation is
taken over realizations of the noise vector $\wvec \sim N(0, \sigma^2
I_{\numobs \times \numobs})$.

The criterion~\eqref{EqnDefnMSE} assesses the performance of the
estimator $\thetahat$ uniformly over the set of all $\kdim$-sparse
regression vectors.  In terms of this uniform measure, the
$\ell_0$-based estimator~\eqref{EqnDefnEllZeroEstimator} is
known~\cite{BunWegTsyb07,raskutti2011minimax} to satisfy the bound
\begin{align}
\label{EqnEllZeroBound}
\MSE(\thetazero; \Xmat) & \precsim  \frac{\sigma^2 \, \kdim
  \log \usedim}{\numobs},
\end{align}
where $\precsim$ denotes an inequality up to a universal constant,
meaning independent of all problem dependent quantities.  A noteworthy
point is that the upper bound~\eqref{EqnEllZeroBound} holds for
\emph{any} fixed design matrix $\Xmat$.

By way of contrast, most $\ell_1$-based guarantees involve imposing
certain conditions on the design matrix $\Xmat$.  One of the most
widely used conditions is the restricted eigenvalue (RE)
condition~\cite{bickel2009simultaneous,van2009conditions}, which lower
bounds the quadratic form defined by $\Xmat$ over a subset of sparse
vectors (see equation~\eqref{EqnDefnRE} to follow for a precise
definition).  Under such an RE condition, it can be shown that the
Lasso-based estimator $\thetaone$ satisfies a bound of the form
\begin{align}
\label{EqnEllOneBound}
\MSE(\thetaone; \Xmat) & \precsim \frac{1}{\RECONSQ} \;
\frac{\sigma^2 \, \kdim \log \usedim}{\numobs},
\end{align}
where $\RECON(\Xmat) \leq 1$ denotes the restricted eigenvalue
constant~\eqref{EqnDefnRE}.  Comparison of this bound to the earlier
$\ell_0$-based guarantee~\eqref{EqnEllZeroBound} shows that the only
difference is the RE constant, which is a measure of the conditioning
of the matrix $\Xmat$.  However, from a fundamental point of view, the
conditioning of $\Xmat$ has no effect on whether or not a good sparse
predictor exists; for instance, a design matrix with two duplicated
columns is poorly conditioned, but the duplication would have no
effect on sparse prediction performance.

The difference between the bounds~\eqref{EqnEllZeroBound}
and~\eqref{EqnEllOneBound} leaves open various questions, both about
the performance of the Lasso (and other $\ell_1$-based methods), as
well as polynomial-time methods more generally.  Beginning with
$\ell_1$-based methods, one possibility is that existing analyses of
prediction error are overly conservative, but that the Lasso can
actually achieve the bound~\eqref{EqnEllZeroBound}, without the
additional RE term.  When the regression vector $\thetastar$ has a
bounded $\ell_1$-norm, then it is possible to achieve a prediction
error bound that does \emph{not} involve the RE
constant~\cite{BunWegTsyb07}, but the resulting rate is ``slow'',
decaying as $1/\sqrt{\numobs}$ instead of the rate $1/\numobs$ given
in equation~\eqref{EqnEllOneBound}.  Foygel and Srebro~\cite{FoySre11}
asked whether this slow rate could be improved without an RE
condition, and gave a partial negative answer in the case $\kdim = 2$,
constructing a $2$-sparse regression vector and a design matrix
violating the RE condition for which the Lasso prediction error is
lower bounded by $1/\sqrt{\numobs}$.  In this paper, we ask whether
the same type of gap persists if we allow for \emph{all
  polynomial-time estimators}, instead of just the Lasso.  Our main
result is to answer this question in the affirmative: we show that
there is a family of design matrices $\Xbad$ such that, under a
standard conjecture in computational complexity ($\np \not \subset
\ppoly$), for any estimator $\thetapoly$ that can be computed in
polynomial time, its mean-squared error is lower bounded as
\begin{align*}
\MSE(\thetapoly; \Xbad) & \succsim \frac{1}{\RECON^2(\Xbad)}
\frac{\sigma^2 \kdim^{1 - \HACKPAR} \log \usedim}{\numobs},
\end{align*}
where $\HACKPAR > 0$ is an arbitrarily small positive scalar.
Consequently, we see that there is a fundamental gap between the
performance of polynomial-time methods and that of the optimal
$\ell_0$-based method.

The remainder of this paper is organized as follows.  We begin in
Section~\ref{SecBackground} with background on sparse linear
regression and restricted eigenvalue conditions. We then introduce
some background on complexity theory, followed by the statement of our
main result in Section~\ref{SecMain}. The proof of the main theorem is
given in Section~\ref{sec:proof-of-main-theorem}, with more technical
results deferred to the appendices.


\section{Background and problem set-up}
\label{SecBackground}

We begin with background on sparse linear regression, then introduce restricted eigenvalue conditions.
These notions allow us to give a precise characterization
of the mean-squared prediction error that can
be achieved by the $\ell_0$-based algorithm and by a thresholded version of the Lasso algorithm.

\subsection{Estimators for sparse linear regression}

As previously described, an instance of the sparse linear regression
problem is based on observing a pair $(\yvec, \Xmat) \in \real^\numobs
\times \real^{\numobs \times \usedim}$ that are linked via the linear
equation~\eqref{eqn:standard-linear-model}, where the unknown
regression vector $\thetastar \in \real^\usedim$ is assumed to be
$\kdim$-sparse, and so belongs to the $\ell_0$-ball $\Ball_0(\kdim)$.
An estimator $\thetahat$ of the regression vector is a (measurable)
function $(\yvec, \Xmat) \mapsto \thetahat \in \real^\usedim$, and our
goal is to determine an estimator that is both $\kdim$-sparse, and has
low prediction error $\frac{1}{\numobs} \|\Xmat \thetahat - \Xmat
\thetastar\|_2^2$.  Accordingly, we let $\ALG(\kdim)$ denote the
family of all estimators that return vectors in $\Ball_0(\kdim)$.
Note that the $\ell_0$-based estimator $\thetazero$, as previously
defined in equation~\eqref{EqnDefnEllZeroEstimator}, belongs to the
family $\ALG(\kdim)$ of estimators.  The following result provides a
guarantee for this estimator: \\

\begin{proposition}[Prediction error for $\thetazero$]
\label{PropThetaZero}
There are universal constants \mbox{$\UNICON_j, j = 1, 2$} such for
any design matrix $\Xmat$, the $\ell_0$-based estimator $\thetazero$
satisfies
\begin{align}
\label{EqnThetaZeroBound}
\frac{1}{\numobs} \| \Xmat \thetazero - \Xmat \thetastar\|_2^2 & \leq
\UNICON_1 \frac{\sigma^2 \kdim \log \usedim}{\numobs} \qquad \mbox{for
  any $\thetastar \in \Ball_0(\kdim)$}
\end{align}
with probability at least $1 - 2 e^{-\UNICON_2 \kdim \log \usedim}$.
\end{proposition}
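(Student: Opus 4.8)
The plan is to use the classical basic-inequality argument for constrained least squares, combined with a union bound over the $\binom{\usedim}{2\kdim}$ possible supports of the estimation error. Since $\thetazero$ minimizes $\theta \mapsto \|\yvec - \Xmat\theta\|_2^2$ over $\Ball_0(\kdim)$ and $\thetastar$ is feasible for this problem, we have $\|\yvec - \Xmat\thetazero\|_2^2 \le \|\yvec - \Xmat\thetastar\|_2^2$; substituting $\yvec = \Xmat\thetastar + \wvec$ and writing $\Delta \defn \thetazero - \thetastar$, this rearranges to
\begin{align*}
\|\Xmat\Delta\|_2^2 \;\le\; 2\,\langle \wvec,\, \Xmat\Delta\rangle .
\end{align*}
The key structural observation is that $\Delta$ is a difference of two $\kdim$-sparse vectors, hence supported on some (random) set $S$ with $|S| \le 2\kdim$, so $\Xmat\Delta$ lies in the column span of the submatrix $\Xmat_S$, a subspace of dimension at most $2\kdim$.

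Next I would bound the cross term uniformly over this data-dependent subspace. Writing $\Pi_S$ for orthogonal projection onto the range of $\Xmat_S$, the Cauchy--Schwarz inequality gives $\langle \wvec, \Xmat\Delta\rangle = \langle \Pi_S\wvec, \Xmat\Delta\rangle \le \|\Pi_S\wvec\|_2\,\|\Xmat\Delta\|_2$, and combining with the display above yields $\|\Xmat\Delta\|_2 \le 2\|\Pi_S\wvec\|_2 \le 2\max_{|T| = 2\kdim}\|\Pi_T\wvec\|_2$. It then remains to control this maximum over all index sets $T$ of size $2\kdim$. For each fixed $T$, the quantity $\|\Pi_T\wvec\|_2^2/\sigma^2$ is stochastically dominated by a $\chi^2$ random variable with $2\kdim$ degrees of freedom, so standard Gaussian/chi-square concentration gives an exponential tail; a union bound over the $\binom{\usedim}{2\kdim} \le \exp(2\kdim\log\usedim)$ choices of $T$ then shows that $\max_{|T| = 2\kdim}\|\Pi_T\wvec\|_2^2 \le \UNICON_1'\,\sigma^2\kdim\log\usedim$ with probability at least $1 - 2e^{-\UNICON_2\kdim\log\usedim}$. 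On this event, $\|\Xmat\Delta\|_2^2 \le 4\UNICON_1'\,\sigma^2\kdim\log\usedim$, and dividing by $\numobs$ gives the stated bound.

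The routine ingredients here are the basic inequality and the chi-square tail bound. The only place requiring genuine care is the union-bound step: the combinatorial factor $\exp(2\kdim\log\usedim)$ counting the supports must be absorbed into the deviation term of the chi-square bound while still leaving a net failure probability of the advertised form $2e^{-\UNICON_2\kdim\log\usedim}$. This forces the deviation parameter, and hence the constant $\UNICON_1$, to be a fixed multiple of $\kdim\log\usedim$ larger than the ``in-expectation'' scale $2\kdim\sigma^2$; I would handle it by applying the chi-square tail bound with parameter $t \asymp \kdim\log\usedim$ and fixing the universal constants accordingly. Note that no restricted-eigenvalue or other conditioning assumption on $\Xmat$ enters anywhere, consistent with the claim that the bound holds for every fixed design matrix.
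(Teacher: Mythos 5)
Your argument is correct and is indeed the standard proof of this fact.  One point worth flagging: the paper itself does not prove Proposition~\ref{PropThetaZero}; it states the bound and cites~\cite{BunWegTsyb07,raskutti2011minimax}, so there is no in-paper argument to compare against.  Your chain of reasoning — the basic inequality $\|\Xmat\Delta\|_2^2 \le 2\langle w, \Xmat\Delta\rangle$ from optimality of $\thetazero$ and feasibility of $\thetastar$, the observation that $\Delta$ has support of size at most $2\kdim$ so $\Xmat\Delta$ lies in a data-dependent subspace of dimension at most $2\kdim$, the projection/Cauchy--Schwarz step reducing everything to $\max_{|T|=2\kdim}\|\Pi_T w\|_2$, chi-square tails for each fixed $T$, and a union bound over $\binom{\usedim}{2\kdim}\le e^{2\kdim\log\usedim}$ supports — is exactly the argument one finds in the cited sources, and the bookkeeping you describe (taking the chi-square deviation parameter of order $\kdim\log\usedim$ so that it dominates the combinatorial entropy while still leaving a net failure probability $2e^{-\UNICON_2\kdim\log\usedim}$) is the right way to finish it.  No RE or normalization condition on $\Xmat$ is used, matching the claim that the bound holds for every fixed design.
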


We also consider another member of the family $\ALG(\kdim)$---namely,
a thresholded version of the Lasso
estimator~\cite{tibshirani1996regression, chen1998atomic}.  The
ordinary Lasso estimate $\thetaregparn$ based on regularization
parameter $\regparn > 0$ is given by
\begin{align*}
\thetaregparn & \defn \arg \min_{ \theta \in \real^\usedim} \Big \{
\frac{1}{2 \numobs} \|\yvec - \Xmat \theta\|_2^2 + \regparn
\|\theta\|_1 \Big \}.
\end{align*}
In general, this estimator need not be $\kdim$-sparse, but a
thresholded version of it can be shown to have similar guarantees.
Overall, we define the \emph{thresholded Lasso} estimator $\thetathr$
based on the following two steps:
\begin{enumerate}
\item[(a)] Compute the ordinary Lasso estimate $\thetaregparn$ with
  $\regparn = 4 \sigma \sqrt{\frac{\log \usedim}{\numobs}}$.
\item[(b)] Truncate $\thetaregparn$ to its $\kdim$ entries that are
  the largest in absolute value, thereby obtaining the estimate
  $\thetathr$.
\end{enumerate}
By construction, the estimator $\thetathr$ belongs to the family
$\ALG(\kdim)$.  The choice of regularization parameter given in step
(a) is a standard one for the Lasso.




\subsection{Restricted eigenvalues and $\ell_1$-guarantees}

We now define the notion of a (sparse) restricted eigenvalue (RE), and
then discuss guarantees on the Lasso-based estimator $\thetathr$ that
hold under such an RE condition.  Restricted eigenvalues are defined
in terms of subsets $\PlainSset$ of the index set $\{1, 2, \ldots,
\usedim \}$, and a cone associated with any such subset.  In
particular, letting $\PlainSbar$ denote the complement of
$\PlainSset$, we define the cone
\begin{align*}
\ConeSet(\PlainSset) & \defn \big \{ \theta \in \real^\usedim \, \mid
\, \|\theta_{\PlainSbar}\|_1 \leq 3 \|\theta_{\PlainSset}\|_1 \big \}.
\end{align*}
Here $\|\theta_{\PlainSbar}\|_1 \defn \sum_{j \in \PlainSbar}
|\theta_j|$ corresponds to the $\ell_1$-norm of the coefficients
indexed by $\PlainSbar$, with $\|\theta_{\PlainSset}\|_1$ defined
similarly.  Note that any vector $\thetastar$ supported on
$\PlainSset$ belongs to the cone $\ConeSet(\PlainSset)$; in addition,
it includes vectors whose $\ell_1$-norm on the ``bad'' set
$\PlainSbar$ is small relative to their $\ell_1$-norm on $\PlainSset$.

\begin{mydefn}[Restricted eigenvalue (RE) condition]
Given triplet $(\numobs, \usedim, \kdim)$, the
matrix $\Xmat \in \real^{\numobs \times \usedim}$ is said to satisfy a uniform $\RECON$-RE condition if
\begin{align}
\label{EqnDefnRE}
\frac{1}{\numobs} \|X \theta\|_2^2 & \geq \RECON \|\theta\|_2^2 \qquad
\mbox{for all $\theta \in \bigcup
\limits_{|\PlainSset| = \kdim} \ConeSet(\PlainSset)$.}
\end{align}
Moreover, the restricted eigenvalue constant of $\Xmat$, denoted by $\RECON(\Xmat)$, is the greatest $\RECON$ such that
$X$ satisfies the condition~\eqref{EqnDefnRE}.
\end{mydefn}

The RE condition~\eqref{EqnDefnRE} and related quantities have been
studied extensively in past work on basis pursuit and the Lasso
(e.g.,~\cite{bickel2009simultaneous,meinshausen2009lasso,
  raskutti2011minimax}); see the paper~\cite{van2009conditions} for an
overview of the different types of RE parameters.  Note that it
characterizes the curvature of the quadratic form specified by $\Xmat$
when restricted to a certain subset of relatively sparse vectors.
When the RE constant $\RECON(\Xmat)$ is close to zero, there are relatively sparse vectors
$\thetatil$ such that $\|\Xmat \thetatil - \Xmat \thetastar\|_2$ is
small but $\|\thetatil - \thetastar\|_2$ is large.  Given that we
observe only a noisy version of the product $\Xmat \thetastar$, it is
then difficult to distinguish $\thetastar$ from other sparse vectors,
which makes estimating $\thetastar$ from the data difficult.  Thus, it is
natural to impose an RE condition if the goal is to produce an
estimate $\thetahat$ such that the $\ell_2$-norm error $\|\thetahat -
\thetastar\|_2$ is small.  Indeed, for $\ell_2$-norm estimation,
Raskutti et al.~\cite{raskutti2011minimax} show that a closely related
condition is necessary for any method.

In contrast, it is worth noting that the RE condition is not a
necessary condition for minimizing the prediction loss
$\norms{\Xmat\thetahat - \Xmat\thetastar}_2$, since an estimator far
apart from $\thetastar$ may still achieve small prediction
error. However, the RE condition turns to be an important criterion
for $\ell_1$-based methods to guarantee good prediction
performance. Under the normalization condition
\begin{align}
\label{EqnSuperAnnoy}
\frac{\|\Xmat \theta\|_2^2}{\numobs} & \leq \|\theta\|_2^2 \qquad
\mbox{for all $\theta \in \Ball_0(2 \kdim)$},
\end{align}
the following result provides such a guarantee for the thresholded
Lasso estimator:

\begin{proposition}[Prediction error for thresholded Lasso]
\label{PropLassoThresh}
There are universal constants \mbox{$\UNICON_j, j = 3, 4$} such that
for any design matrix $\Xmat$ satisfying the normalization
condition~\eqref{EqnSuperAnnoy} and having the RE constant
$\RECON(\Xmat) > 0$, the thresholded Lasso estimator $\thetathr$
satisfies
\begin{align}
\label{EqnLassoThreshBound}
\frac{1}{\numobs} \| \Xmat \thetathr - \Xmat \thetastar\|_2^2 & \leq
\frac{\UNICON_3}{\RECONSQ} \frac{\sigma^2 \kdim \log \usedim}{\numobs}
\qquad \mbox{for any $\thetastar \in \Ball_0(\kdim)$}
\end{align}
with probability at least $1 - 2 e^{-\UNICON_4 \kdim \log \usedim}$.
\end{proposition}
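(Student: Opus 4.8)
The plan is to analyze the ordinary Lasso iterate $\thetaregparn$ first, bound its $\ell_2$-error by the usual restricted-eigenvalue argument, then transfer that bound to the truncated vector $\thetathr$ and re-express it as a prediction-error bound via the normalization condition~\eqref{EqnSuperAnnoy}. Throughout I would condition on the event $\mathcal{E} = \{\, \|\tfrac{1}{\numobs}\Xmat^{\top} w\|_\infty \le \regparn/2 \,\}$. Applying the normalization condition~\eqref{EqnSuperAnnoy} to standard basis vectors (which are $1$-sparse, hence in $\Ball_0(2\kdim)$) forces every column of $\Xmat$ to have squared Euclidean norm at most $\numobs$, so each coordinate of $\tfrac{1}{\numobs}\Xmat^{\top}w$ is a zero-mean Gaussian with variance at most $\sigma^2/\numobs$; a Gaussian tail bound, a union bound over the $\usedim$ coordinates, and the prescribed choice $\regparn = 4\sigma\sqrt{\log\usedim/\numobs}$ then show that $\mathcal{E}$ holds with probability at least $1 - 2e^{-\UNICON_4\kdim\log\usedim}$, as claimed. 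Everything that follows is deterministic on $\mathcal{E}$.

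Next I would let $S$ be the support of $\thetastar$ and write $\Delta = \thetaregparn - \thetastar$. Optimality of $\thetaregparn$ for the Lasso objective, together with the substitution $\yvec = \Xmat\thetastar + w$, gives the basic inequality $\tfrac{1}{2\numobs}\|\Xmat\Delta\|_2^2 \le \tfrac{1}{\numobs}\langle w, \Xmat\Delta\rangle + \regparn(\|\thetastar\|_1 - \|\thetaregparn\|_1)$. On $\mathcal{E}$ the noise term is at most $\tfrac{\regparn}{2}\|\Delta\|_1$, and since $\thetastar$ is supported on $S$ we have $\|\thetastar\|_1 - \|\thetaregparn\|_1 \le \|\Delta_S\|_1 - \|\Delta_{\PlainSbar}\|_1$; dropping the nonnegative left-hand side then yields $\|\Delta_{\PlainSbar}\|_1 \le 3\|\Delta_S\|_1$, i.e.\ $\Delta \in \ConeSet(S)$, so the RE condition~\eqref{EqnDefnRE} applies. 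Combining the lower bound $\tfrac{1}{\numobs}\|\Xmat\Delta\|_2^2 \ge \RECON\|\Delta\|_2^2$ with the upper bound $\tfrac{1}{\numobs}\|\Xmat\Delta\|_2^2 \le 3\regparn\|\Delta_S\|_1 \le 3\regparn\sqrt{\kdim}\,\|\Delta\|_2$ produces the $\ell_2$-error bound $\|\Delta\|_2 \le 3\regparn\sqrt{\kdim}/\RECON$.

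Finally I would relay this to $\thetathr$. Since $\thetathr$ is the best $\kdim$-term approximation of $\thetaregparn$ in $\ell_2$ and $\thetastar$ is $\kdim$-sparse, $\|\thetathr - \thetaregparn\|_2 \le \|\thetastar - \thetaregparn\|_2 = \|\Delta\|_2$, so by the triangle inequality $\|\thetathr - \thetastar\|_2 \le 2\|\Delta\|_2$; moreover $\thetathr - \thetastar \in \Ball_0(2\kdim)$. Applying the normalization condition~\eqref{EqnSuperAnnoy} to this $2\kdim$-sparse vector gives $\tfrac{1}{\numobs}\|\Xmat\thetathr - \Xmat\thetastar\|_2^2 \le \|\thetathr - \thetastar\|_2^2 \le 4\|\Delta\|_2^2 \le 36\,\regparn^2\kdim/\RECONSQ$, and substituting $\regparn = 4\sigma\sqrt{\log\usedim/\numobs}$ yields the bound~\eqref{EqnLassoThreshBound} (e.g.\ with $\UNICON_3 = 576$).

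The basic inequality and the cone/RE manipulations are by now standard; the step that needs genuine care is the relay in the last paragraph. Because the object of interest is $\thetathr$ rather than $\thetaregparn$, one cannot simply invoke the ``direct'' prediction-error analysis of the Lasso, which would cost only a single factor $1/\RECON$; instead one is forced to route through the Lasso's $\ell_2$-error, which is intrinsically of order $\regparn\sqrt{\kdim}/\RECON$, and this factor then gets squared on passing back to prediction error through~\eqref{EqnSuperAnnoy}. This is exactly what produces the $1/\RECONSQ$ in~\eqref{EqnLassoThreshBound}, and it is also why the normalization condition~\eqref{EqnSuperAnnoy} is assumed. A secondary, purely bookkeeping point is organizing the tail/union-bound estimate in the first step so that the failure probability comes out in the stated exponential form.
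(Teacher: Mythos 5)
Your argument is correct, and the decisive step is genuinely different from the paper's. The paper transfers from the ordinary Lasso error $\PertHat = \thetaregparn - \thetastar$ to the truncated error $\PertTil = \thetathr - \thetastar$ via an auxiliary Lemma~\ref{LemAux} whose proof pairs the ``good'' support indices lost in truncation with ``bad'' indices that were retained, compares magnitudes via inequality~\eqref{eqn:truncated-index-value-comparison}, and runs a two-case analysis to obtain $\|\PertTil\|_2^2 \le 5\|\PertHat\|_2^2$. You instead observe that hard-thresholding to the $\kdim$ largest entries is, by construction, the best $\kdim$-sparse $\ell_2$-approximation to $\thetaregparn$; since $\thetastar \in \Ball_0(\kdim)$ is an admissible competitor, $\|\thetathr - \thetaregparn\|_2 \le \|\thetastar - \thetaregparn\|_2$, and the triangle inequality gives $\|\PertTil\|_2 \le 2\|\PertHat\|_2$ in one line. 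This is cleaner, avoids the case analysis entirely, and even yields a slightly sharper relay constant ($4$ versus $5$); the remainder of the structure (noting $\PertTil \in \Ball_0(2\kdim)$ so that the normalization~\eqref{EqnSuperAnnoy} converts $\ell_2$-error to prediction error, and appealing to the RE-based $\ell_2$-error bound for the Lasso) coincides with the paper, except that you derive the Lasso bound from the basic inequality rather than citing Corollary~2 of~\cite{NegRavWaiYu12}.

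One loose end worth flagging: you assert that the Gaussian tail bound plus a union bound over $\usedim$ coordinates gives the event $\mathcal{E} = \{\|\Xmat^\top w\|_\infty/\numobs \le \regparn/2\}$ probability at least $1 - 2e^{-\UNICON_4 \kdim \log\usedim}$. With the prescribed $\regparn = 4\sigma\sqrt{\log\usedim/\numobs}$ this argument in fact produces only $1 - 2e^{-c\log\usedim}$ for a universal $c$ (each coordinate fails with probability $\le 2\usedim^{-2}$, and the union bound over $\usedim$ coordinates costs a factor $\usedim$); the extra factor of $\kdim$ in the exponent does not come out of this calculation, and a universal constant $\UNICON_4$ cannot be chosen to absorb it. So the probability claim as you wrote it does not follow from the step you describe. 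This is largely a bookkeeping point — the scaling of the bound~\eqref{EqnLassoThreshBound} and the $1/\RECONSQ$ dependence, which are what matter for Theorem~\ref{ThmMain}, are unaffected — and the paper's own appeal to the cited corollary has the same tension, but you should either weaken the stated failure probability to match what your union bound delivers or supply a separate argument for the stronger exponent.
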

\noindent
See Appendix~\ref{AppLassoThresh} for the proof of this claim.  Apart
from different numerical constants, the main difference between the
guarantee~\eqref{EqnLassoThreshBound} and our earlier
bound~\eqref{EqnThetaZeroBound} for the $\ell_0$-based estimator is
the $1/\RECON^2(\Xmat)$ term.  The RE constant $\RECON(\Xmat)$ is a
dimension-dependent quantity, since it is a function of the $\numobs
\times \usedim$ design matrix.


\section{Main result and its consequences}
\label{SecMain}

Thus far, we have considered two estimators in the class
$\ALG(\kdim)$---namely, the $\ell_0$-constrained estimator
$\thetazero$ and the thresholded Lasso estimator $\thetathr$.  We also
proved associated guarantees on their prediction error
(Propositions~\ref{PropThetaZero} and~\ref{PropLassoThresh}
respectively), showing a $1/\RECONSQ$ gap between their respective
guarantees.  Our main result shows that this gap is \emph{not a
  coincidence}: more fundamentally, it is a characterization of the
gap between an optimal algorithm and the class of all polynomial-time
algorithms.\\

In order to state our main result, we need to make precise a
particular notion of a polynomial-efficient estimator. Since the
observation $(\Xmat,y)$ consists of real numbers, any efficient
algorithm can only take a finite-length representation of the input.
Consequently, we need to introduce an appropriate notion of
discretization, as has been done in past work on matrix
detection~\citep{ma2013computational}.  We begin by defining an
appropriate form of \emph{input quantization}: for any input value $x$
and integer $\quantlevel$, the operator
\begin{align*}
\floor{x}_\quantlevel & \defn 2^{-\quantlevel} \floor{2^\quantlevel x}
\end{align*}
represents a $2^{-\quantlevel}$-precise quantization of $x$. (Here
$\floor{u}$ denotes the largest integer smaller than or equal to $u$.)
Given a real value $x$, an efficient estimator is allowed to take
$\floor{x}_\quantlevel$ as its input for some finite choice
$\quantlevel$. We denote by $\size(x; \quantlevel)$ the length of
binary representation of $\floor{x}_\quantlevel$, and denote by
$\size(\Xmat,y; \quantlevel)$ the total length of the discretized
matrix vector pair $(X, y)$.

The following definition of efficiency is parameterized in terms of
three quantities: (i) a positive integer $\Lint$, corresponding to the
number of bits required to implement the estimator as a computer
program; (ii) a polynomial function $\pFunb$ of the triplet $(\numobs,
\usedim, \kdim)$, corresponding to the discretization accuracy of the
input, and (iii) a polynomial function $\pFunc$ of input size,
corresponding to the runtime of the program.

\begin{mydefn}[Polynomial-efficient estimators]
\label{DefnEfficientEstimators}
Given polynomial functions $\pFunb: (\PosInt)^3 \rightarrow \real_+$,
$\pFunc: \PosInt \rightarrow \real_+$ and a positive integer $\Lint
\in \PosInt$, an estimator $(y, X) \mapsto \thetahat(y, X)$ is said to
be \emph{$(\Lint, \pFunb, \pFunc)$-efficient} if:
\begin{itemize}
\item It can be represented by a computer program that is encoded in
  $\Lint$ bits.
\item For every problem of scale $(\numobs, \usedim, \kdim)$, it
  accepts inputs quantized to accuracy $\floor{\cdot}_\quantlevel$
  where the quantization level is bounded as $\quantlevel \leq
  \pFunb(\numobs, \usedim, \kdim)$.
\item For every input $(\Xmat,y)$, it is guaranteed to terminate in
  time $\pFunc(\size(\Xmat,y;\quantlevel))$.
\end{itemize}
\end{mydefn}
According to this definition, if we choose a sufficiently large code
length---say $\Lint =10^{16}$---and polynomial functions $\pFunb$ and
$\pFunc$ that grow sufficiently fast---say \mbox{$\pFunb(\numobs,
  \usedim, \kdim) = (\numobs \usedim \kdim)^{100}$} and
\mbox{$\pFunc(s) = s^{100}$}---then the class of $(\Lint, \pFunb,
\pFunc)$-efficient algorithms allow estimators to take sufficiently
accurate input, and covers all estimators that are reasonably
efficient in terms of storage and running time.

To present the main result, we require a few more notions from
complexity theory.  See the book~\cite{arora2009computational} for a
more detailed introduction.  In complexity theory, the class $\poly$
corresponds to problems that are solvable in polynomial time by a
Turing machine.  A closely related class denoted by \ppoly,
corresponds to all problems solvable in polynomial time by a Turing
machine with a so-called advice string (meaning a side-input to the
machine) that is of polynomial length.  Although it is known that the
class \ppoly~is strictly bigger than the class \poly
~(e.g,~\cite{arora2009computational}), it is widely believed that $\np
\not \subset \ppoly$.  Accordingly, throughout this paper, we impose
the following assumption: \\

\begin{assumption}
\label{assumption:np-notin-ppoly}
The class $\np$ is not contained in the class $\ppoly$.
\end{assumption}

\noindent Based on Assumption~\ref{assumption:np-notin-ppoly}, we are
ready to present the main result.  In this statement, we use
$\UNICON_j, j = 5, 6$ to denote universal constants independent of
$(\numobs, \usedim, \kdim)$, $(\pFuna,\pFunb, \pFunc)$ and $(\RECON, \sigma,\hackpar)$.

\begin{theorem}
\label{ThmMain}
If $\np \not \subset \ppoly$, then for any positive integer $\Lint$,
any scalar $\hackpar \in (0,1)$, any polynomial functions
\mbox{$\pFunb: (\PosInt)^3 \rightarrow \real_+$} and
\mbox{$\pFuna,\pFunc: \PosInt \rightarrow \real_+$}, there is a
sparsity level $\kdim \geq 1$ such that the following holds: \\

\noindent For any dimension \mbox{$\usedim \in [4\kdim,
    \pFuna(\kdim)]$,} any sample size $\numobs$ in the interval
\mbox{$[\UNICON_5 \kdim \log \usedim, \pFuna(\kdim)]$}, and any
scalar \mbox{$\RECON \in [2^{-\pFunb(\numobs,\usedim,\kdim)},
    \frac{1}{24 \sqrt{2}})$,} there is a matrix $\Xmat \in \R^{\numobs
    \times \usedim}$ such that
\begin{enumerate}
  \item[(a)] It satisfies the normalization
    condition~\eqref{EqnSuperAnnoy}, and has an RE constant
    $\RECON(\Xmat)$ that is bounded as \mbox{$| \RECON(\Xmat) - \RECON |
      \leq 2^{-\pFunb(\numobs,\usedim,\kdim)}$.}
  \item[(b)] For any $(\Lint, \pFunb, \pFunc)$-efficient estimator
    $\thetahat\in \ALG(\kdim)$, the mean-squared prediction error is
    lower bounded as
\begin{align}
\label{eqn:main-theorem-lower-bound}
\max_{\thetastar \in \Ball_0(\kdim)} \Exs \Big [\frac{\|\Xmat
    (\thetahat - \thetastar)\|_2^2}{\numobs} \Big] & \geq
\frac{\UNICON_6}{\RECON^2} \; \frac{\sigma^2 \kdim^{1-\hackpar} \log
  \usedim}{\numobs}.
  \end{align}
\end{enumerate}
\end{theorem}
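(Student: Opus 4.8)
The plan is to establish the lower bound by reduction from a known NP-hard problem, so that any polynomial-time estimator with small prediction error would yield a polynomial-time (indeed \ppoly) algorithm for that problem. The natural candidate is the problem of finding a sparse solution to a linear system, or a suitable gap/promise version thereof (cf.\ \cite{natarajan1995sparse}): deciding whether there exists a $\kdim$-sparse vector $\theta$ with $\Xmat_0 \theta = b$ versus every $\kdim$-sparse vector being far from explaining $b$. The first step is to fix such a hard problem and a family of ``hard instances'' $(\Xmat_0, b)$ on which the promise gap holds; these instances will be the seed from which we build the design matrix $\Xbad$.

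The second step is the construction of $\Xbad$. Given $\RECON$ and the triplet $(\numobs, \usedim, \kdim)$, I would embed the hard instance into a larger design matrix by combining a ``well-conditioned part'' — e.g.\ a scaled near-orthonormal block guaranteeing the normalization condition~\eqref{EqnSuperAnnoy} and contributing the bulk of the RE behavior — with a rank-deficient or nearly-collinear block carrying the combinatorial information of $(\Xmat_0, b)$, scaled by a factor proportional to $\RECON$ so that the overall restricted eigenvalue constant is $\RECON(\Xbad) = \RECON \pm 2^{-\pFunb(\numobs,\usedim,\kdim)}$. This is exactly where the quantization level $\pFunb$ enters: the matrix entries must be representable to accuracy $2^{-\pFunb}$, and the slack in part~(a) of the theorem is precisely the rounding error incurred when the ideal real-valued construction is discretized. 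Verifying~\eqref{EqnSuperAnnoy} and the RE bound for this $\Xbad$ is a concrete but routine eigenvalue computation on a block matrix.

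The third step is the reduction argument. Suppose toward a contradiction that some $(\Lint, \pFunb, \pFunc)$-efficient estimator $\thetahat \in \ALG(\kdim)$ achieves, for every $\kdim$-sparse $\thetastar$,
\begin{align*}
\Exs\Big[\tfrac{1}{\numobs}\|\Xbad(\thetahat - \thetastar)\|_2^2\Big] & < \frac{\UNICON_6}{\RECON^2}\,\frac{\sigma^2 \kdim^{1-\hackpar}\log\usedim}{\numobs}.
\end{align*}
Feeding this estimator the quantized data $(\floor{\Xbad}_\quantlevel, \floor{\yvec}_\quantlevel)$ with $\yvec = \Xbad\thetastar + \wvec$, Markov's inequality shows that with constant probability $\thetahat$ lands in a prediction ball of radius $\ll \RECON \sqrt{\kdim}$ around $\Xbad\thetastar$ (in the appropriately normalized metric). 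Because the combinatorial block was scaled by $\RECON$, dividing out that factor converts a good prediction estimate into a good reconstruction of the sparse solution of the embedded instance $(\Xmat_0, b)$, with enough accuracy to decide the promise gap. Running this with the noise level $\sigma$ chosen tiny (or averaging over several independent noise draws, which a polynomial-time machine can afford) boosts the success probability; then standard amplification plus the fact that the estimator is described by a fixed $\Lint$-bit program and runs in time $\pFunc(\size(\Xbad,\yvec;\quantlevel))$ — itself polynomial in $(\numobs,\usedim,\kdim)$ because $\usedim,\numobs \le \pFuna(\kdim)$ and $\quantlevel \le \pFunb$ — exhibits a nonuniform polynomial-time algorithm, i.e.\ the hard problem lies in \ppoly (the advice string encodes the description of $\Xbad$ for each input length). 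This contradicts Assumption~\ref{assumption:np-notin-ppoly}. The choice of $\kdim$ comes last: it must be taken large enough (as a function of $\Lint$, $\hackpar$, and the polynomials $\pFuna,\pFunb,\pFunc$) that the $\kdim^{1-\hackpar}$ loss — i.e.\ the $\kdim^{-\hackpar}$ shortfall from the optimal $\kdim$ rate — still dominates the overhead introduced by the reduction (padding dimensions, rounding error, amplification repetitions), so that the promise gap of the hard instance maps onto a genuine $\kdim^{1-\hackpar}$-sized prediction error.

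The main obstacle is the quantitative bookkeeping in the reduction: one must choose the embedding scale, the noise level $\sigma$, the number of amplification rounds, and the sparsity $\kdim$ so that (i) the RE constant is pinned to $\RECON$ within the allowed $2^{-\pFunb}$ tolerance, (ii) the assumed prediction accuracy translates to reconstruction accuracy finer than the instance's promise gap, and (iii) every intermediate object stays polynomial-size so the composed algorithm is genuinely polynomial-time. Getting the exponent to be $\kdim^{1-\hackpar}$ rather than, say, $\kdim^{1/2}$ requires the hard instances to have a strong (near-optimal) hardness-of-approximation gap for sparse recovery, so invoking or re-deriving a sufficiently sharp inapproximability result for the sparse linear system problem is the crux; the rest is assembly.
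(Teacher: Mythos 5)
Your high-level outline — reduce from an NP-hard sparse-linear-system problem, build a design matrix whose RE constant is pinned to $\RECON$, and show that a good polynomial-time estimator would place that problem in $\ppoly$ — matches the paper's strategy, but several of the concrete steps you describe either go in the wrong direction or skip the genuinely hard parts.

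First, the design matrix must be a \emph{fixed gadget}: the theorem asserts a single $\Xmat$ that works for all NP-hard instances of a given size, so the instance cannot be ``carried'' by a block of the design matrix as you propose. Instead, the instance has to be encoded in the response vector, equivalently in $\thetastar$. This is why the paper first constructs one universal matrix $M$ (via X3C) and proves the problem $M u = y$, $u$ sparse, is hard over all $y$ in a set $\RESP$ — a property that does not follow from Natarajan's reduction, where the matrix itself varies with the instance. Your outline cites~\cite{natarajan1995sparse} but does not notice that this ``fixed-matrix'' reformulation is a nontrivial new construction.

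Second, you move from ``the estimator has small expected error for every $\thetastar$'' directly to ``it decides the promise gap,'' with amplification patching up the constant success probability. This misses the worst-case-to-average-case step entirely. Because the estimator is an arbitrary $\Lint$-bit program, for any single hard $\thetastar$ the answer could be hard-coded, and the advice in $\ppoly$ makes this even worse. The paper handles this by establishing distributional hardness (Lemma~\ref{lemma:probablistic-hardness}): there is a distribution over $\uvec^*$ on which any short, fast solver fails with probability $1-1/\pfunc(\mdim)$, proved via a cascade construction that would otherwise yield a $\ppoly$ algorithm for the worst case. Without some argument of this kind, the reduction does not close. Also, ``averaging over several independent noise draws'' and ``choosing $\sigma$ tiny'' do not work here: $\sigma$ is a free parameter of the theorem, and the paper instead controls the noise by a change-of-measure argument — the signal $\rho \uvec^*$ is scaled $\propto 1/\RECON$ while the Gaussian block is scaled $\propto \RECON$, so the likelihood ratio between the two relevant response distributions is $O(1)$. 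Your proposal has the $\RECON$-scaling on the combinatorial block rather than the Gaussian block, which would in fact make the problem easier for small $\RECON$ (the signal would be swamped by noise), not harder.

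Third, your diagnosis of the $\kdim^{1-\hackpar}$ exponent is wrong. It does not require a sharp inapproximability gap for sparse recovery; the paper uses only a crude $1/2$-additive gap. The $\kdim^{1-\hackpar}$ arises from replicating the gadget $M$ in $\tdim \approx \kdim^{(1-\alpha)/\alpha}$ diagonal blocks and running a pigeonhole argument (Lemma~\ref{lemma:sparse-estimator-with-high-probability}): since $\thetahat$ has total sparsity $\kdim$ split across $\tdim$ blocks, a large fraction of blocks must be individually $(\kdim/\tdim)$-sparse, and each such block inherits the hardness of a fresh copy of the base problem. Tuning $\alpha = \hackpar/8$ and the signal scale $\rdim$ gives the exponent. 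So the ``crux'' is the replication-plus-pigeonhole bookkeeping, not hardness of approximation, and attempting to derive a sharp inapproximability result would be both unnecessary and likely unavailable under only $\np \not\subset \ppoly$.
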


To understand the consequence of Theorem~\ref{ThmMain}, suppose that
we have chosen $\Lint$, $\pFuna$, $\pFunb$, $\pFunc$ sufficiently
large and have chosen $\hackpar$ very close to zero.  Under this
setting, Theorem~\ref{ThmMain}(b) shows that as long as the triplet
$(\numobs, \usedim, \kdim)$ is sufficiently large, then there is a
explicitly constructed design matrix $\Xmat$ such that any $(\Lint,
\pFunb,\pFunc)$-polynomial-efficient estimator has prediction risk
lower bounded by inequality~\eqref{eqn:main-theorem-lower-bound}.
Part (a) guarantees that the constructed design matrix $\Xmat$
satisfies the normalization~\eqref{EqnSuperAnnoy} and RE
conditions~\eqref{EqnDefnRE}, so that
Proposition~\ref{PropLassoThresh} can be applied to the thresholded
Lasso estimator for this instance of the sparse regression problem.
Since the parameter $\hackpar \in (0,1)$ may be chosen arbitrarily
close to zero, the lower bound~\eqref{eqn:main-theorem-lower-bound}
essentially matches the upper bound of
Proposition~\ref{PropLassoThresh}, thereby confirming that
Theorem~\ref{ThmMain} gives a tight lower bound.

Overall, Theorem~\ref{ThmMain} establishes that the inverse dependence
on the RE constant $\RECON(\Xmat)$ is unavoidable for the class of
polynomial-time algorithms.  In contrast, the $\ell_0$-based
estimator---a method that is not polynomially efficient---does
\emph{not} exhibit this dependence, as shown by
Proposition~\ref{PropThetaZero}. \\

It is worth contrasting Theorem~\ref{ThmMain} with past work on the
computational hardness of sparse linear systems.  As mentioned
previously, Natarajan~\cite{natarajan1995sparse} showed that finding
sparse solutions to linear systems is an NP-hard problem.  To contrast
with our result, this earlier result applies to the problem of solving
$\Xmat \theta = y$, where $\Xmat$ is the worst-case matrix selected by
an adversary who knows the algorithm in advance.  In contrast, in our
Theorem~\ref{ThmMain}, the design matrix $\Xmat$ is fixed ahead of
time, and the associated hardness result applies to all
polynomial-time algorithms.  Furthermore, Theorem~\ref{ThmMain}
requires $\Xmat$ to satisfy the normalization
condition~\eqref{EqnSuperAnnoy} as well as the RE
condition~\eqref{EqnDefnRE}, so that the lower bound is achievable by
the Lasso-based estimator in Proposition~\ref{PropLassoThresh}. These
two conditions are not satisfied in the earlier
construction~\cite{natarajan1995sparse}. Finally, in
Theorem~\ref{ThmMain} we have proved a lower bound of the order
$\frac{\sigma^2 \kdim\log \usedim}{\RECONSQ \, \numobs}$, which is
stronger than the $\frac{1}{\numobs}$ lower bound proved in the
original paper~\cite{natarajan1995sparse}.


\section{Proof of Theorem~\ref{ThmMain}}\label{sec:proof-of-main-theorem}

We now turn to the proof of our main result.  In broad outline, the
proof involves three main steps.  We begin in
Section~\ref{sec:construct-np-hard-problem} by constructing a
particular matrix $M$ for which the sparse linear regression problem
is NP-hard, doing so by comparison to the exact 3-set cover problem.
In Section~\ref{SecProbCase}, we then extend this worst-case hardness
result to the probabilistic setting that is of interest in
Theorem~\ref{ThmMain}.  Section~\ref{sec:define-distribution} is
devoted to the actual construction of the design matrix $\Xmat$, using
the matrix $M$ as a building block, and verifies that it satisfies the
conditions in part (a). Section~\ref{SecProofB} contains the proof of
part (b) of the theorem.  In all cases, we defer the proofs of more
technical lemmas to the appendices.


\subsection{An NP-Hard Problem}
\label{sec:construct-np-hard-problem}

In this section, we construct a linear system and prove that solving
it under a specific sparsity condition is NP-hard.  The NP-hardness is
establishing by reducing the exact 3-set cover
problem~\cite{natarajan1995sparse} to this linear regression problem.
The exact 3-set cover (X3C) problem is stated as follows: given an
integer $\mdim \geq 3$ divisible by three, a set $\SET =\{1,\dots,
\mdim\}$ and a collection $\COVER$ of 3-element subsets of $\SET$,
find $\mdim/3$ subsets in $\COVER$ that exactly cover $\SET$, assuming
such an exact cover exists.

Throughout our development, we make use of the convenient shorthand
$\pdim \defn {\mdim \choose 3}$, as well as $[\mdim] \defn \{1, 2,
\ldots, \mdim\}$ and $[\pdim] \defn \{1, 2, \ldots, \pdim\}$.  We
begin by constructing a matrix $M \in \real^{(\mdim + 3 \pdim) \times
  4 \pdim}$ in a blockwise manner, namely
\begin{align*}
M & \defn \begin{bmatrix} A & 0 \\ B & C \\
\end{bmatrix}
\end{align*}
where the submatrices have dimensions $A \in \R^{\mdim \times \pdim}$,
$B \in \R^{3\pdim \times \pdim}$ and $C \in R^{3 \pdim \times 3
  \pdim}$.

We define the submatrices by considering all possible subsets of the
form $(a,b,c) \in [\mdim]^3$, where the elements are all required to
be distinct.  Since $\pdim = {\mdim \choose 3}$, each such subset can
be labeled with a unique index $j \equiv j_{abc} \in [\pdim]$.  For
each $j \in [\pdim]$, this indexing can be used to define the $j^{th}$
column of the submatrix $A$ as follows:
\begin{align*}
  A_{aj} = A_{bj} = A_{cj} = 1 ~~~~\mbox{and}~~~~ A_{dj} =
  0~~\mbox{for all $d \in [\mdim] \backslash \{a,b,c\}$.}
\end{align*}
In other words, the $j^{th}$ column of $A$ is the binary indicator
vector for membership in the subset $\{a,b,c\}$ indexed by $j$.

The submatrices $B$ and $C$ are defined in terms of their rows.  For
each $j \in [\pdim]$, we define the following three rows of $B$
\begin{align*}
  B_{j} = e_j,~~B_{p+j}=e_j~~\mbox{and}~~B_{2 \pdim + j} = 0,
\end{align*}
where $e_j \in \real^{\pdim}$ is $j^{th}$ canonical basis vector
(i.e., with $j$-th entry equal to $1$ and all other entries set to
$0$).  Similarly, we define the rows of $C$ by
\begin{align*}
C_{j} = -f_j,~~C_{\pdim + j} = f_{p + j}~~\mbox{and}~~C_{2 \pdim + j}
= f_{2\pdim + j},
\end{align*}
where $f_j \in \real^{3 \pdim}$ is the $j^{th}$ canonical basis vector
in $\real^{3 \pdim}$.  We also define the set
\begin{align*}
\RESP & \defn \big \{ \resp \in \real^{\mdim + 3 \pdim} \, \mid \, \resp = M
\uvec~\mbox{for some $ \uvec \in \{0,1\}^{4p}\cap \Ball_0(m/3+p)$}
\big\}.
\end{align*}
The following lemma shows that solving the linear system $M \uvec =
\resp$ for all $\resp \in \RESP$ is NP-hard.

\begin{lemma}
\label{lemma:np-hardness-of-solving-linear-equation}
Given the matrix $M \in \real^{(\mdim + 3 \pdim) \times 4 \pdim}$ as
previously defined and a vector $\resp \in \RESP$, the problem of
computing a vector $\uvec \in \{0,1\}^{4 \pdim} \cap
\Ball_0(\frac{\mdim}{3} +\pdim)$ such that $\norm{M \uvec - \resp}_2 <
\frac{1}{2}$ is NP-hard.
\end{lemma}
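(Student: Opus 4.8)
The strategy is a reduction from exact 3-set cover (X3C). Given an X3C instance $(\mdim, \SET, \COVER)$, I will build the associated response vector $\resp$ and argue that any $\uvec \in \{0,1\}^{4\pdim}\cap\Ball_0(\mdim/3+\pdim)$ with $\|M\uvec-\resp\|_2<\tfrac12$ must in fact satisfy $M\uvec=\resp$ exactly (since $M$ has integer entries and $\uvec$ is $0/1$-valued, $M\uvec$ is an integer vector, so a Euclidean distance strictly less than $1/2$ to the integer vector $\resp$ forces equality), and then that the support of $\uvec$ restricted to the first $\pdim$ coordinates encodes an exact cover. Conversely, an exact cover yields such a $\uvec$. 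Since X3C is NP-hard, this establishes the claim.

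In more detail, the plan is as follows. First I fix the target: take $\resp = M\uvec^\ast$ where $\uvec^\ast$ is the ``all-ones on the $A$-block, suitably chosen on the $C$-block'' witness; concretely the first $\mdim$ coordinates of $\resp$ should be the all-ones vector $\mathbf 1_\mdim$ (so that the $A$-part of the equation, $A u_{[1:\pdim]} = \mathbf 1_\mdim$ over $u\in\{0,1\}$, says exactly that the chosen 3-subsets cover each element of $\SET$ exactly once, i.e.\ form an exact cover using exactly $\mdim/3$ subsets), and I will choose the last $3\pdim$ coordinates of $\resp$ so that the $B,C$ block equations are satisfiable. The key structural point to verify is the budget bookkeeping: the block $[B\ C]$ is designed so that for \emph{every} index $j\in[\pdim]$, exactly one of the three ``copies'' ($j$, $\pdim+j$, $2\pdim+j$) must be activated, which forces the $C$-part of $\uvec$ to have exactly $\pdim$ nonzeros and thereby pins the number of active $A$-columns to at most (and, via the $A$-equation $A u_{[1:\pdim]}=\mathbf 1_\mdim$, exactly) $\mdim/3$ within the sparsity budget $\mdim/3+\pdim$. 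I will check that the $B,C$ construction indeed enforces this ``pick exactly one of three'' behavior: the rows $B_j=e_j$, $B_{\pdim+j}=e_j$, $B_{2\pdim+j}=0$ together with $C_j=-f_j$, $C_{\pdim+j}=f_{\pdim+j}$, $C_{2\pdim+j}=f_{2\pdim+j}$ couple the $j$-th $A$-column indicator $u_j$ to the three auxiliary variables so that consistency of the $f_j$, $f_{\pdim+j}$, $f_{2\pdim+j}$ coordinates of $\resp$ leaves exactly one admissible configuration per $j$.

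The two directions then go through: (i) if an exact cover exists, the corresponding $\uvec$ lies in $\{0,1\}^{4\pdim}\cap\Ball_0(\mdim/3+\pdim)$ and achieves $M\uvec=\resp$, hence distance $0<\tfrac12$; (ii) if some algorithm returns $\uvec\in\{0,1\}^{4\pdim}\cap\Ball_0(\mdim/3+\pdim)$ with $\|M\uvec-\resp\|_2<\tfrac12$, then by integrality $M\uvec=\resp$, the $C$-block forces $\pdim$ nonzeros among the auxiliary coordinates, leaving at most $\mdim/3$ active $A$-columns, and $A u_{[1:\pdim]}=\mathbf 1_\mdim$ then says these $\le \mdim/3$ subsets cover $\SET$ with total multiplicity $\mdim$, which is possible only if they partition $\SET$ — an exact cover. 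Note that $\resp\in\RESP$ by construction (it is $M$ applied to a $0/1$ vector in the right $\ell_0$-ball), so the lemma's hypothesis $\resp\in\RESP$ is consistent with the reduction.

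The main obstacle I anticipate is the second half of step two: carefully verifying that the $[B\ C]$ block, with the specific placement of $e_j$'s and $\pm f_j$'s, genuinely enforces the ``exactly one of three copies, and it is tied to $u_j$'' constraint for all $j$ simultaneously, and that the induced count of active $A$-columns is forced to be \emph{at most} $\mdim/3$ under the global sparsity budget — this is where an off-by-one in the sparsity parameter or a missing coupling row would break the reduction. Everything else (integrality rounding from $\tfrac12$, the equivalence between $Au_{[1:\pdim]}=\mathbf 1_\mdim$ and exact covering, and invoking NP-hardness of X3C) is routine.
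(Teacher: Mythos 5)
Your integrality shortcut is a genuine and valid simplification: since $M$ has entries in $\{-1,0,1\}$, $\uvec \in \{0,1\}^{4\pdim}$, and $\resp \in \RESP$ is itself of the form $M\uvec^\ast$ for a $0/1$ vector, the residual $M\uvec - \resp$ is integer-valued, so $\|M\uvec - \resp\|_2 < \tfrac12$ forces $M\uvec = \resp$ exactly. The paper does \emph{not} use this — its Claims~\ref{eqn:exact-coverage-claim} and~\ref{eqn:subset-selection-consistency} are written to handle possibly non-integral solutions (deriving, e.g., $\uvec_i > 1/2$ rather than $\uvec_i = 1$), which is overkill for the lemma as stated. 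So up to this point your route is cleaner.

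However, there is a genuine gap in your plan: you treat the $[B\ C]$ block as pure budget bookkeeping and never say how the collection $\COVER$ is encoded into the last $3\pdim$ coordinates of $\resp$. This encoding is the heart of the reduction. The paper sets $\resp_{\mdim+j} = 0$, $\resp_{\mdim+\pdim+j} = 1$, $\resp_{\mdim+2\pdim+j} = 0$ when triple $j$ lies in $\COVER$, and $\resp_{\mdim+j} = 0$, $\resp_{\mdim+\pdim+j} = 0$, $\resp_{\mdim+2\pdim+j} = 1$ when it does not. With $\uvec \in \{0,1\}$ this yields, for $j \in \COVER$, the constraints $\uone = \vone$, $\uone + \wone = 1$, $\zone = 0$, which admit \emph{two} configurations $(1,1,0,0)$ and $(0,0,1,0)$ (select or don't select), and for $j \notin \COVER$ the constraints $\uone = \vone$, $\uone + \wone = 0$, $\zone = 1$, which admit \emph{one} configuration $(0,0,0,1)$ and in particular force $\uone = 0$. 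Your statement that the $B,C$ rows leave ``exactly one admissible configuration per $j$'' is therefore false for $j \in \COVER$ — and if it were true, the instance would have a unique solution determined by $\resp$ alone, defeating the purpose of the reduction. Correspondingly, your reverse direction only establishes that the active $A$-columns partition $\SET$, not that they are drawn from $\COVER$; it is precisely the forced $\uone = 0$ for $j \notin \COVER$ that rules out using unavailable triples, and your argument never invokes it. Without stating the $\COVER$-dependent choice of the last $3\pdim$ entries of $\resp$ and deriving the $\uone = 0$ consequence, the reduction proves hardness of a trivially solvable problem (partition $[\mdim]$ into arbitrary triples) rather than of X3C.
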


\begin{proof}
We reduce the X3C problem, specified by the set $\SET = \{1, \ldots,
\mdim\}$ and triplet collection $\COVER$ to the sparse linear
regression problem.  In particular, given $(\SET, \COVER)$, we use it
to construct a response vector $\resp \in \real^{\mdim + 3 \pdim}$, and then
consider the linear system $M \uvec = \resp$.  We then show that any
method that leads to a sparse vector $\uvec$ such that $\|M \uvec -
\resp\|_2 < \frac{1}{2}$ can also be used to solve the exact 3-set
cover problem.

\paragraph{Constructing a response vector from X3C:}

Given $(\SET, \COVER)$, we now construct the response \mbox{vector
  $\resp$.}  Recalling that $\resp \in \real^{\mdim + 3 \pdim}$, we let the
first $\mdim$ coordinates of the vector $\resp$ equal to $1$. Since
any triplet of distinct elements $(a,b,c) \in [\mdim]^3$ can be
associated with a unique index $j_{abc} \in [\pdim]$.  We use this
correspondence to define the remaining $3 \pdim$ entries of $\resp$ as
follows:
\begin{subequations}
\begin{align}
\label{eqn:response-def-1}
\mbox{If $(a,b,c) \in \COVER$:} & \qquad \mbox{set $\resp_{\mdim +
    j_{abc}} = 0$, $\resp_{\mdim + \pdim + j_{abc}}=1$ and $\resp_{\mdim + 2 \pdim +
    j_{abc}}=0$,} \\
\label{eqn:response-def-2}
\mbox{If $(a,b,c) \notin \COVER$:} & \qquad \mbox{set $\resp_{\mdim +
    j_{abc}} = 0$, $\resp_{\mdim+ \pdim + j_{abc}} = 0$, and $\resp_{\mdim + 2
    \pdim + j_{abc}} = 1$.}
\end{align}
\end{subequations}

Assuming the existence of an exact cover, we now show that $\resp \in
\RESP$, in particular by constructing a vector $u \in \ANNOY$ such
that $M u = \resp$. Given a fixed triplet $(a,b,c)$, let us introduce
the shorthand notation
\begin{align}
\label{eqn:subset-variable-shorthands}
 \uone \defn \uvec_{j_{abc}},~~ \vone \defn \uvec_{p+i_{abc}},~~\wone
 \defn \uvec_{2 \pdim +j_{abc}}~~\mbox{and}~~ \zone \defn \uvec_{3
   \pdim + j_{abc}}.
\end{align}
Observe that the linear equation $M \uvec= \resp$ holds if and only if the following
conditions hold:
\begin{subequations}
\begin{align}
\mbox{For $(a,b,c) \in \COVER$:}& \qquad \uone - \vone = 0,~~ \uone +
\wone = 1, \; \mbox{and $\zone = 0$.} \label{eqn:solution-condition-1} \\
\mbox{For $(a,b,c) \notin \COVER$:}& \qquad \uone- \vone = 0,~~ \uone
+ \wone = 0, \; \mbox{and $\zone = 1$.} \label{eqn:solution-condition-2}\\
\mbox{For any $i\in \SET$:}& \qquad \mbox{exactly one triplet $(a,b,c)$ satisfies } i\in\{a,b,c\} \mbox{ and } \uone = 1.\label{eqn:solution-condition-3}
\end{align}
\end{subequations}
Thus, it is sufficient to construct a vector $\uvec \in \ANNOY$ which
satisfies the above conditions.  If $(a,b,c)\notin \COVER$, then we let
$(\uone,\vone,\wone,\zone)=(0,0,0,1)$. If $(a,b,c)\in \COVER$ but it does
not belong to the exact cover, then we let
$(\uone,\vone,\wone,\zone)=(0,0,1,0)$. Otherwise, if $(a,b,c)\in \COVER$
and it is selected in the exact cover, then we let
$(\uone,\vone,\wone,\zone) = (1,1,0,0)$.  Given these specifications,
it is straightforward to verify that $\uvec \in \{0,1\}^{4p}$ and
exactly $m/3+p$ entries of $\uvec$ are equal to $1$. It is also easy
to verify that $\uvec$ satisfies
conditions~\eqref{eqn:solution-condition-1}-\eqref{eqn:solution-condition-3}.  Overall, we conclude that
$\resp \in \RESP$, and hence is a valid response vector for the sparse linear regression
problem

\paragraph{Solving X3C using a sparse linear system solver:}
Suppose that there is a sparse solution $\uvec \in \ANNOY$ which
satisfies $\norm{M \uvec-\resp}_2 < 1/2$, where the response vector
$\resp$ was previously defined in equations~\eqref{eqn:response-def-1}
and~\eqref{eqn:response-def-2}. We now use $\uvec$ to construct an
exact 3-set cover.  Recalling the
notation~\eqref{eqn:subset-variable-shorthands}, we observe that the
condition $\norm{M \uvec-\resp}_2 < 1/2$ leads to the following
restrictions:
\begin{align*}
\mbox{For $(a,b,c) \in \COVER$:} & \qquad |\uone - \vone| < 1/2,~~ |\uone +
\wone - 1| < 1/2, \; \mbox{and} \; |\zone| < 1/2.\\
\mbox{For $(a,b,c) \notin \COVER$:} & \qquad |\uone- \vone| < 1/2,~~
|\uone + \wone| < 1/2, \; \mbox{and} \; |\zone - 1| < 1/2.
\end{align*}
Then, we prove the following two claims:
\begin{claim}
\label{eqn:exact-coverage-claim}
 There are exactly $\mdim/3$ nonzero entries in $\{\uvec_1, \dots,
 \uvec_\pdim\}$. In addition, for any $i \in [\pdim]$ such that
 $\uvec_i > 0$, we must have $\uvec_i > 1/2$.
\end{claim}

\begin{proof}
For any triplet $(a,b,c)$, we claim that there is at least one
nonzero entry in $(\vone,\wone,\zone)$. Thus, the fact that $\uvec\in
\Ball_0(m/3+p)$ implies that there are at most $m/3$ nonzero entries
among $\{\uvec_1,\dots,\uvec_p\}$. To prove this claim, consider two
cases $(a,b,c)\notin \COVER$ and $(a,b,c)\in \COVER$. If $(a,b,c)\notin \COVER$, then
we have a restriction $|\zone-1| < 1/2$, which makes $\zone\neq 0$. If
$(a,b,c)\in \COVER$, then we have restrictions $|\uone-\vone| < 1/2$ and
$|\uone+\wone - 1| < 1/2$.  Thus,
\begin{align*}
  1 > |\uone-\vone| + |\uone+\wone-1| \ge |\vone+\wone-1|,
\end{align*}
which implies that either $\vone$ or $\wone$ must be nonzero.

On the other hand, we claim that the number of nonzero entries in
$\{\uvec_1,\dots,\uvec_p\}$ is at least $m/3$. Assume that there are
$k$ nonzero entries in $\{\uvec_1,\dots,\uvec_p\}$.
We notice that each nonzero entry in $\{\uvec_1,\dots,\uvec_p\}$
selects a column in matrix $A$ that has three nonzero coordinates.
Thus, there are at most $3k$ nonzero entries in $\{(M\uvec)_1,\dots,(M\uvec)_m\}$.
Since $|(M\uvec)_i - 1| < 1/2$ holds for all $1\leq i\leq m$, all elements
in $\{(M\uvec)_1,\dots,(M\uvec)_m\}$ must be greater than $1/2$. Thus,
we have
$3k\geq m$ which establishes the claim.

Hence, there are exactly $m/3$ nonzero entries in $\{\uvec_1,\dots,\uvec_p\}$.
Each nonzero entry covers exactly three coordinates of $\resp$.
Since all elements
in $\{(M\uvec)_1,\dots,(M\uvec)_m\}$ must be greater than $1/2$,
we conclude that the nonzero $\uvec_i$'s must be greater than $1/2$.
\end{proof}

\begin{claim}
\label{eqn:subset-selection-consistency}
For any triplet $(a,b,c) \notin \COVER$, we have $\uvec_{i_{abc}}=0$.
\end{claim}

\begin{proof}
We proceed via proof by contradiction.  Suppose that $(a,b,c) \notin \COVER$ and $\uvec_{i_{abc}} \neq 0$. By
Claim~\ref{eqn:exact-coverage-claim}, we have \mbox{$\uone =
  \uvec_{i_{abc}} > 1/2$.}  The condition $\norm{M\uvec-\resp}_2 <
1/2$ imposes the restrictions \mbox{$|\uone-\vone| < 1/2$} and
\mbox{$|\zone-1| < 1/2$,} which in turn imply that $\vone \neq 0$ and
$\zone\neq 0$. In the proof of Claim~\ref{eqn:exact-coverage-claim},
we have shown that for any triplet $(a,b,c)$ there is at least one
nonzero entry in $(\vone,\wone,\zone)$. For this specific triplet
$(a,b,c)$, there are at least two nonzero terms among
$(\vone,\wone,\zone)$.  Hence, the total number of nonzero entries in
$\{\uvec_1,\dots,\uvec_p\}$ is at most $m/3-1$, which contradicts
Claim~\ref{eqn:exact-coverage-claim}.
\end{proof}

According to Claim~\ref{eqn:subset-selection-consistency}, all nonzero
entries in $\{\uvec_1,\dots,\uvec_p\}$ correspond to 3-sets in
$\COVER$. According to Claim~\ref{eqn:exact-coverage-claim}, exactly
$m/3$ of these 3-sets are selected. The condition
$\norm{M\uvec-\resp}_2 < 1/2$ implies that all entries in
$\{(M\uvec)_1,\dots,(M\uvec)_m\}$ are nonzero, which means that $\SET$
is covered by the union of these 3-sets.  Thus, these $m/3$ 3-sets form an
exact cover \mbox{of $\SET$.}
\end{proof}


\subsection{Probabilistic Hardness}
\label{SecProbCase}

As in the previous section, we use the shorthand notation $\pdim =
{\mdim \choose 3}$ throughout.  In this section, we extend the result
of Section~\ref{sec:construct-np-hard-problem} to the probabilistic
case.  In order to do so, we require the following auxiliary lemma:
\begin{lemma}
\label{lemma:ppoly-consequence}
Let $\pfuna, \pfunb$ be arbitrary polynomial functions, let $\mdim_0
\in \PosInt$ be an arbitrary positive integer, and suppose that
Assumption~\ref{assumption:np-notin-ppoly} holds.  Then for any
NP-hard problem $\PROB$, there is an input length $\mdim > \mdim_0$
such that any algorithm that can be encoded in $\pfuna(\mdim)$ bits
and that terminates in $\pfunb(\mdim)$ time must fail on at least one
input with probability greater than $1/2$.
\end{lemma}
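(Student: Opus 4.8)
The plan is to argue by contradiction, converting a hypothetical family of small, fast, reliable randomized algorithms for $\PROB$ into a $\ppoly$ algorithm for $\PROB$; since $\PROB$ is $\np$-hard and $\ppoly$ is closed under polynomial-time reductions, this would force $\np \subseteq \ppoly$, contradicting Assumption~\ref{assumption:np-notin-ppoly}. Concretely, suppose the conclusion of the lemma is false. Then for \emph{every} input length $\mdim > \mdim_0$ there is a randomized algorithm $\mathcal{A}_\mdim$ that is encoded in at most $\pfuna(\mdim)$ bits, terminates within $\pfunb(\mdim)$ steps, and on every input $x$ of length $\mdim$ returns a correct answer with probability at least $1/2$.

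First I would boost the success probability to exponentially small error. Since $\PROB$ is an $\np$ search problem — its solutions can be verified in polynomial time, as is the case for the problem of Lemma~\ref{lemma:np-hardness-of-solving-linear-equation} — I define the amplified algorithm $\mathcal{A}'_\mdim$ to run $\mathcal{A}_\mdim$ independently $2\mdim$ times on $x$, verify each of the $2\mdim$ candidate outputs, and return any one that passes (declaring failure only if none does). Then on every length-$\mdim$ input $\mathcal{A}'_\mdim$ errs with probability at most $2^{-2\mdim}$; its code is that of $\mathcal{A}_\mdim$ together with a short wrapper and the fixed verifier, hence at most $\pfuna(\mdim) + O(1)$ bits; it runs in $\mathrm{poly}(\mdim)$ time; and it consumes at most $q(\mdim) := 2\mdim\,\pfunb(\mdim)$ random bits.

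Next is the Adleman-style derandomization by advice. By a union bound over the at most $2^\mdim$ inputs of length $\mdim$, a uniformly random $r \in \{0,1\}^{q(\mdim)}$ leaves $\mathcal{A}'_\mdim$ simultaneously correct on all length-$\mdim$ inputs with probability at least $1 - 2^\mdim \cdot 2^{-2\mdim} > 0$, so I fix one such string $r_\mdim$. I then exhibit a single uniform polynomial-time machine $U$ together with advice strings $\{a_\mdim\}$: for $\mdim > \mdim_0$ take $a_\mdim$ to be a self-delimiting concatenation of the encoding of $\mathcal{A}'_\mdim$ with the bits of $r_\mdim$, of total length $\pfuna(\mdim) + q(\mdim) + O(1) = \mathrm{poly}(\mdim)$; for the finitely many lengths $\mdim \le \mdim_0$ take $a_\mdim$ to be a hard-coded table of correct outputs, of size bounded by a constant depending only on $\mdim_0$. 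On input $x$ of length $\mdim$ with advice $a_\mdim$, the machine $U$ either reads the answer off the table (when $\mdim \le \mdim_0$) or parses $a_\mdim$ as a (program, random-tape) pair and simulates that program on $x$ with that tape; since simulating a $\pfunb(\mdim)$-step run of a $\pfuna(\mdim)$-bit program costs time polynomial in $\mdim$, the machine $U$ is polynomial-time. Hence $\PROB \in \ppoly$, and combined with $\np$-hardness of $\PROB$ and closure of $\ppoly$ under polynomial-time reductions this yields $\np \subseteq \ppoly$, contradicting Assumption~\ref{assumption:np-notin-ppoly}; the contradiction establishes the lemma.

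I expect the main obstacle to be the amplification step and its bookkeeping: amplification is precisely where verifiability of $\PROB$'s solutions is needed (a bare success probability of $1/2$ on a decision problem could not be boosted), and one must check carefully that both the \emph{input-length-dependent} code $\mathcal{A}'_\mdim$ and the derandomizing string $r_\mdim$ fit inside a single polynomial-length advice string handed to one fixed uniform polynomial-time machine — which is exactly what makes the resulting algorithm a legitimate member of $\ppoly$ rather than merely a nonuniform family without resource guarantees.
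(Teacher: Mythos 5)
Your proof takes essentially the same approach as the paper's Adleman-style argument: assume for contradiction that a small, fast, $\geq 1/2$-reliable algorithm exists at every length $\mdim > \mdim_0$, amplify by independent repetitions, union-bound over all $2^\mdim$ inputs to fix a single random string that works universally, and then hardwire the (program, random string) pair as advice, certifying $\PROB \in \ppoly$ and contradicting Assumption~\ref{assumption:np-notin-ppoly}. You are, if anything, more careful than the paper on the verifiability point: you explicitly observe that amplification requires recognizing a correct output (which holds for the search problem of Lemma~\ref{lemma:np-hardness-of-solving-linear-equation} since a candidate $\uvec$ can be checked in polynomial time), whereas the paper's proof simply asserts that $\mdim+1$ executions ``solve the problem'' without spelling out how the advice-taking machine identifies the successful run.
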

\noindent See Appendix~\ref{AppPpolyConsequence} for the proof.\\

\noindent We are now ready to state and prove the main result of this
section:

\begin{lemma}
\label{lemma:probablistic-hardness}
Let $\pfuna, \pfunb$ and $\pfunc$ be arbitrary polynomial functions
defined on $\PosInt \defn \{1, 2, \ldots \}$.  Then for any $\mdimzero
\in \PosInt$, there is an integer $\mdim > \mdimzero$ and a distribution
$\qprob$ over $\uvec \in \{0,1\}^{4 \pdim} \cap
\Ball_0(\frac{\mdim}{3} + \pdim)$ such that for any solver $\uhat$
that can be encoded in $\pfuna(\mdim)$ bits and terminates in
$\pfunb(\mdim)$ time, we have
\begin{align}
\label{EqnHbound}
\qprob \Big[ \norm{M \uhat -M \uvec}_2 < \frac{1}{2} \Big] & \leq
\frac{1}{\pfunc(\mdim)}.
\end{align}
\end{lemma}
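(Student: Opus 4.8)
The plan is to convert the worst-case NP-hardness of Lemma~\ref{lemma:np-hardness-of-solving-linear-equation} into an average-case statement by the standard nonuniform-to-uniform argument packaged in Lemma~\ref{lemma:ppoly-consequence}, and then amplify the resulting constant failure probability up to $1 - 1/\pfunc(\mdim)$. The quantity $\mdim$ (hence $\pdim = \binom{\mdim}{3}$) is ours to choose large, which is exactly the flexibility the two auxiliary lemmas give us. The distribution $\qprob$ we will produce is \emph{not} itself a single hard input; rather, it is an appropriately weighted mixture over a family of instances of the X3C-derived linear system, designed so that no small-and-fast solver can succeed on a random draw with probability better than $1/\pfunc(\mdim)$.

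First I would set up the right problem $\PROB$ to feed into Lemma~\ref{lemma:ppoly-consequence}. Take $\PROB$ to be the decision/search problem ``given $\resp \in \RESP$ for the matrix $M = M_\mdim$, find $\uvec \in \{0,1\}^{4\pdim} \cap \Ball_0(\mdim/3 + \pdim)$ with $\|M\uvec - \resp\|_2 < 1/2$,'' which is NP-hard by Lemma~\ref{lemma:np-hardness-of-solving-linear-equation}. Note that the input $\resp$ has a clean combinatorial encoding of length polynomial in $\mdim$: it is determined by the subfamily $\COVER$ of $3$-subsets, so there are at most $2^\pdim$ possible response vectors, and $\pdim \le \mdim^3$. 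Applying Lemma~\ref{lemma:ppoly-consequence} with the polynomials $\pfuna$, $\pfunb$ (the code-length and runtime budgets handed to us) and starting integer $\mdimzero$, we obtain some $\mdim > \mdimzero$ such that every algorithm encodable in $\pfuna(\mdim)$ bits and running in $\pfunb(\mdim)$ time fails on at least one valid input $\resp^\ast$ with probability $> 1/2$ over its internal randomness. If the solver is deterministic this just says it fails on some fixed $\resp^\ast$; in general it fails on $\resp^\ast$ with probability $> 1/2$.

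The remaining work is amplification: a single bad input gives failure probability $1/2$, but we need $1/\pfunc(\mdim)$, i.e. failure probability $1 - 1/\pfunc(\mdim)$. The idea is to run the reduction at a larger scale. Let $t = t(\mdim)$ be a polynomially bounded repetition count, chosen so that $(1/2)^t \le 1/\pfunc(\mdim')$ for the final parameter $\mdim'$; concretely $t = \lceil \log_2 \pfunc(\mdim') \rceil$ suffices, and this is polynomial in $\log \mdim'$, hence harmless. One realizes this by applying Lemma~\ref{lemma:ppoly-consequence} not to $\PROB$ but to its $t$-fold parallel version $\PROB^{\otimes t}$ (solve $t$ independent X3C-linear-systems simultaneously), which is still NP-hard and whose instances still have polynomial encoding length; a solver for $\PROB^{\otimes t}$ that fails on all $t$ coordinates with probability $>1/2$ translates, by independence of the coordinates under the mixture, into per-coordinate success probability at most $1 - (1/2)^{1/t}$ — alternatively, and more cleanly, one defines $\qprob$ directly as the mixture that puts mass proportional to the failure event on the $t$-fold product instance and reads off the bound. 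Either way, after fixing the hard $t$-fold input and taking $\qprob$ to be (say) the uniform distribution over the preimages $\uvec$ of its response-vector coordinates under $M$ — which lies in $\{0,1\}^{4\pdim'} \cap \Ball_0(\mdim'/3 + \pdim')$ by the construction in Lemma~\ref{lemma:np-hardness-of-solving-linear-equation} — the probability that $\uhat$ produces a vector with $\|M\uhat - M\uvec\|_2 < 1/2$ is at most $1/\pfunc(\mdim')$, and we rename $\mdim' \to \mdim$.

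The main obstacle, and the step that needs care rather than cleverness, is the bookkeeping of the polynomial budgets: one must check that running Lemma~\ref{lemma:ppoly-consequence} at the inflated scale (the $t$-fold product, larger $\mdim$) is still consistent with the \emph{original} polynomials $\pfuna, \pfunb, \pfunc$ — i.e. that an algorithm of size $\pfuna(\mdim)$ and time $\pfunb(\mdim)$ for the product problem can be simulated within the size/time budgets that Lemma~\ref{lemma:ppoly-consequence} consumes, after absorbing the $t = \mathrm{polylog}$ overhead. This is a routine but essential composition-of-polynomials check. A secondary subtlety is making sure the randomness in Lemma~\ref{lemma:ppoly-consequence} ($>1/2$ failure ``with probability'' — over the \emph{choice of input} in a nonuniform family, or over the algorithm's coins) is matched to the randomness over $\uvec$ claimed in~\eqref{EqnHbound}; the clean way is to have $\qprob$ be exactly the input distribution that witnesses the failure in (the product version of) Lemma~\ref{lemma:ppoly-consequence}, pushed forward through the $\COVER \mapsto \resp \mapsto \uvec$ construction, so that the two probabilities are literally the same object.
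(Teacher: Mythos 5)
There is a genuine gap, and it sits precisely where the proposal waves its hands. The paper's Lemma~\ref{lemma:probablistic-hardness} asks for a \emph{single} distribution $\qprob$ that is simultaneously hard for \emph{every} solver of the given size and time budget, with failure probability as high as $1 - 1/\pfunc(\mdim)$. Lemma~\ref{lemma:ppoly-consequence} only gives you, for each solver, \emph{some} worst-case input on which that solver fails with probability $> 1/2$; the input depends on the solver, and the $1/2$ is fixed. Passing from ``per-solver worst-case input'' to ``one distribution hard for all solvers'' is a Yao-minimax / hard-core-set step, and your proposal never actually performs it. The paper does it by contradiction: assume every distribution admits a solver with success probability $> 1/\pfunc(\mdim)$, then iteratively build $\mprob_1, \mprob_2, \ldots$ uniform over the shrinking ``hard core'' $\Sset_t$ where all previous solvers fail, and cascade the solvers $\uhat_1,\ldots,\uhat_T$ into a single combined solver $\ustar_T$. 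Since each iteration kills at least a $1/\pfunc(\mdim)$ fraction, choosing $T = \mathcal{O}(\pdim \, \pfunc(\mdim))$ makes $\ustar_T$ succeed on every $\uvec$ with probability $> 1/2$, while still being encodable in polynomially many bits and running in polynomial time; this contradicts Lemma~\ref{lemma:ppoly-consequence} applied to the NP-hard problem of Lemma~\ref{lemma:np-hardness-of-solving-linear-equation}.

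Your parallel-repetition amplification is also pointed the wrong way. If a solver for $\PROB^{\otimes t}$ fails on some $t$-tuple input with probability $> 1/2$, and one optimistically assumes the coordinates behave independently, one only deduces a \emph{per-coordinate} failure probability of at least $1 - (1/2)^{1/t}$, which \emph{decreases} to $0$ as $t$ grows --- the opposite of the $1 - 1/\pfunc(\mdim)$ you need. Parallel repetition drives down the success probability of solving \emph{all} coordinates, but here you start from a bound on the product and want to conclude a strong per-coordinate bound; that implication simply does not hold. The clause ``one defines $\qprob$ directly as the mixture that puts mass proportional to the failure event'' is exactly the minimax/hard-core content of the lemma and would itself require an argument of the cascading type the paper gives. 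So the proposal both misapplies amplification and omits the central construction; neither the choice of $\qprob$ nor the bound~\eqref{EqnHbound} is actually established.
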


\begin{proof}
We proceed via proof by contradiction.  In particular, for a given
$\mdimzero$, suppose that for every $\mdim > \mdimzero$ and for any
distribution over $\{0,1\}^{4p}\cap \Ball_0(m/3+p)$, there is a solver
$\uhat$ that can be encoded in $\pfuna(\mdim)$ bits and terminates in
$\pfunb(\mdim)$ time, and such that inequality~\eqref{EqnHbound} is
violated.

For a positive integer $\MYINT$ to be specified, we now construct a
sequence of distributions $\{\mprob_{t}\}_{t=1}^\MYINT$ that lead to a
contradiction.  Let $\mprob_1$ the uniform distribution over $\Sset_1
\defn \{0,1\}^{4 \pdim }\cap \Ball_0(\frac{\mdim}{3} + \pdim)$ and let
$\uhat_1$ be the solver which achieves $\mprob_1 \big[\norm{M \uhat_1
    -M \uvec}_2 < \frac{1}{2} \big] > \frac{1}{\pfunc(\mdim)}$.  For
$t \geq 1$, we define the set $\Sset_{t+1}$ recursively as
\begin{align*}
\Sset_{t+1} & = \Sset_{t} \cap \big \{{\uvec} \in \Sset_0 \, \mid \,
\norm{M \uhat_t -M \uvec}_2 \geq \frac{1}{2} \big \}.
\end{align*}
When $\Sset_{t+1}$ is non-empty, we let $\mprob_{t+1}$ be the uniform
distribution over $\Sset_{t+1}$, and let $\uhat_{t+1}$ denote the
solver that satisfies the bound $\mprob_{t+1} \big [\norm{M \uhat-M
    \uvec}_2 < \frac{1}{2} \big] > \frac{1}{\pfunc(\mdim)}$.  If
$\Sset_{t+1}$ is empty, then we simply set $\uhat_{t+1} = \uhat_t$.

For any integer $\MYINT \geq 1$, this construction yields a sequence
of solvers $\{\uhat_t\}_{t=1}^\MYINT$, and we use them to define a
combined solver $\ustar_\MYINT$ as follows:
\begin{align}
\label{eqn:construct-cascade-search}
\ustar_\MYINT & = \begin{cases} \uhat_t & \mbox{for first $t \in \{1,
    \ldots, \MYINT \}$ such that $\norm{M \uhat_t - M \uvec} <
    \frac{1}{2}$ } \\ 0 & \mbox{otherwise.}
\end{cases}
\end{align}
If $\Sset_t$ is empty for some $t \in [\MYINT]$, then $\|M
\ustar_\MYINT - M u \|_2 < 1/2$ for all $u \in \Sset_0$.  Otherwise,
we may assume that $\Sset_t$ is non-empty for each $t \in [\MYINT]$.
Since $\Sset_t$ is the set of vectors $\uvec$ for which all the
solvers $\{\uhat_1, \ldots, \uhat_{t-1} \}$ fail, and $\mprob_t$ is
the uniform distribution over $\Sset_t$, the chance of success for
every solver $\uhat_t$ in the
construction~\eqref{eqn:construct-cascade-search} is at least
$\frac{1}{\pfunc(\mdim)}$. Consequently, if we consider the chance of
success for the solver $\ustar_\MYINT$ under the uniform distribution
over $\Sset_1$, we have
\begin{align}
\label{EqnBearCreekSpire}
\mprob_1 \big[\norm{M \ustar_\MYINT -M \uvec}_2 < \frac{1}{2} \big] &
\stackrel{(i)}{\geq} 1- \big(1- \frac{1}{\pfunc(\mdim)} \big)^\MYINT
\; \stackrel{(ii)}{\geq} 1-(1/2)^{4p+1},
\end{align}
where inequality (ii) follows by choosing $\MYINT \defn \lceil
\frac{-(4\pdim + 1)}{\log_2(1- \frac{1}{\pfunc(\mdim)})} \rceil$.
Since the distribution $\mprob_1$ is uniform over a support of at most
$2^{4\pdim}$ points, every point has probability mass no less than
$(1/2)^{4 \pdim}$.  This fact, combined with the lower
bound~\eqref{EqnBearCreekSpire}, implies that for any $\uvec \in
\Sset_1$ the solver $\ustar_\MYINT$ satisfies the bound $\norm{M
  \ustar_\MYINT -M \uvec}_2 < 1/2$ with probability greater than
$1/2$.

Note that the solver $\ustar_\MYINT$ can be encoded in $\MYINT \,
\pfuna(\mdim)$ bits and it terminates in $\MYINT \, \pfunb(\mdim)$ time,
both are polynomial functions of $\mdim$.  According to
Lemma~\ref{lemma:ppoly-consequence}
and~Lemma~\ref{lemma:np-hardness-of-solving-linear-equation}, there is
an integer $\mdim > \mdim_0$ such that on at least one input $\uvec$,
the solver $\ustar_\MYINT$ fails to achieve $\norm{M \ustar_\MYINT -M \uvec }_2
< 1/2$ with probability $1/2$.  However, this contradicts the previous
conclusion, which implies that our starting assumption was incorrect.
\end{proof}


\subsection{Proof of part (a)}
\label{sec:define-distribution}

We now turn to the construction of the design matrix $\Xmat$ specified
in Theorem~\ref{ThmMain}.  Lemma~\ref{lemma:probablistic-hardness}
implies that given arbitrary polynomial functions $\pfuna, \pfunb$ and
$\pfunc$, there is a positive integer $\mdim > \mdim_0$ (where
$\mdim_0$ can be arbitrarily large), a matrix $M \in \R^{(\mdim + 3
  \pdim )\times 4\pdim}$ with $\pdim = {\mdim \choose 3}$, and a
distribution $\qprob$ over the set of binary vectors \mbox{$\uvec \in
  \{0,1\}^{4\pdim} \cap \Ball_0( \frac{\mdim}{3} + \pdim)$} such that
solving for $\uvec$ based on observing $(M,M \uvec)$ is hard in the
average case. We leave $\pfuna, \pfunb$, $\pfunc$ and $\mdim_0$ to be
specified later, assuming that they are sufficiently large.  Based on
integer $\mdim$ and for a second positive integer $\tdim$ to be
chosen, we define the sparsity level
\begin{align*}
\kdim & \defn \tdim \, \Big ( \frac{\mdim}{3} + \pdim \Big) \: = \:
\tdim \, \Big (\frac{\mdim}{3} + {\mdim \choose 3} \Big).
\end{align*}
Using the matrix $M$ as a building block, we construct a design matrix
$\Xmat \in \R^{\numobs\times \usedim}$ that satisfies the conditions of
Theorem~\ref{ThmMain}.  To this end, our first step is to construct a
matrix $A_k$.  Recall the matrix $M \in \R^{(\mdim + 3 \pdim )\times
  4\pdim}$ constructed in Section~\ref{sec:construct-np-hard-problem}.
For the given integer $\tdim > 1$, we consider the rescaled matrix
$\sqrt{\tdim} M$ and replicate this matrix $\tdim$ times in diagonal
blocks to build the following matrix:
\begin{align*}
  \Akmat \defn \mbox{blkdiag} \underbrace{\big \{ \sqrt{\tdim} M,
    \sqrt{\tdim} M, \ldots, \sqrt{\tdim} M \big \}}_{\mbox{$\tdim$
      copies}} \in \real^{3 \kdim \times 4 \pdim \tdim}.
\end{align*}
By construction, the matrix $\Akmat$ has $3k = \tdim (\mdim + 3
\pdim)$ rows and $4 \pdim \tdim$ columns.  Since $4 \pdim \, \tdim
\leq 4 \kdim \leq \usedim$, the matrix
\begin{align*}
\Bkmat & \defn \frac{1}{2} \, \begin{bmatrix} \Akmat & 0_{3
    \kdim \times (\usedim - 4 \pdim \tdim)}
\end{bmatrix} \qquad \mbox{has dimensions $3 \kdim \times \usedim$.}
\end{align*}

We now use $\Bkmat$ to construct the design matrix $\Xmat \in
\real^{\numobs \times \usedim}$.  (To simplify the argument, we assume
that $\numobs$ is divisible by $6 \kdim$; when this condition does not
hold, an obvious modification of the argument yields the same result
modulo different constants.) Assume that $R\in \R^{(\numobs/2)\times
  \usedim}$ is a random Gaussian matrix whose rows are sampled
i.i.d.~from the Gaussian distribution $N(0,I_{\usedim \times
  \usedim})$. We define a parameterized family of random matrices $ \{
\Cmat_x\in \R^{\numobs\times \usedim}, x \geq 0 \}$ such that:
\begin{itemize}
\item its top $\numobs/2$ rows consist of $\frac{\numobs}{6 \kdim}$
  copies of the matrix $\Bkmat$.
\item for each $x \geq 0$, the bottom $\numobs/2$ rows of $\Cmat_x$
  are equal to $x R$.
\end{itemize}
Given a matrix $C \in \R^{\numobs\times\usedim}$, let $\Cmatup$ and
$\Cmatdown$ represent its top $\frac{\numobs}{2}$ rows and the bottom
$\frac{\numobs}{2}$ rows, respectively. In addition, we use
$\floor{C}_\ldim$ to represent the matrix formed by quantizing every
entry of $C$ by the operator $\floor{\cdot}_\ldim$. In addition, we
define $\supersmall \defeq 2^{-\pFunb(\numobs,\usedim,\kdim)}$ as a
shorthand quantity and note that $\RECON\in [\supersmall,
  \frac{1}{24\sqrt{2}})$.  The following lemma shows that there exists
  a particular value of $x$ such that the matrix $\Cmat_x$ has nice
  properties.

\begin{lemma}
\label{lemma:existence-of-nice-matrix}
For any integer $\ldim \geq
\max\{\log(12\sqrt{\usedim}),\log(\sqrt{\numobs\usedim}/\supersmall)\}$,
there is a realization of the random matrix $R$ and a particular value
of $x$ such that the matrix $\floor{\Cmat_x}_\ldim$ has RE constant
$|\RECON(\floor{\Cmat_x}_\ldim) - \RECON| \leq \supersmall$, and
satisfies the upper bounds
\begin{align}
\label{eqn:design-matrix-normalization-property}
  \frac{\|\floor{\Cmat_x}_\ldim \theta\|_2}{\sqrt{\numobs}} \leq
  \|\theta\|_2 \quad \mbox{and} \quad \frac{\|\floor{\Cxdown}_\ldim
    \theta\|_2}{\sqrt{\numobs}} \leq 25\RECON \|\theta\|_2 \quad
  \mbox{for all $\theta \in \Ball_0(2 \kdim)$}.
\end{align}
\end{lemma}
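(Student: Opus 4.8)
The plan is to construct the desired matrix by choosing $x$ carefully so that the bottom half $xR$ contributes just enough curvature to push the restricted eigenvalue of the whole matrix up to (approximately) the target value $\RECON$, while keeping all the relevant quantities controlled. First I would record the deterministic contribution of the top block: by construction $\Cmatup$ consists of $\frac{\numobs}{6\kdim}$ copies of $\Bkmat = \frac12[\Akmat\;\; 0]$, and $\Akmat$ is a block-diagonal arrangement of $\tdim$ copies of $\sqrt{\tdim}M$. A direct computation shows $\frac{1}{\numobs}\|\Cmatup\theta\|_2^2 = \frac{1}{12\kdim}\|\Akmat\theta\|_2^2$, and since each block of $\Akmat$ is $\sqrt{\tdim}M$ acting on a disjoint coordinate block, this equals $\frac{\tdim}{12\kdim}\sum_{b}\|M\theta^{(b)}\|_2^2$ where $\theta^{(b)}$ are the coordinate blocks. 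Using $\kdim = \tdim(\mdim/3+\pdim)$ and a bound $\|M\|_{\mathrm{op}}^2 \le c(\mdim+\pdim)$ on the operator norm of $M$, one gets that $\frac{1}{\numobs}\|\Cmatup\theta\|_2^2 \le c'\|\theta\|_2^2$ for a constant $c'$ that I will arrange (by the construction of $M$) to be a small absolute constant — this is what lets the first normalization inequality in~\eqref{eqn:design-matrix-normalization-property} hold. Crucially, $\Cmatup$ is a \emph{rank-deficient} matrix: it annihilates the sparse ``difference'' vectors coming from the X3C construction, which is exactly why the top block alone has a tiny RE constant.

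Next I would analyze the random bottom block. Since $R$ has i.i.d.\ $N(0,I)$ rows and $\numobs/2 \gtrsim \kdim\log\usedim$, standard Gaussian concentration (e.g.\ the results underlying Proposition~\ref{PropLassoThresh}, or a net argument over the union of cones $\bigcup_{|\PlainSset|=\kdim}\ConeSet(\PlainSset)$) shows that with high probability $\frac{1}{\numobs}\|R\theta\|_2^2 \in [\frac13\|\theta\|_2^2,\; 3\|\theta\|_2^2]$ simultaneously for all $\theta$ in that union (and in $\Ball_0(2\kdim)$), once $\numobs \ge \UNICON_5\kdim\log\usedim$ for a suitable universal constant. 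Fix one such realization of $R$. Then for the matrix $\Cmat_x$ we have, for every $\theta$ in the relevant cone,
\begin{align*}
\frac{1}{\numobs}\|\Cmat_x\theta\|_2^2 \;=\; \frac{1}{\numobs}\|\Cmatup\theta\|_2^2 + x^2\,\frac{1}{\numobs}\|R\theta\|_2^2,
\end{align*}
so the RE constant of $\Cmat_x$ lies between $x^2/3$ and (something like) $c' + 3x^2$. Since $\frac{1}{\numobs}\|R\theta\|_2^2$ is a fixed continuous (indeed, piecewise-quadratic) function of $\theta$ on the compact set of unit vectors in the cone, the map $x \mapsto \RECON(\Cmat_x)$ is continuous, equals $0$ (well, equals the RE constant of $\Cmatup$, which I can make $0$ or essentially $0$) at $x=0$, and grows without bound; hence by the intermediate value theorem there is a value $x^\ast$ with $\RECON(\Cmat_{x^\ast})$ equal to any target in a suitable range, in particular equal to $\RECON + \frac{\supersmall}{2}$. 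For this $x^\ast$ I would also check the two normalization bounds: $\frac{1}{\numobs}\|\Cmat_{x^\ast}\theta\|_2^2 \le (c' + 3(x^\ast)^2)\|\theta\|_2^2$, and since $\RECON < \frac{1}{24\sqrt2}$ forces $x^\ast$ to be small, this is $\le \|\theta\|_2^2$; and $\frac{1}{\numobs}\|\Cxdown\theta\|_2^2 = (x^\ast)^2\frac{1}{\numobs}\|R\theta\|_2^2 \le 3(x^\ast)^2\|\theta\|_2^2$, which I bound by $25\RECON\|\theta\|_2^2$ using $(x^\ast)^2 \le 3\RECON$ (from $\RECON(\Cmat_{x^\ast}) \ge (x^\ast)^2/3$) with room to spare.

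The final step is to pass from $\Cmat_{x^\ast}$ to its quantization $\floor{\Cmat_{x^\ast}}_\ldim$. The quantization perturbs each entry by at most $2^{-\ldim}$, so $\|\floor{\Cmat_{x^\ast}}_\ldim - \Cmat_{x^\ast}\|_{\mathrm{op}} \le \sqrt{\numobs\usedim}\,2^{-\ldim}$, which by the hypothesis $\ldim \ge \log(\sqrt{\numobs\usedim}/\supersmall)$ is at most $\supersmall$; a matching bound on $\|\cdot\|_{\mathrm{op}}$ restricted to $2\kdim$-sparse vectors follows similarly (or one uses $\ldim \ge \log(12\sqrt{\usedim})$ to keep the per-row $\ell_2$-perturbation tiny). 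A perturbation of operator norm $\le \supersmall$ changes $\frac{1}{\numobs}\|\cdot\theta\|_2^2$ by at most $O(\supersmall)\|\theta\|_2^2$ uniformly, hence shifts the RE constant by at most (something bounded by) $\supersmall$ — so after rechoosing constants in the target value one gets $|\RECON(\floor{\Cmat_{x^\ast}}_\ldim) - \RECON| \le \supersmall$ — and it preserves the two normalization inequalities up to absorbing the slack (the factor $25$ versus $3(x^\ast)^2 \le 9\RECON$ leaves plenty of margin for an $O(\supersmall)$ correction, recalling $\RECON \ge \supersmall$). I expect the main obstacle to be the \emph{uniform} control over the infinite union of cones $\bigcup_{|\PlainSset|=\kdim}\ConeSet(\PlainSset)$ — both establishing the Gaussian lower/upper isometry for the bottom block on this set with only $\numobs \gtrsim \kdim\log\usedim$ samples, and arguing continuity of $x\mapsto\RECON(\Cmat_x)$ carefully given that the infimum defining $\RECON$ is over a non-compact cone (one reduces to the compact intersection with the unit sphere, and uses that $\ConeSet(\PlainSset)$ is closed); the quantization and normalization bookkeeping, while tedious, is routine.
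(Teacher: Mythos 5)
Your proposal follows essentially the same route as the paper: decompose $\Cmat_x$ into a deterministic top block (singular values of $M$, hence of $\Akmat$ and $\Bkmat$, controlled by hand) and a Gaussian bottom block $xR$ with RIP-type concentration over $\Ball_0(2\kdim)$ and the union of cones, fix a good realization of $R$, use $\RECON(\Cmat_0)=0$ plus $1$-Lipschitz continuity of $X\mapsto\RECON(X)$ in operator norm together with the intermediate value theorem to tune $x$, and finally absorb the $2^{-\ldim}\sqrt{\numobs\usedim}$ quantization error. Your rank-deficiency remark for $\RECON(\Cmat_0)=0$ and your ``main obstacle'' (uniform Gaussian isometry over the cone union) are both handled by the citations the paper leans on, so those are fine.

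There is, however, a concrete gap in your verification of the second normalization inequality. From the IVT you derive $(x^\ast)^2 \leq 3\RECON$, i.e.\ $x^\ast \lesssim \sqrt{\RECON}$, and then bound $\frac{1}{\numobs}\|\Cxdown\theta\|_2^2 \leq 3(x^\ast)^2\|\theta\|_2^2 \leq 9\RECON\|\theta\|_2^2$. But the lemma asserts the \emph{unsquared} bound $\frac{\|\floor{\Cxdown}_\ldim\theta\|_2}{\sqrt{\numobs}} \leq 25\RECON\|\theta\|_2$, which after squaring requires $\frac{1}{\numobs}\|\Cxdown\theta\|_2^2 \leq 625\RECON^2\|\theta\|_2^2$. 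For small $\RECON$ (and the theorem explicitly allows $\RECON$ as small as $\supersmall = 2^{-\pFunb(\numobs,\usedim,\kdim)}$), the bound $9\RECON$ you obtain is \emph{much larger} than $625\RECON^2$, so your estimate does not imply the claim. The underlying issue is that you need $x^\ast \lesssim \RECON$ (linear), not $x^\ast\lesssim\sqrt{\RECON}$, and deriving an upper bound on $x^\ast$ from the RE constraint alone cannot give you that. The paper avoids this by \emph{first} fixing $\tau = 8\sqrt{2}\RECON$ (linear in $\RECON$), verifying $\RECON(\Cmat_\tau)\geq\RECON$ and $\frac{\|\Ctaudown\theta\|_2}{\sqrt{\numobs}}\leq 24\RECON\|\theta\|_2$ directly at that endpoint, and only then invoking the IVT to find $\tau'\in(0,\tau]$ with $\RECON(\Cmat_{\tau'})=\RECON$ exactly; monotonicity of $\|\Cmat_x^{\downarrow}\theta\|_2$ in $x$ then transfers the bottom-block bound from $\tau$ to $\tau'$ for free. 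You would need to restructure your argument to pin $x^\ast$ from above by an explicit linear-in-$\RECON$ choice of endpoint, rather than trying to recover a bound on $x^\ast$ after the fact from the identity $\RECON(\Cmat_{x^\ast})=\RECON$.
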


According to Lemma~\ref{lemma:existence-of-nice-matrix}, if we choose
an integer $\ldim \geq \max
\{\log(12\sqrt{\usedim}),\log(\sqrt{\numobs\usedim}/\supersmall)\}$
and define the design matrix $\Xmat \defeq \floor{\Cmat_x}_\ldim$,
then the matrix $\Xmat$ satisfies part (a) of the theorem. We leave
the concrete value of $\ldim$ to be specified later, assuming now that
it is sufficiently large.


\subsection{Proof of part (b)}
\label{SecProofB}

We begin with some notation for this section.  Let $\Xup$ and $\Xdown$
represent the top $\frac{\numobs}{2}$ rows and the bottom
$\frac{\numobs}{2}$ rows of $\Xmat$, respectively.  Given a vector
$\theta \in \R^\usedim$, we take its first $4 \pdim \tdim$ coordinates
and partition them evenly into $\tdim$ segments, each of length $4
\pdim$. For $i \in [\tdim]$, we use $\INDEXER{\theta}{i} \in \real^{4
  \pdim}$ to denote the $i^{th}$ segment, corresponding to the
coordinates indexed $4 \pdim (i-1)+1$ to $4\pdim i$.  Using this
notation, the proof involves the subvectors $\{\INDEXER{\thetahat}{i},
\; i \in [\tdim]\}$ and $\{\INDEXER{\theta}{i}^*, \; i \in [\tdim]\}$
defined by $\thetahat$ and $\thetastar$, respectively.

Lemma~\ref{lemma:probablistic-hardness} guarantees the existence of a
distribution $\qprob$ over $\ANNOY$ such that, if $\uvec^*$ is sampled
from $\qprob$, then solving $M u = M\uvec^*$ is computationally hard.
Now define the quantity
\begin{align}
\label{EqnLchoice}
\ldim \defeq \left \lceil \max\left\{\log(12
\sqrt{\usedim}),\log(\sqrt{\numobs \usedim} / \supersmall), \log(2
\sqrt{\kdim / \tdim}/\rdim)\right\} \right \rceil \quad \mbox{and}
\quad \rho \defn \left \lfloor \frac{\rdim}{\sqrt{\mdim/3 + \pdim}}
\right \rfloor_L,
\end{align}
where $\rdim > 0$ is a constant to be specified later. Suppose that
each subvector $\INDEXER{\theta}{i}^*/\rho$ is independently drawn
from the distribution $\qprob$---to be precise, a subvector is drawn
from the distribution $\qprob$, and is rescaled by a factor of $\rho$
to obtain $\theta_i^*$.  Note that
\begin{align*}
\max_{\thetastar \in \Ball_0(\kdim)} \E_{\thetastar} \Big[ \|\Xmat
  \thetahat(\Xmat, y) - \Xmat \thetastar\|_2^2 \Big] & \geq
\E_\qprob \Big[ \|\Xmat \thetahat(\Xmat, y) - X\thetastar\|_2^2 \Big],
\end{align*}
so that it suffices to lower bound the right-hand side.

Denote by $\pone$ the problem defined in
Lemma~\ref{lemma:probablistic-hardness}, and denote by $\ptwo$ an
instance of the linear regression problem with fixed design matrix
$X$.  Our reduction from $\pone$ to $\ptwo$ consists of two
main steps:

\begin{itemize}
\item We first define an intermediate problem \pprimetwo~that is close
  to but slightly different than our regression problem $\ptwo$.  As
  we demonstrate, any \pprimetwo~instance can be constructed from a
  $\pone$ instance, and hence by
  Lemma~\ref{lemma:probablistic-hardness}, it is hard to solve
  \pprimetwo~up to a particular accuracy.
\item Our next step is to show that (a) any estimator $\thetahat$
  solving the regression problem $\ptwo$~also solves the intermediate
  problem \pprimetwo; and (b) the prediction error associated with
  problems $\ptwo$~and \pprimetwo~are of the same magnitude.
  Consequently, there is a lower bound on the error of $\thetahat$ for
  solving the regression problem $\ptwo$.
\end{itemize}

In detail, there are two main differences between the linear equation
$M u = M \uvec^*$ from Lemma~\ref{lemma:probablistic-hardness} and the
linear observation model (see equation~\eqref{EqnPtwoPrime}) that
underlies \pprimetwo.  First, in addition to the basic matrix $M$, the
design matrix $\Xmat$ contains additional $n/2$ bottom rows consisting
of independent Gaussian entries, Second, in the linear regression
problem, the response vector $\yvec$ is corrupted by Gaussian noise,
whereas the original linear system $Mu = M\uvec^*$ is based on a
noiseless response vector (namely, $M\uvec^*$).  Our reduction from
$\pone$ to \pprimetwo~ bridges these gaps.


\paragraph{Constructing a \pprimetwo~instance from a \pone~instance:}

Suppose that we are given an instance $(M,M\uvec^*)$ of problem \pone,
where the vector $\uvec^*$ is sampled from $\qprob$.  The input of the
\pprimetwo~problem takes the form $(\Xmat, \yvecprime)$, where
$\Xmat\in\R^{\numobs\times\usedim}$ is a matrix depending on $M$, and
$\yvecprime\in \R^\numobs$ is a vector depending on
$(M,M\uvec^*)$. The goal of both \pone~and \pprimetwo~is to recover
the unknown vector $\uvec^*$.  We now construct a set of
\pprimetwo~instances $(\Xmat,y_i)$, indexed by
$i\in\{1,2,\dots,\tdim\}$, and we show that solving any of them
provides a solution to \pone.  We have already discussed how to
construct the design matrix $\Xmat$ in
Section~\ref{sec:define-distribution}, so that it remains to construct
the response vector $\yvecprime_i$. By focusing on a concrete index
$i$, we suppress the dependence on the index $i$ and write $\yvecprime
\defeq \yvecprime_i$ for the conciseness of writing. Recalling our
notation $\{\theta_j, j \in [\tdim]\}$ for the subvectors of the
vector $\theta \in \real^\usedim$, our construction consists of the
following three steps:
\begin{enumerate}
 \item[(i)] Take as input a constant vector $\TSTARI \in \R^\usedim$
   provided by an external source.

\item[(ii)] Form the vector $\IOPTI \in \real^{\usedim}$ with
  subvectors specified as
\begin{align*}
\IOPTI_\ell & \defn \begin{cases} \rho \uvec^* & \mbox{if $\ell = i$}
  \\ \TSTARI_\ell & \mbox{if $\ell \in [\tdim] \backslash i$,}
\end{cases}
\end{align*}
with all other coordinates set to zero.
\item[(iii)] Form the response vector
\begin{align}
\label{EqnPtwoPrime}
\yvecprime & = \begin{bmatrix} \Xup \: \IOPTI \\ \Xdown \: \TSTARI
    \end{bmatrix} + w,
\end{align}
where $w\sim N(0,\sigma^2I_{n\times n})$ is a Gaussian noise vector,
Note that this is not a standard linear regression model, since a
different regression vector is used for the top and bottom $n/2$
responses (the regression vectors $\IOPTI$ and $\TSTARI$,
respectively).
\end{enumerate}
It is important to note that even though $\IOPTI$ contains $\rho
\uvec^*$ as a subvector, constructing the matrix vector product $\Xup
\IOPTI$ only requires the knowledge of matrix vector product $M
\uvec^*$. The external constant vector $\TSTARI$ from step (i) can be
seen as an ``advice string'', as permitted for solving problems in the
$\ppoly$~class (recall the definition in Section~\ref{SecMain}). Note
that its value is independent of the $\pone$~instance $(M,M\uvec^*)$.

\paragraph{Solving \pprimetwo~via an estimator for \ptwo:}

Having constructed a \mbox{\pprimetwo~instance} $(\Xmat,\yvecprime)$
from $(M, M\uvec^*)$, we now show how an estimator for solving a
$\ptwo$ instance can be used to solve the \pprimetwo~instance, and
hence to solve the original \mbox{$\pone$ instance.}  More precisely,
given an estimator $\thetahat$ for problem \mbox{$\ptwo$,} we output
$\PLAINSOLVER_i \defn \thetahat_i(X,\yvecprime)/\rho$ as a solution to
\pprimetwo, estimating the unknown vector $\uvec^*$.  The following
lemma provides an upper bound on the probability of the ``success
event'' that the outputted solution $\PLAINSOLVER_i$ is
$(\kdim/\tdim)$-sparse, and achieves a small prediction error. Its
proof exploits the hardness result from
Lemma~\ref{lemma:probablistic-hardness}.

\begin{lemma}
\label{lemma:lower-bound-by-p1-hardness}
Assume that $\max\{\rdim, 1/\rdim\}$ is bounded by a polynomial
function of $(\numobs,\usedim,\kdim)$, and let $\pfunc$ be an
arbitrary polynomial function.  Then for any vector $\theta \in
\R^\usedim$ such that $\norms{\theta}_\infty \leq \rho$ and $\theta =
\lfloor \theta \rfloor_L$, we have
\begin{align}
\label{eqn:lower-bound-by-p1-hardness}
\mprob \Big[\PLAINSOLVER_i\in \Ball_0(\kdim/\tdim) \mbox{ and }
  \norm{M\PLAINSOLVER_i - M\uvec^*}_2 < 1/2 ~\mid \TSTARI = \theta
  \Big] \leq \frac{1}{h(m)}.
\end{align}
\end{lemma}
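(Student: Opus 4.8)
The plan is to argue by contradiction: suppose the bound~\eqref{eqn:lower-bound-by-p1-hardness} fails, so that for some admissible advice vector $\theta$ the success event has probability exceeding $1/\pfunc(\mdim)$ (after translating $\pfunc$ of $(\numobs,\usedim,\kdim)$ back into a polynomial of $\mdim$, which is possible since $\usedim, \numobs, \kdim$ are all polynomially bounded in $\mdim$). From such a success we will extract a solver for problem $\pone$ that succeeds with probability exceeding $1/\pfunc(\mdim)$, contradicting Lemma~\ref{lemma:probablistic-hardness}. The solver works as follows: given a $\pone$-instance $(M, M\uvec^*)$ with $\uvec^* \sim \qprob$, hard-wire the fixed matrix $\Xmat$ (which depends only on $M$, the Gaussian realization $R$ from Lemma~\ref{lemma:existence-of-nice-matrix}, and the quantization level $\ldim$) and the fixed advice vector $\TSTARI = \theta$ into the code, draw a fresh Gaussian noise vector $w$, build $\yvecprime$ via~\eqref{EqnPtwoPrime}, run the $(\Lint,\pFunb,\pFunc)$-efficient estimator $\thetahat$ on $(\Xmat, \yvecprime)$, and output $\PLAINSOLVER_i = \thetahat_i(\Xmat,\yvecprime)/\rho$.

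First I would verify the \emph{encoding and runtime bookkeeping}: the matrix $\Xmat = \floor{\Cmat_x}_\ldim$ has $\numobs\usedim$ entries each of bit-length $\size(\cdot;\ldim)$, and with $\ldim$ as chosen in~\eqref{EqnLchoice} this is polynomial in $\mdim$; similarly the advice $\theta$ is polynomially long since $\norms{\theta}_\infty \le \rho$, $\theta = \floor{\theta}_\ldim$, and $\max\{\rdim,1/\rdim\}$ is polynomially bounded by hypothesis. Hence the composed solver is encodable in $\pfuna(\mdim)$ bits for a suitable polynomial $\pfuna$. Its runtime is that of the efficient estimator $\thetahat$, namely $\pFunc(\size(\Xmat,\yvecprime;\ldim))$, again polynomial in $\mdim$; note that producing $\Xup\IOPTI$ requires only the product $M\uvec^*$ given to us as input, plus the fixed quantities, so the reduction itself is polynomial-time. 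The key point, already flagged in the text, is that $\TSTARI$ is a legitimate $\ppoly$ advice string because its value does not depend on the particular $\pone$-instance $(M,M\uvec^*)$.

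Next I would verify the \emph{success-event implication}: on the event that $\PLAINSOLVER_i \in \Ball_0(\kdim/\tdim)$ and $\norm{M\PLAINSOLVER_i - M\uvec^*}_2 < 1/2$, the solver $\PLAINSOLVER_i$ is a valid answer to the $\pone$-instance. Since $\uvec^* \in \{0,1\}^{4\pdim}\cap\Ball_0(\mdim/3+\pdim)$ and $\PLAINSOLVER_i$ lies in $\Ball_0(\kdim/\tdim) = \Ball_0(\mdim/3+\pdim)$ with $\norm{M\PLAINSOLVER_i - M\uvec^*}_2 < 1/2$, this is precisely the success criterion of Lemma~\ref{lemma:probablistic-hardness} (Lemma~\ref{lemma:np-hardness-of-solving-linear-equation} tells us why this is hard). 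Thus probability exceeding $1/\pfunc(\mdim)$ of the success event translates directly into $\qprob[\norm{M\PLAINSOLVER_i - M\uvec^*}_2 < 1/2] > 1/\pfunc(\mdim)$ for a polynomially-bounded $\pfunc$, contradicting~\eqref{EqnHbound}.

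The main obstacle is the \emph{randomness bookkeeping in the reduction}. The statement of Lemma~\ref{lemma:probablistic-hardness} concerns deterministic solvers, yet our reduction injects fresh Gaussian noise $w$ (to form $\yvecprime$); moreover $\Xmat$ itself was obtained from a \emph{particular} realization of the random matrix $R$ (Lemma~\ref{lemma:existence-of-nice-matrix}), so that realization must also be hard-coded. I would handle the noise either by (i) averaging: if the success probability over the joint randomness of $\uvec^*$ and $w$ exceeds $1/\pfunc(\mdim)$, then there is a fixed noise realization $w_0$ for which the success probability over $\uvec^*$ alone exceeds $1/\pfunc(\mdim)$, and we hard-wire $w_0$; or (ii) by appealing to the fact that $w$ quantized to accuracy $\ldim$ has a polynomially-long description, so a pseudo-random or enumerated noise string fits within the advice/encoding budget. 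Either route keeps the solver deterministic and polynomially encoded, closing the contradiction. A secondary nuisance is keeping the many quantization levels consistent — the input is $\floor{\Xmat}_\ldim$ and $\floor{\yvecprime}_\ldim$, and one must check that the truncation in $\rho = \floor{\rdim/\sqrt{\mdim/3+\pdim}}_\ldim$ together with $\norms{\theta}_\infty\le\rho$ still makes $\Xup\IOPTI$ exactly recoverable from $M\uvec^*$ at accuracy $\ldim$ — but this is the routine calculation the choice of $\ldim$ in~\eqref{EqnLchoice} was designed to absorb.
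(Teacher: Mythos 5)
Your plan correctly identifies the reduction to Lemma~\ref{lemma:probablistic-hardness} as the engine of the proof, and the two bookkeeping checks you flag (polynomial code length of the composed solver, polynomial runtime) are exactly what the paper verifies in Appendix~\ref{AppLemP1Hard}. The contradiction framing versus the paper's direct probability bound is purely stylistic; both amount to the same reduction.

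However, there is a genuine gap in the part you yourself identify as ``the main obstacle,'' and neither of your two proposed routes closes it. Route (i) --- a plain averaging argument that extracts some fixed $w_0$ whose conditional success probability exceeds $1/\pfunc(\mdim)$ --- gives you no control on the \emph{magnitude} of $w_0$. Since $w_0$ must be hard-coded into the solver and the hardness lemma requires total code length polynomial in $\mdim$, you need the entries of $w_0$ to be bounded; an entry of size, say, $2^{2^{\mdim}}$ cannot be stored in polynomially many bits, and a raw averaging argument over an unbounded Gaussian does not rule this out. Route (ii) addresses precision (which is real, and the paper does verify that quantizing $w_0$ at level $\max\{2\ldim,\pFunb\}$ is harmless since the estimator only sees $\floor{\yvecprime}_{\pFunb}$) but does nothing about magnitude: a Gaussian entry quantized to fixed precision can still require unboundedly many bits to record its integer part. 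The missing device is the one the paper uses: first truncate the noise by introducing the event $\event_1 = \{\|w\|_\infty \leq \trunlevel\}$ with $\trunlevel$ chosen so that $\mprob[\event_1^c] \leq \frac{1}{2h(\mdim)}$, split the success probability as
\begin{align*}
\mprob[\event_2] \;\leq\; \max_{\|z\|_\infty \leq \trunlevel} \mprob[\event_2 \mid w = z] \;+\; \frac{1}{2h(\mdim)},
\end{align*}
and only then average to pick $w_0$ \emph{within} the truncated ball. This simultaneously makes $w_0$ encodable (bounded magnitude plus the quantization you already noted) and sets up the budget arithmetic: the hardness lemma, applied to this now legitimately polynomial-complexity solver with $w_0$ hard-wired, gives $\mprob[\event_2 \mid w = w_0] \leq \frac{1}{2h(\mdim)}$, and the two halves add up to the stated $\frac{1}{h(\mdim)}$. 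Your plan as written would not produce this two-piece accounting and would be stuck at the encodability step.
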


\noindent Note that upper bound~\eqref{eqn:lower-bound-by-p1-hardness}
holds for any assignment to $\TSTARI$.  Thus, it holds if we randomize
it by drawing $\TSTARI \in \R^\usedim$ randomly from a probability
distribution.  In particular, suppose that we construct arandom vector
$\TSTARI$ as follows:
\begin{itemize}
\item for each $j \in [\tdim] \backslash\{i\}$, we draw the $j^{th}$
  sub-vector $\TSTARI_j/\rho \in \real^{4 \pdim}$ independently from
  distribution~$\qprob$.
\item All other coordinates of $\TSTARI$ are set to zero.
\end{itemize}
It is straightforward to verify that the random vector $\TSTARI$
constructed in this way satisfies $\norms{\TSTARI}_\infty \leq \rho$
and $\TSTARI = \lfloor \TSTARI \rfloor_L$.  Thus, integrating over the
randomness of $\TSTARI$, Lemma~\ref{lemma:lower-bound-by-p1-hardness}
implies
\begin{align}
\label{eqn:lower-bound-by-p1-hardness-unconditioned}
  \mprob \Big[\PLAINSOLVER_i\in \Ball_0(\kdim/\tdim) \mbox{ and }
    \norm{M\PLAINSOLVER_i - M\uvec^*}_2 < 1/2\Big] \leq
  \frac{1}{h(m)}.
\end{align}
We assume for the rest of the proof that $\TSTARI$ is drawn from the
specified distribution. By this construction, it is important to note
that $\IOPTI$ and $\thetastar$ have the same distribution, because
each of their $4\pdim$-length blocks are independently drawn from the
same distribution~$\qprob$. To prove the Thoerem~\ref{ThmMain}, it
remains to convert the hardness
result~\eqref{eqn:lower-bound-by-p1-hardness-unconditioned} to a lower
bound on the mean-squared error $\E \Big[ \|\Xmat \thetahat(\Xmat, y)
  - X\thetastar\|_2^2 \Big]$.

\paragraph{Proving a lower bound on \ptwo~via the hardness of \pprimetwo:}
For the purposes of analysis, it is convenient to introduce the
auxiliary response vector $\ydoub \defn \Xmat \IOPTI + w$.  Since the
random vectors $\IOPTI$ and $\thetastar$ have the same distribution,
the response vectors $\yvec \defeq \Xmat \thetastar + w$ and $\ydoub$
have the same distribution.  We also define an auxiliary estimator
$\AUXSOLVER_i \defn \thetahat_i(\Xmat,\ydoub)/\rho$. Since $\yvec$ and
$\ydoub$ share the same probability distribution, we note that the
estimator $\AUXSOLVER_i$ shares the same distribution as
$\thetahat_i(\Xmat, \yvec)/\rho$. Consequently, we have
\begin{align}
 &\E \left[\frac{1}{\numobs} \|\Xmat \thetahat(\Xmat,y)- \Xmat
    \thetastar\|_2^2 \right] \geq \E \left[ \frac{1}{\numobs} \,
    \|\Xup \thetahat(\Xmat,y) - \Xup \thetastar||_2^2 \right] \;
  \nonumber\\ &\qquad \stackrel{(i)}{=} \; \frac{1}{\numobs} \,
  \frac{\numobs}{6 \kdim} \, \frac{\tdim}{2} \sum_{i=1}^\tdim \E \big[
    \|M \thetahat(\Xmat,y)_i - M \thetastar_i\|_2^2 \big]
  \stackrel{(ii)}{=} \frac{\tdim \rho^2}{12\kdim} \sum_{i=1}^\tdim \E
  \big[ \|M \AUXSOLVER_i - M \uvec^*\|_2^2
    \big] \label{eqn:from-local-matrix-to-random-matrix}.
\end{align}
Here, equality (i) holds because $\frac{\numobs}{6 \kdim}$ is the
number of replicates of the submatrix $B_\kdim$ in the full design
matrix $\Xmat$, and $\frac{\tdim}{2}$ is the scale factor in the
definition of $B_\kdim$. Equality (ii) holds because the pair
$(\AUXSOLVER_i, \uvec^*)$ has the same probability distribution as the
pair $(\thetahat_i(\Xmat, \yvec)/\rho, \thetastar_i/\rho)$. Hence, it
suffices to lower bound the right-hand side of
inequality~\eqref{eqn:from-local-matrix-to-random-matrix}.\\

In order to utilize the hardness result of
inequality~\eqref{eqn:lower-bound-by-p1-hardness-unconditioned}, we
need to show that $\|M \AUXSOLVER_i - M \uvec^*\|_2^2$ and $\|M
\PLAINSOLVER_i - M \uvec^*\|_2^2$ are of the same order of
magnitude. Note that the estimator $\AUXSOLVER_i$ takes
$(\Xmat,\ydoub)$ as input, whereas the estimator $\PLAINSOLVER_i$
takes $(\Xmat,\yvecprime)$ as input.
Since the top $\numobs/2$ coordinates of $\ydoub$ are identical to
those of $\yvecprime$, one might expect that the outputs of
$\AUXSOLVER_i$ and $\PLAINSOLVER_i$ are strongly correlated; it
remains to prove this intuition.

Using $\BOT{\ydoub}$ and $\BOT{\yvecprime}$ to denote the bottom
$\numobs/2$ co-ordinates of $\ydoub$ and $\yvecprime$ respectively, it
is easy to check that $\BOT{\ydoub} \sim N(\Xdown \IOPTI, \sigma^2
I_{\numobs/2 \times \numobs/2})$ and $\BOT{\yvecprime} \sim N(\Xdown
\TSTARI, \sigma^2 I_{\numobs/2 \times \numobs/2})$. The means of
$\BOT{\ydoub}$ and $\BOT{\yvecprime}$ differ by $\Xmat(\IOPTI -
\TSTARI)$.  Moreover, we know by construction that $\ltwos{\IOPTI -
  \TSTARI} = \ltwos{\rho \uvec^*} \leq \rdim$.  These observations are
sufficient for lower bounding the right-hand side of
inequality~\eqref{eqn:from-local-matrix-to-random-matrix}.\\

We now state two lemmas that characterize the properties of the
estimator $\AUXSOLVER_i$ and the
estimator $\PLAINSOLVER_i$. The first lemma shows that a large subset of the estimators
$\{ \PLAINSOLVER_i, i \in [\tdim]\}$ return sparse outputs with
positive probability. See Appendix~\ref{AppLemSparseEst} for the proof of this
claim.
\begin{lemma}
\label{lemma:sparse-estimator-with-high-probability}
There is a set $\Tset \subset \{1,\dots,t\}$ with cardinality $|\Tset|
\geq \frac{t^2}{2 \kdim+\tdim}$ such that
\begin{align*}
\mprob \Big[\PLAINSOLVER_i \in \Ball_0(\kdim/\tdim) \Big] & \geq
e^{-1} \Big \{ \frac{\tdim}{2 \kdim +2 \tdim} - \frac{25\RECON r
  \sqrt{\numobs}}{\sigma} \Big \} \qquad \mbox{for each $i \in
  \Tset$.}
\end{align*}
\end{lemma}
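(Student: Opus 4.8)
The plan is to argue in two stages. In the first stage I establish, by a deterministic pigeonhole over the $\tdim$ coordinate blocks, that for at least $\tdim^2/(2\kdim+\tdim)$ indices $i$ the auxiliary estimator $\AUXSOLVER_i = \thetahat_i(\Xmat,\ydoub)/\rho$ lands in $\Ball_0(\kdim/\tdim)$ with probability at least $\tdim/(2\kdim+2\tdim)$; in the second stage I transfer this bound to $\PLAINSOLVER_i = \thetahat_i(\Xmat,\yvecprime)/\rho$ by a change of measure between the conditional laws of $\ydoub$ and $\yvecprime$. For the first stage: since $\thetahat \in \ALG(\kdim)$, every output $\thetahat(\Xmat,\ydoub)$ belongs to $\Ball_0(\kdim)$, so among the first $4\pdim\tdim$ coordinates — split into the $\tdim$ blocks of length $4\pdim$ — there are at most $\kdim$ nonzero entries on every realization. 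Because $\kdim/\tdim = \mdim/3+\pdim$ is an integer, at most $\kdim/(\kdim/\tdim+1) = \tdim\kdim/(\kdim+\tdim)$ blocks can carry more than $\kdim/\tdim$ nonzeros, so at least $\tdim - \tdim\kdim/(\kdim+\tdim) = \tdim^2/(\kdim+\tdim)$ of the subvectors $\thetahat(\Xmat,\ydoub)_i$ lie in $\Ball_0(\kdim/\tdim)$, pointwise. Writing $a_i := \mprob[\AUXSOLVER_i \in \Ball_0(\kdim/\tdim)]$ and taking expectations gives $\sum_{i=1}^{\tdim} a_i \geq \tdim^2/(\kdim+\tdim)$, and then a one-line averaging argument shows that $\Tset := \{i \in [\tdim]: a_i \geq \tdim/(2\kdim+2\tdim)\}$ satisfies $|\Tset| \geq \tdim^2/(2\kdim+\tdim)$ (if $|\Tset|=\tau$ then $\tdim^2/(\kdim+\tdim) \leq \tau + (\tdim-\tau)\,\tdim/(2\kdim+2\tdim)$, which rearranges to $\tau \geq \tdim^2/(2\kdim+\tdim)$).

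For the second stage, fix $i \in \Tset$ and let $E_i \subseteq \R^{\numobs}$ be the fixed measurable set of response vectors $v$ for which the $i$-th block of $\thetahat(\Xmat,v)$ is $(\kdim/\tdim)$-sparse, so that $\mprob[\PLAINSOLVER_i \in \Ball_0(\kdim/\tdim)] = \mprob[\yvecprime \in E_i]$ and $a_i = \mprob[\ydoub \in E_i]$. Conditioned on $(\uvec^*,\TSTARI)$, the vectors $\yvecprime$ and $\ydoub$ agree in their top $\numobs/2$ coordinates and, in their bottom $\numobs/2$ coordinates, are Gaussian with covariance $\sigma^2 I$ and means $\Xdown\TSTARI$, $\Xdown\IOPTI$; their difference $\Xdown(\IOPTI-\TSTARI)$ is supported on block $i$, where it equals $\rho\uvec^*$. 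Since $\uvec^* \in \{0,1\}^{4\pdim}$ has at most $\mdim/3+\pdim = \kdim/\tdim$ nonzeros, the vector $\IOPTI-\TSTARI$ lies in $\Ball_0(2\kdim)$ with $\ell_2$-norm at most $\rho\sqrt{\mdim/3+\pdim} \leq \rdim$, so Lemma~\ref{lemma:existence-of-nice-matrix} yields $\|\Xdown(\IOPTI-\TSTARI)\|_2 \leq 25\RECON\rdim\sqrt{\numobs}$ — uniformly over $(\uvec^*,\TSTARI)$. I then run a change-of-measure estimate conditionally on $(\uvec^*,\TSTARI)$: with $\Lambda$ the likelihood ratio of the conditional law of $\yvecprime$ against that of $\ydoub$, restricting the identity $\mprob[\yvecprime \in E_i \mid \uvec^*,\TSTARI] = \E[\Lambda\,\mathbf{1}\{\ydoub \in E_i\} \mid \uvec^*,\TSTARI]$ to $\{\Lambda \geq e^{-1}\}$ gives $\mprob[\yvecprime \in E_i \mid \uvec^*,\TSTARI] \geq e^{-1}\big(\mprob[\ydoub \in E_i \mid \uvec^*,\TSTARI] - \mprob[\Lambda < e^{-1} \mid \uvec^*,\TSTARI]\big)$; since $-\log\Lambda$ is, under $\ydoub$, Gaussian with mean and standard deviation of order $\|\Xdown(\IOPTI-\TSTARI)\|_2/\sigma$, a direct Gaussian tail bound controls $\mprob[\Lambda < e^{-1} \mid \uvec^*,\TSTARI]$ by $25\RECON\rdim\sqrt{\numobs}/\sigma$. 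Integrating over $(\uvec^*,\TSTARI)$ and using $a_i \geq \tdim/(2\kdim+2\tdim)$ for $i \in \Tset$ then produces exactly the claimed inequality. (The estimator actually sees a quantized version of the response, but since quantization is a fixed deterministic map it does not enlarge the discrepancy between the two conditional laws and may be ignored.)

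The combinatorial first stage is routine. The crux is the change-of-measure step, and within it the control of $\|\Xdown(\IOPTI-\TSTARI)\|_2$ through the operator-norm bound $\|\Xdown\theta\|_2/\sqrt{\numobs} \leq 25\RECON\|\theta\|_2$ of Lemma~\ref{lemma:existence-of-nice-matrix}: this is precisely where the restricted-eigenvalue constant enters, since it is the smallness of the bottom block of the design on sparse vectors that makes $\yvecprime$ and $\ydoub$ nearly indistinguishable, and the same feature drives the $1/\RECON^2$ factor in the final theorem. A secondary point requiring attention is to keep every estimate uniform over the advice vector $\TSTARI$ (and over $\uvec^*$) so that the conditional bounds survive integration.
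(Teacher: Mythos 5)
Your argument is correct and follows the paper's own proof essentially step for step: the same change-of-measure inequality relating $\PLAINSOLVER_i$ to $\AUXSOLVER_i$, the same Gaussian likelihood-ratio bound on $\mprob[\Lambda < e^{-1}]$ controlled via $\|\Xdown(\IOPTI-\TSTARI)\|_2 \leq 25\RECON\rdim\sqrt{\numobs}$, and the same pigeonhole-plus-averaging argument over the $\tdim$ blocks to extract the set $\Tset$. The only cosmetic differences are the order of the two stages and your explicit conditioning on $(\uvec^*,\TSTARI)$, which the paper leaves implicit.
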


\vspace{10pt}
\noindent To understand the consequence of Lemma~\ref{lemma:sparse-estimator-with-high-probability}, we examine
the probability that $\norms{M\PLAINSOLVER_i - M\uvec^*}_2$ is lower bounded by $1/2$.
By the union bound, we have
\begin{align*}
  \mprob \Big[ \norm{M\PLAINSOLVER_i - M\uvec^*}_2 \geq 1/2\Big] + \mprob\Big[\PLAINSOLVER_i \notin \Ball_0(\kdim/\tdim) \Big] \geq 1 - \mprob \Big[\PLAINSOLVER_i\in \Ball_0(\kdim/\tdim) \mbox{ and } \norm{M\PLAINSOLVER_i - M\uvec^*}_2 < 1/2\Big],
\end{align*}
and consequently,
\begin{align*}
  \mprob \Big[ \norm{M\PLAINSOLVER_i - M\uvec^*}_2 \geq 1/2\Big] &\geq \mprob\Big[\PLAINSOLVER_i \in  \Ball_0(\kdim/\tdim) \Big] - \mprob \Big[\PLAINSOLVER_i\in \Ball_0(\kdim/\tdim) \mbox{ and } \norm{M\PLAINSOLVER_i - M\uvec^*}_2 < 1/2\Big].
\end{align*}
According to Lemma~\ref{lemma:sparse-estimator-with-high-probability}, there is a set $\Tset$
with cardinality $|\Tset| \geq \frac{t^2}{2 \kdim+\tdim}$, such that $\mprob\Big[\PLAINSOLVER_i \in  \Ball_0(\kdim/\tdim) \Big]$
is lower bounded by a positive constant for all $i\in \Tset$.
Combining the lower bound with the above inequality and inequality~\eqref{eqn:lower-bound-by-p1-hardness-unconditioned},
we obtain
\begin{align}\label{eqn:lower-bound-independent-of-sparsity}
  \mprob \Big[ \norm{M\PLAINSOLVER_i - M\uvec^*}_2 \geq 1/2\Big] &\geq e^{-1} \Big \{ \frac{\tdim}{2 \kdim +2 \tdim} - \frac{25\RECON r
  \sqrt{\numobs}}{\sigma} \Big \} - \frac{1}{\pfunc(m)} \qquad \mbox{for each $i \in
  \Tset$.}
\end{align}
Inequality~\eqref{eqn:lower-bound-independent-of-sparsity} provides a lower bounded on the norm $\norm{M\PLAINSOLVER_i - M\uvec^*}_2$,
independent of the sparsity level of the estimator~$\PLAINSOLVER_i$.

The second lemma upper bounds the prediction error of $\PLAINSOLVER_i$ in terms of the
mean-squared prediction error of the auxiliary solver $\AUXSOLVER_i$. See Appendix~\ref{AppLemErrorControl} for the proof of this
claim.
\begin{lemma}
\label{lemma:error-control}
As long as $\frac{\RECON r \sqrt{n}}{\sigma} \leq 1/50$, we are
guaranteed that
\begin{align*}
\norm{M\PLAINSOLVER_i - M\uvec^*}_2^2 \leq \frac{2e\sigma}{25\RECON r
  \sqrt{\numobs}} \, \E\Big[\norm{M\AUXSOLVER_i - M\uvec^*}_2^2\Big].
\end{align*}
with probability at least $1 - \frac{50\RECON r\sqrt{n}}{\sigma}$.
\end{lemma}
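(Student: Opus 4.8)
The plan is to read the claim as a change-of-measure estimate comparing the instance $(\Xmat,\yvecprime)$ fed to $\PLAINSOLVER_i$ with the instance $(\Xmat,\ydoub)$ fed to $\AUXSOLVER_i$. Recall that $\yvecprime$ and $\ydoub$ share their top $\numobs/2$ coordinates, while (conditionally on $\uvec^*$ and the advice vector $\TSTARI$) their bottom halves are $N(\Xdown\TSTARI,\sigma^2 I)$ and $N(\Xdown\IOPTI,\sigma^2 I)$ respectively; hence the laws of $\yvecprime$ and $\ydoub$ on $\real^\numobs$ are equivalent Gaussians with common covariance $\sigma^2 I$ whose means differ only in the bottom block, by $\Xdown(\IOPTI-\TSTARI)$. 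By construction $\IOPTI-\TSTARI$ is supported on a single length-$4\pdim$ block, where it equals $\rho\uvec^*$; thus $\IOPTI-\TSTARI\in\Ball_0(2\kdim)$ and $\norm{\IOPTI-\TSTARI}_2=\norm{\rho\uvec^*}_2\le\rho\sqrt{\mdim/3+\pdim}\le r$, so the right-hand inequality of~\eqref{eqn:design-matrix-normalization-property} (applied to $\theta=\IOPTI-\TSTARI$) gives $\norm{\Xdown(\IOPTI-\TSTARI)}_2\le 25\RECON r\sqrt{\numobs}$. Write $\delta\defn 25\RECON r\sqrt{\numobs}/\sigma$; the hypothesis becomes $\delta\le 1/2$, and the assertion to be proved reads: $\norm{M\PLAINSOLVER_i-M\uvec^*}_2^2\le(2e/\delta)\,\E[\norm{M\AUXSOLVER_i-M\uvec^*}_2^2]$ with probability at least $1-2\delta$.

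Next I reduce this to a tail bound for the likelihood ratio. The squared error $\norm{M\PLAINSOLVER_i-M\uvec^*}_2^2$ equals $F(\yvecprime)$ for the fixed nonnegative function $F(v)\defn\norm{M\thetahat_i(\Xmat,v)/\rho-M\uvec^*}_2^2$, and $\norm{M\AUXSOLVER_i-M\uvec^*}_2^2=F(\ydoub)$ is the same function evaluated at $\ydoub$. Let $\Lambda$ be the Radon--Nikodym derivative of the law of $\yvecprime$ with respect to that of $\ydoub$. Because these are Gaussians with common covariance and mean gap of norm at most $\delta\sigma$, a routine computation shows that under the law of $\yvecprime$ the variable $\log\Lambda$ is $N(a^2/2,a^2)$-distributed for some $a\le\delta$.

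Set $\mu\defn\E[F(\ydoub)]$ and $c\defn(2e/\delta)\mu$, the right-hand side of the claimed bound. Changing measure from $\yvecprime$ to $\ydoub$ and splitting according to whether $\Lambda$ exceeds $e$,
\begin{align*}
\mprob[F(\yvecprime)>c]
&=\E_{\ydoub}\!\bigl[\Lambda\,\Ind[F(\ydoub)>c]\bigr]\\
&\le e\,\mprob[F(\ydoub)>c]+\E_{\ydoub}\!\bigl[\Lambda\,\Ind[\Lambda>e]\bigr]
= e\,\mprob[F(\ydoub)>c]+\mprob[\Lambda>e],
\end{align*}
using the identity $\E_{\ydoub}[\Lambda\,\Ind[\Lambda\in B]]=\mprob_{\yvecprime}[\Lambda\in B]$. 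Markov's inequality bounds the first term by $e\mu/c=\delta/2$; for the second, the law of $\log\Lambda$ gives $\mprob[\Lambda>e]=\mathbb{P}[Z>a^{-1}-a/2]\le\mathbb{P}[Z>\delta^{-1}-\delta/2]\le\delta$, where the last step uses $\delta\le 1/2$ (so that $\delta^{-1}-\delta/2\ge 3/(4\delta)$) together with the standard Gaussian tail bound. Hence $\mprob[F(\yvecprime)>c]\le\tfrac32\delta\le 2\delta=50\RECON r\sqrt{\numobs}/\sigma$, which is the assertion of the lemma. (If the expectation and probability are intended unconditionally rather than for fixed $\uvec^*,\TSTARI$, one integrates over $(\uvec^*,\TSTARI)$ before applying Markov; since the bound $a\le\delta$ is uniform in $(\uvec^*,\TSTARI)$, one still has $\mprob[\Lambda>e]\le\delta$, and nothing else changes.)

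The main obstacle is the likelihood-ratio tail term $\mprob[\Lambda>e]$: the ratio $\Lambda$ between the two response models is log-normal with an unbounded upper tail, so it cannot be absorbed by a crude second-moment estimate---Cauchy--Schwarz against $\E_{\ydoub}[\Lambda^2]=e^{a^2}$ would only deliver an $O(\sqrt{\delta})$ failure probability, not $O(\delta)$. Thresholding the reweighted integral at $\Lambda=e$ and then exploiting the explicit Gaussian tail of $\log\Lambda$, together with the budget $\delta\le 1/2$ supplied by the normalization hypothesis, is precisely what closes the argument and makes the numerical constants ($e$, $2e$, $25$, $50$) fit.
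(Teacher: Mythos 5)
Your proof is correct and rests on the same core machinery as the paper's: a change of measure between the response $\yvecprime$ (fed to $\PLAINSOLVER_i$) and the response $\ydoub$ (fed to $\AUXSOLVER_i$), a truncation of the likelihood ratio at level $e$, a Gaussian tail estimate for the log-likelihood ratio (which under $\yvecprime$ is $N(a^2/2,a^2)$ with $a\le 25\RECON r\sqrt{\numobs}/\sigma$), and Markov's inequality. The paper organizes the same ingredients differently: it first lower-bounds $\E[\norms{M\AUXSOLVER_i-M\uvec^*}_2^2]$ by $e^{-1}\,\E[\norms{M\PLAINSOLVER_i-M\uvec^*}_2^2\mid\Lambda\le e]\,\mprob[\Lambda\le e]$, uses the hypothesis $\delta\le 1/2$ to bound $\mprob[\Lambda\le e]\ge 1/2$, then applies Markov's inequality conditionally and multiplies the two probability factors to land on $(1-\delta)^2\ge 1-2\delta$. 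You instead bound the failure probability $\mprob[F(\yvecprime)>c]$ directly in one pass by splitting $\E_{\ydoub}[\Lambda\,\Ind\{F>c\}]$ at the threshold $\Lambda=e$, which yields the slightly tighter $\tfrac32\delta\le 2\delta$ and avoids the conditional-expectation detour. Both derivations give exactly the claimed constant $50\RECON r\sqrt{\numobs}/\sigma$. One small thing worth making explicit if you polish this: the application of the normalization bound~\eqref{eqn:design-matrix-normalization-property} to $\theta=\IOPTI-\TSTARI$ requires that this difference lies in $\Ball_0(2\kdim)$; this follows because $\IOPTI-\TSTARI=\rho\uvec^*$ on a single block and $\uvec^*\in\Ball_0(\mdim/3+\pdim)=\Ball_0(\kdim/\tdim)\subset\Ball_0(\kdim)$, a slightly sharper statement than the $4\pdim$-block-support bound you invoke.
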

\vspace{10pt}

By combining Lemma~\ref{lemma:error-control} and inequality~\eqref{eqn:lower-bound-independent-of-sparsity},
we obtain a lower bound on the mean-squared prediction error of the auxiliary solver $\AUXSOLVER_i$.
In particular, for each index $i\in \Tset$, we have
\begin{align}\label{eqn:sub-problem-mse-lower-bound-before-assignment}
  \E\Big[\norm{M\AUXSOLVER_i - M\uvec^*}_2^2\Big] \geq \frac{1}{4}\cdot \frac{25\RECON r
  \sqrt{\numobs}}{2e\sigma}
\end{align}
with probability at least
\begin{align*}
   e^{-1}\Big\{ \frac{\tdim}{2 \kdim +2 \tdim} - \frac{25\RECON r
  \sqrt{\numobs}}{\sigma} \Big \} - \frac{50\RECON r\sqrt{n}}{\sigma} - \frac{1}{\pfunc(m)}.
\end{align*}
We proceed by assigning particular values to the parameters
$\tdim$ and $\rdim$.  In particular, we set
\begin{align}
\label{EqnRchoice}
&\rdim \, \defn \frac{\sigma}{25\times 16\RECON\sqrt{n}}\:\frac{t}{k+t}, \quad
\mbox{and} \quad \tdim \defn \lceil \big(\frac{\mdim}{3} + \pdim
\big)^{\frac{1-\alpha}{\alpha}} \rceil,
\end{align}
where $\alpha \in (0,1)$ is to be chosen later. We note that the assignment to $\rdim$ satisfies
the assumption of Lemma~\ref{lemma:lower-bound-by-p1-hardness}.
Recalling that $\kdim
= \tdim \, (\frac{\mdim}{3}+ \pdim)$ by definition, we have the
sandwich relation
\begin{align}
\label{EqnSandwich}
\kdim^{1-\alpha} \; \leq \; \tdim \;
\leq \; \kdim,
\end{align}
as well as the inequalities
\begin{subequations}
\begin{align}
 e^{-1} \Big( \frac{\tdim}{2 \kdim +2 \tdim} - \frac{25\RECON r
   \sqrt{\numobs}}{\sigma}\Big) - \frac{50\RECON r
   \sqrt{\numobs}}{\sigma} & = \Big( \frac{7}{16e} - \frac{1}{8}\Big)
 \frac{\tdim}{\kdim + \tdim} > 0.03 \, \kdim^{-\alpha}, \qquad
 \mbox{and} \label{eqn:marginal-probability-lower-bound}\\
 \frac{25\RECON r
  \sqrt{\numobs}}{2e\sigma} = \frac{\tdim}{32 e \kdim + 32 e
   \tdim} & \geq \frac{\kdim^{-\alpha}}{64 e }.
\end{align}
\end{subequations}
By inequality~\eqref{eqn:marginal-probability-lower-bound}, we verify that
the condition of Lemma~\ref{lemma:error-control} is satisfied.
Combining with lower bound~\eqref{eqn:sub-problem-mse-lower-bound-before-assignment}, and by choosing a
polynomial function $\pfunc$
such that
$1/\pfunc(m) < 0.03 \kdim^{-\alpha}$,
we see that there is a subset
$\Tset$ with cardinality \mbox{$|\Tset|\geq \frac{1}{3} \kdim^{1-2\alpha}$} such that for each $i \in \Tset$,
\begin{align*}
  \mprob \Big[ \E\Big[\norm{M\AUXSOLVER_i - M\uvec^*}_2^2\Big] \geq
\frac{\kdim^{-\alpha}}{256 e } \Big] > 0.
\end{align*}
Since $\E[\norm{M\AUXSOLVER_i - M\uvec^*}_2^2]$ is a numerical value, the above
inequality holds if and only if it is greater than or equal to
$\frac{\kdim^{-\alpha}}{256 e}$. Plugging this lower bound into
inequality~\eqref{eqn:from-local-matrix-to-random-matrix}, and substituting in
\begin{align*}
  \rho = \floorl{ \frac{\rdim}{\sqrt{\mdim/3+\pdim}} }_\ldim  \geq  \frac{\rdim}{\sqrt{\kdim/\tdim}} - 2^{-\ldim} \geq  \frac{\rdim}{2\sqrt{\kdim/\tdim}}
\end{align*}
as well as our previous
choice~\eqref{EqnRchoice} of the parameter $\rdim$, we find that
\begin{align}
\label{EqnHana}
  \E \left[\frac{1}{\numobs} \|\Xmat \thetahat(\Xmat, y)- \Xmat \thetastar\|_2^2
   \right] &\geq \frac{\rho^2 \tdim}{12 \kdim} \sum_{i \in \Tset} \E[\norm{M\AUXSOLVER_i - M\uvec^*}_2^2] \geq \frac{\rho^2 \tdim}{12 \kdim}\; \frac{\kdim^{1-2\alpha}}{3}\; \frac{\kdim^{-\alpha}}{256 e} \nonumber\\
   &
    \geq \frac{\rdim^2 \tdim^2 \kdim^{-1-3\alpha}}{4\times 3\times 256 e} \; \geq
    \frac{c}{\RECON^2}\, \frac{\sigma^2
      \kdim^{1-7\alpha}}{\numobs},
\end{align}
where we have also used the sandwich inequalities~\eqref{EqnSandwich}.
By assumption, the
problem dimension is upper bounded as $\usedim \leq \pFuna(\kdim)$
where $\pFuna$ is a polynomial function.  Since
Lemma~\ref{lemma:probablistic-hardness} allows $\mdim_0$ to be
arbrtrary, we may choose it large enough so as to ensure that
$\kdim^{\alpha} \geq \log \usedim$, or equivalently $\kdim^{1-7\alpha}
\geq \kdim^{1-8\alpha} \log \usedim$.  Finally, setting $\alpha =
\hackpar/8$ completes the proof of the theorem.


\section{Conclusion}
\label{sec:conclusion}

In this paper, under a standard conjecture in complexity theory, we
have established a fundamental gap between the prediction error
achievable by optimal algorithms, and that achievable by
polynomial-time algorithms.  In particular, whereas the prediction
error of an optimal algorithm has no dependence on the restricted
eigenvalue constant, our theory shows that the prediction error of any
polynomial-time algorithm exhibits an inverse dependence on this
quantity (for a suitably constructed design matrix).  To the best of
our knowledge, this is the first lower bound on the
polynomially-constrained minimax rate of statistical estimation that
depends only conjectures in worst-case complexity theory.


\subsection*{Acknowledgements}

This work was supported in part by NSF CISE Expeditions award
CCF-1139158, the U.S.\ Army Research Laboratory, the U.S.\ Army
Research Office under grant number W911NF-11-1-0391, the Office of
Naval Research MURI grant N00014-11-1-0688, and DARPA XData Award
FA8750-12-2-0331, and by gifts from Amazon Web Services, Google, SAP,
Apple, Inc., Cisco, Clearstory Data, Cloudera, Ericsson, Facebook,
GameOnTalis, General Electric, Hortonworks, Huawei, Intel, Microsoft,
NetApp, Oracle, Samsung, Splunk, VMware, WANdisco and Yahoo!.


\appendix


\section{Technical lemmas for Theorem~\ref{ThmMain}}
\label{SecTechLemma}

In this appendix, we collect together the proofs of various technical
lemmas involved in the proof of our main theorem.


\subsection{Proof of Lemma~\ref{lemma:ppoly-consequence}}
\label{AppPpolyConsequence}

Consider a problem $\PROB$ for which, for any integer $\mdim >
\mdim_0$, there is a Turing machine $\MACH_\mdim$ such that:
\begin{enumerate}
\item[(a)] the code length of $\MACH_\mdim$ is at most $\pfuna(\mdim)$
  bits, and;
\item[(b)] it solves every input of length $\mdim$ with probability at
  least $1/2$, and terminates in time at most $\pfunb(\mdim)$.
\end{enumerate}
Under Assumption~\ref{assumption:np-notin-ppoly}, it suffices to show
that these two conditions certify that $\PROB \in \ppoly$.

In order to prove this claim, let $\mdim > \mdim_0$ be arbitrary.
Given a sufficiently long random sequence, we may execute
$\MACH_\mdim$ a total of $\mdim+1$ times, with the randomness in each
execution being independent. By property (b), for any binary input of
length $\mdim$, each execution has success probability at least $1/2$.
By independence, the probability that at least one of the $\mdim +
1$ executions is successful is $1-2^{-\mdim-1}$.  Since the total
number of $\mdim$-length inputs is at most $2^\mdim$, the union bound
implies that the $\mdim+1$ executions of $\MACH_\mdim$ succeed on all
inputs with probability at least $1-2^{-\mdim-1}2^\mdim=1/2 >
0$. Consequently, the probabilistic method implies that there exists some
realization of the random sequence---call it $\RAND_\mdim$---under
which $\mdim+1$ executions of $\MACH_\mdim$ solves the problem for all
inputs of length $\mdim$.

Let $\CODE_\mdim$ be the code defining machine $\MACH_\mdim$, and
consider a Turing machine $\MACHPRIME_\mdim$ that takes the pair
$(\CODE_\mdim, \RAND_\mdim)$ as an advice string, and then uses the
string $\RAND_\mdim$ to simulate the execution of $\MACH_\mdim$ a
total of $\mdim + 1$ times.  From our previous argument, the Turing
machine $\MACHPRIME_\mdim$ solves the problem on every input of length
$\mdim >\mdim_0$.  Notice that the length of string $(\CODE_\mdim,
\RAND_\mdim)$ is of the order $\order(\pfuna(\mdim) + \mdim
\pfunb(\mdim))$ bits, and the running time of $\MACHPRIME_\mdim$ is of
the order $\order(\mdim \pfunb(\mdim))$. Finally, for all input
lengths $\mdim \leq \mdim_0$, a constant-size Turing machine
$\MACH_\mdim^\star$ can solve the problem in constant time.  The
combination of $\MACHPRIME_\mdim$ and $\MACH_\mdim^\star$ provides a
Turing machine that certifies $\PROB \in \ppoly$.


\subsection{Proof of Lemma~\ref{lemma:existence-of-nice-matrix}}

Let $\tau = 8 \sqrt{2} \RECON$, then let $\Ctauup$ and $\Ctaudown$
represent the top $\frac{\numobs}{2}$ rows and the bottom
$\frac{\numobs}{2}$ rows of $\Ctaumat$, respectively. We first prove
an auxiliary lemma about the random matrix $\Cmat_\tau$.

\begin{lemma}\label{lemma:nice-random-matrix}
There are universal positive constants $\UNICON_1,\UNICON_2$ such
that, with probability at least \mbox{$1-\UNICON_1\exp(-\UNICON_2\numobs)$,}
the random matrix $\Cmat_\tau$ has the following properties:
\begin{itemize}
\item It satisfies the RE
condition~\eqref{EqnDefnRE} with parameter~$\RECON$.
\item It satisfies the following upper bounds:
\begin{align}
\label{EqnSuperAnnoyStrong}
  \frac{\|\Ctauup \theta\|_2^2}{\numobs} \leq
  \frac{1}{3}\|\theta\|_2^2 \quad \mbox{and} \quad \frac{\|\Ctaudown
    \theta\|_2^2}{\numobs} \leq \left(24\RECON\right)^2\|\theta\|_2^2
  \leq \frac{1}{2}\|\theta\|_2^2 \quad \mbox{for all $\theta \in
    \Ball_0(2 \kdim)$}
\end{align}
\end{itemize}
\end{lemma}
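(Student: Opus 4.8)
The plan is to split $\Cmat_\tau$ into its deterministic top block (assembled from copies of $M$) and its random bottom block $\tau R$, bound each separately, and recombine, using that for every $\theta$ we have the orthogonal decomposition $\tfrac{1}{\numobs}\|\Cmat_\tau\theta\|_2^2 = \tfrac{1}{\numobs}\|\Ctauup\theta\|_2^2 + \tfrac{\tau^2}{\numobs}\|R\theta\|_2^2$ with both summands nonnegative. The top term is completely deterministic, so the only randomness is in the Gaussian block.

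\emph{Step 1: the deterministic top block.} Unwinding the construction, $\Ctauup$ consists of $\numobs/(6\kdim)$ copies of $\tfrac12[\Akmat\ 0]$, and $\Akmat$ is block-diagonal with $\tdim$ copies of $\sqrt{\tdim}\,M$; since $\kdim=\tdim(\mdim/3+\pdim)$ this collapses to $\tfrac{1}{\numobs}\|\Ctauup\theta\|_2^2 = \tfrac{1}{24(\mdim/3+\pdim)}\sum_{i=1}^{\tdim}\|M\theta_i\|_2^2$, where $\theta_i$ is the $i$-th length-$4\pdim$ block of $\theta$. Using the explicit structure of $M=\big[\begin{smallmatrix}A&0\\B&C\end{smallmatrix}\big]$ --- each column of $A$ has exactly three ones, each column of $B$ exactly two, and $C$ is a signed identity --- I would bound $\|Mu\|_2^2\le\kappa_\mdim\|u\|_2^2$ for all $u$, with $\kappa_\mdim = O(\mdim^2)$ (e.g.\ via $\|A\|_{\mathrm{op}}^2\le\|A\|_1\|A\|_\infty\le 3\binom{\mdim-1}{2}$ plus a crude estimate of the $B,C$ parts). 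Because $\kappa_\mdim=O(\mdim^2)$ while $\mdim/3+\pdim=\Theta(\mdim^3)$, the prefactor $\kappa_\mdim/(24(\mdim/3+\pdim))$ is below $\tfrac13$ for every $\mdim\ge 3$, which gives $0\le\tfrac{1}{\numobs}\|\Ctauup\theta\|_2^2\le\tfrac13\|\theta\|_2^2$ for \emph{all} $\theta$, with no appeal to sparsity or randomness.

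\emph{Step 2: the random bottom block.} Here I would invoke the standard restricted eigenvalue / restricted isometry estimates for a Gaussian design $R$ with i.i.d.\ $N(0,I_\usedim)$ rows, $\numobs/2$ of them. For the upper bound, a $\chi^2$ tail inequality plus a union bound over the $\binom{\usedim}{2\kdim}$ supports of size $2\kdim$ gives, with probability $1-\UNICON_1 e^{-\UNICON_2\numobs}$ and provided $\numobs$ exceeds a constant multiple of $\kdim\log\usedim$, that $\sup\{\|R\theta\|_2^2/(\numobs/2):\theta\in\Ball_0(2\kdim),\ \|\theta\|_2=1\}\le 9$; multiplying by $\tau^2/2 = 64\RECON^2$ yields $\tfrac{1}{\numobs}\|\Ctaudown\theta\|_2^2\le 576\RECON^2\|\theta\|_2^2 = (24\RECON)^2\|\theta\|_2^2$, and $(24\RECON)^2\le\tfrac12$ is exactly where the hypothesis $\RECON<\tfrac{1}{24\sqrt2}$ is consumed. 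For the lower (RE) bound, I would use Gordon's comparison inequality: on the cone $\bigcup_{|S|=\kdim}\ConeSet(S)$ one has $\|\theta\|_1\le 4\sqrt{\kdim}\,\|\theta\|_2$, so the Gaussian width of $\{\theta\in\bigcup_{|S|=\kdim}\ConeSet(S):\|\theta\|_2=1\}$ is $O(\sqrt{\kdim\log\usedim})$, whence for $\numobs$ a sufficiently large constant times $\kdim\log\usedim$ the infimum of $\|R\theta\|_2/\sqrt{\numobs/2}$ over this set is at least an absolute constant $c$, again on an event of probability $1-\UNICON_1 e^{-\UNICON_2\numobs}$.

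\emph{Step 3: assembly.} Since $\tfrac{1}{\numobs}\|\Cmat_\tau\theta\|_2^2\ge\tfrac{1}{\numobs}\|\Ctaudown\theta\|_2^2$, the lower bound of Step 2 gives $\tfrac{1}{\numobs}\|\Cmat_\tau\theta\|_2^2\ge\RECON\|\theta\|_2^2$ on the cone once the constants are lined up (this is where $\tau = 8\sqrt2\RECON$ enters); in particular one should check the cone directions on which the deterministic block identically vanishes --- those supported beyond the first $4\pdim\tdim$ coordinates, since $\Akmat$ is padded with zero columns --- are still controlled, and they are, precisely by the Gaussian block. The two upper bounds are Steps 1 and 2 verbatim, and intersecting the $O(1)$ exceptional events keeps the overall failure probability at $\UNICON_1 e^{-\UNICON_2\numobs}$. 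I expect the main obstacle to be the quantitatively tight form of the Gaussian estimate in Step 2: one needs the restricted eigenvalue of $\tau R$ over $\kdim$-sparse cones to be large enough, after scaling by $\tau^2$, to dominate $\RECON\|\theta\|_2^2$ while the corresponding upper bound stays below $(24\RECON)^2$ --- so the whole argument rests on a careful, constant-tracked version of the Gaussian RE/RIP bound, with the combinatorial analysis of $M$ in Step 1 being routine once its block structure is unpacked.
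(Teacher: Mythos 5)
Your decomposition and overall proof strategy are the same as the paper's; the only genuine variation is in Step~1, and your Step~3 caveat about constant-tracking turns out to be well founded. For the top block, the paper does not estimate $\|M\|$ via $\|A\|_1\|A\|_\infty$ as you propose, but instead uses the Frobenius-style bound $\|M u\|_2^2 \le 8\pdim\|u\|_2^2$, where $8\pdim$ is the number of nonzero $\pm1$ entries of $M$ (three per column of $A$, two per column of $B$, three per column of $C$). Your estimate is asymptotically sharper, but the paper's has the virtue that $8\pdim\tdim/(24\kdim)$ collapses algebraically to $\pdim/(3(\mdim/3+\pdim)) < 1/3$ without any appeal to $\pdim = \Theta(\mdim^3)$. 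For Steps~2 and 3 your plan matches the paper exactly: Gaussian restricted singular-value bounds (union bound over $2\kdim$-supports for the upper bound, Gordon/Raskutti for the RE lower bound), with the nonnegativity of the top block letting the Gaussian block alone carry the RE condition; your explicit remark about cone directions supported outside the first $4\pdim\tdim$ coordinates is correct and makes explicit something the paper leaves implicit. However, your worry is warranted: with the paper's Gordon-type constant $\|R\theta\|_2/\sqrt{\numobs/2} \ge 1/8$ and $\tau = 8\sqrt{2}\,\RECON$, what actually falls out is $\tfrac{1}{\numobs}\|\Cmat_\tau\theta\|_2^2 \ge \RECON^2\|\theta\|_2^2$, i.e.\ RE with parameter $\RECON^2$, which for $\RECON < 1$ is strictly weaker than the claimed parameter $\RECON$. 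The paper's own displayed calculation lands on exactly this $\RECON^2 \numobs\|\theta\|_2^2$, so the shortfall is inherited from the source rather than introduced by you --- but it means that ``lining up the constants'' in the way you gesture at is not as automatic as it might appear.
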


\begin{proof}
Dealing first with the upper block of the matrix $C$, we have
\begin{align}
\label{eqn:upper-bound-X-by-A}
\norm{ \Ctauup\theta}_2^2 & = \frac{1}{4} \, \frac{\numobs}{2} \,
\frac{1}{3 \kdim} \, \norm{\Akmat \theta}_2^2.
\end{align}
Recall that the diagonal blocks of $\Akmat$ are matrices $M$.  The
following claim provides an upper bound on its singular values:
\begin{claim}
\label{lemma:restricted-eigenvalue}
For any vector $\uvec \in \R^{4 \pdim}$, we have $\| M \uvec \|_2^2
\leq 8 \pdim \|\uvec\|_2^2$.
\end{claim}
\begin{proof}
Since the entries of $M$ all belong to $\{-1,0,1\}$, if we let $M_{i}$
denote the $i^{th}$ row of $M$, then
\begin{align*}
  (M_{i}\uvec)^2 \leq \big( \sum_{j \, \mid \, |M_{ij}|=1} |\uvec_j|
  \big)^2 \leq \big| \{j \, \mid \, |M_{ij}|=1 \} \big| \;
  \|\uvec\|_2^2.
\end{align*}
Summing over all indices $i =1,2,\dots, \mdim +3 \pdim$, we find that
\begin{align*}
  \| M\uvec\|^2 \leq \big| \{(i,j) \, \mid |M_{ij}|=1 \} \big| \;
  \|\uvec\|_2^2.
\end{align*}
Since there are $8 \pdim$ nonzero entries in $M$, the claim
follows.
\end{proof}

Returning to the main thread, Claim~\ref{lemma:restricted-eigenvalue}
implies that $\Akmat$ has singular values bounded by $\sqrt{8 \tdim
  \pdim}$.  Putting together the pieces, we have
\begin{align}
\label{eqn:upper-bound-A-by-M}
  \| \Akmat \theta \|_2^2 & \leq \; 8 \, \tdim \, \pdim \: \|\theta\|_2^2 \;
  \leq \; 8 \, \kdim \|\theta \|_2^2,
\end{align}
where the final inequality follows since $\tdim \pdim \leq \tdim
(\frac{\mdim}{3} + \pdim) = \kdim$. In conjunction,
inequalities~\eqref{eqn:upper-bound-X-by-A}
and~\eqref{eqn:upper-bound-A-by-M} imply that $\norm{
  \Ctauup\theta}_2^2 \leq \numobs/3 \, \|\theta \|_2^2$.

Our next step is to prove that $\|\Ctaudown \theta\|_2^2 \leq
\numobs/2 \, \|\theta\|_2^2$ with high probability.  Since $\numobs
\succsim \kdim \log \usedim$, standard results for Gaussian random
matrices (see Lemma~\ref{lemma:eigenvalue-lower-upper-bound} in
Appendix~\ref{AppGaussRandMat}) imply that
\begin{align}
\label{eqn:eigenvalue-upper-bound-for-gaussian-part}
\norm{ \Ctaudown\theta}_2^2 & \leq \tau^2 \, (3 \sqrt{\numobs/2}
\norm{\theta}_2)^2 \; \stackrel{(i)}{\leq} \; \frac{\numobs}{2}
\|\theta\|_2^2 \qquad \mbox{for all $\theta \in \Ball_0(2 \kdim)$}
\end{align}
with probability at least $1 - \UNICON_1\exp(-\UNICON_2\numobs)$, where inequality (i) follows since $\tau = 8
\sqrt{2} \RECON \leq 1/3$. Combining the upper bounds for $\Ctauup$ and
the upper bound for $\Ctaudown$, we find that the normalization
conditions~\eqref{EqnSuperAnnoyStrong} hold with high probability.

It remains to show that the RE condition~\eqref{EqnDefnRE} holds with
high probability.  By Lemma~\ref{lemma:eigenvalue-lower-upper-bound},
we have
\begin{align*}
\| \Ctaumat \theta\|_2^2 & \geq \| \Ctaudown \theta \|_2^2 \; \geq \; \tau^2
\big(\frac{\sqrt{\numobs/2}}{8} \|\theta\|_2\big)^2 \; = \; \RECON^2
\, \numobs \, \|\theta\|_2^2,
\end{align*}
a bound that holds uniformly for all $\theta \in \bigcup_{\substack{
    \PlainSset \subset\{1,\dots, \usedim \} \\ |\PlainSset|= \kdim}}
\ConeSet(\PlainSset)$, as required by the condition~\eqref{EqnDefnRE}.
\end{proof}

According to Lemma~\ref{lemma:nice-random-matrix}, there is a
realization of the random matrix $R$ such that the matrix $\Cmat_\tau$
satisfies both the condition~\eqref{EqnSuperAnnoyStrong}, and the RE
condition~\eqref{EqnDefnRE} with parameter~$\RECON$. We take this
realization of $R$ to define a concrete realization of $\Cmat_\tau$.
Consequently, the RE constant of this matrix, namely
$\RECON(\Cmat_\tau)$, satisfies $\RECON(\Cmat_\tau) \geq \RECON$.
For arbitrary matrices $X,Y\in \R^{\numobs\times \usedim}$, it is straightforward that
\begin{align}
\label{eqn:RE-constant-continuity}
  |\RECON({X}) - \RECON(Y)| \leq \opnorm{X - Y},
\end{align}
showing that the function $X \mapsto \RECON(X)$ is a Lipschitz
function with parameter $1$, and hence continuous. Consequently,
$\RECON(\Cmat_x)$ is a continous function of $x$.
 We also claim that
it satisfies the condition
\begin{align}
\label{eqn:zero-recon-claim}
  \RECON(\Cmat_0) = 0,
\end{align}
a claim proved at the end of this section.  Based on the continuity
property~\eqref{eqn:RE-constant-continuity} and the initial
condition~\eqref{eqn:zero-recon-claim}, there is a constant $\tau'\in
(0, \tau]$ such that $\RECON(\Cmat_{\tau'})= \RECON$.  Since
  $\Cmat_\tau$ satisfies the normalization
  condition~\eqref{EqnSuperAnnoyStrong} and $\tau'\leq \tau$, we have
\begin{align}
\label{eqn:normalization-transfer}
  \frac{\|\Cmat_{\tau'} \theta\|_2^2}{\numobs} \leq \frac{\|\Ctaumat
    \theta\|_2^2}{\numobs} \leq \frac{5}{6}\|\theta\|_2^2 \quad
  \mbox{and} \quad \frac{\|\Cmat_{\tau'}^\downarrow
    \theta\|_2^2}{\numobs} \leq \frac{\|\Ctaudown
    \theta\|_2^2}{\numobs} \leq (24\RECON)^2\|\theta\|_2^2 \quad
  \mbox{for all $\theta \in \Ball_0(2 \kdim)$}.
\end{align}

Next we consider the quantized matrix
$\floor{\Cmat_{\tau'}}_\ldim$. Since the quantization operator
approximates every entry of $\Cmat_{\tau'}$ to precision $2^{-\ldim}$,
we have
\begin{align}
\label{eqn:matrix-quantization-error}
  \opnorm{\floor{\Cmat_{\tau'}}_\ldim - \Cmat_{\tau'}} \leq
  \frobnorm{\floor{\Cmat_{\tau'}}_\ldim - \Cmat_{\tau'}} \leq
  2^{-\ldim}\sqrt{\numobs\usedim}.
\end{align}
Combining this inequality with the Lipschitz
condition~\eqref{eqn:RE-constant-continuity}, we find that
\begin{align*}
|\RECON(\floor{\Cmat_{\tau'}}_\ldim) - \RECON| \leq
2^{-\ldim}\sqrt{\numobs\usedim},
\end{align*}
showing that the quantized version satisfies the stated RE condition
as long as $\ldim \geq \log(\sqrt{\numobs\usedim}/\supersmall)$.
Turning to the normalization conditions,
inequality~\eqref{eqn:normalization-transfer} and
inequality~\eqref{eqn:matrix-quantization-error} imply that
\begin{align*}
  \frac{\|\floor{\Cmat_{\tau'}}_\ldim \theta\|_2}{\sqrt{\numobs}} \leq
  \frac{\|\Cmat_{\tau'}\theta\|_2 +
    2^{-\ldim}\sqrt{\numobs\usedim}\norm{\theta}_2}{\sqrt{\numobs}}
  \leq (\sqrt{5/6} + 2^{-\ldim}\sqrt{\usedim}) \norm{\theta}_2 \quad
  \mbox{and}\\ \frac{\|\floor{\Cmat_{\tau'}^\downarrow}_\ldim
    \theta\|_2}{\sqrt{\numobs}} \leq
  \frac{\|\Cmat_{\tau'}^\downarrow\theta\|_2 +
    2^{-\ldim}\sqrt{\numobs\usedim}\norm{\theta}_2}{\sqrt{\numobs}}
  \leq (24\RECON + 2^{-\ldim}\sqrt{\usedim}) \norm{\theta}_2 \quad
  \mbox{for all $\theta \in \Ball_0(2 \kdim)$}.
\end{align*}
It is straightforward to verify that if $\ldim \geq
\max\{\log(12\sqrt{\usedim}),\log(\sqrt{\numobs\usedim}/\supersmall)\}$,
then the normalization
conditions~\eqref{eqn:design-matrix-normalization-property} stated in
the lemma are also satisfied.

\paragraph{Proof of equation~\protect\eqref{eqn:zero-recon-claim}:}

Notice that $\rank(\Cmat_0) = \rank(\Akmat) \leq 3\kdim$, but the
number of nonzero columns of $\Cmat_0$ is greater than $3\kdim$.
Hence, there is a $(3\kdim+1)$-sparse nonzero vector $\thetasparse$
such that
\begin{align}
\label{eqn:zero-recon-evidence-1}
    \Cmat_0 \thetasparse = 0.
\end{align}
Let $\PlainSset$ be a set of indices that correspond to the $\kdim$
largest entries of $\thetasparse$ in absolute value.  Then
$|\PlainSset| = \kdim$ and $\norm{\thetasparse_{\PlainSset^c}}_1 \leq
\frac{2k+1}{k}\norm{\thetasparse_{\PlainSset}}_1$. The later
inequality implies that
\begin{align}
\label{eqn:zero-recon-evidence-2}
    |\PlainSset| = \kdim ~~~~\mbox{and}~~~~ \thetasparse\in
    \ConeSet(\PlainSset).
\end{align}
Combining expressions~\eqref{eqn:zero-recon-evidence-1}
and~\eqref{eqn:zero-recon-evidence-2} yields that $\RECON(\Cmat_0)=0$.


\subsection{Proof of Lemma~\ref{lemma:lower-bound-by-p1-hardness}}
\label{AppLemP1Hard}

\newcommand{\trunlevel}{\mu}

Recall that the noise vector $w$ follows the normal distribution
$N(0,\sigma^2I_{n\times n})$.  Consequently, if we define the event
$\event_1 \defn \{ \|w\|_\infty \leq \trunlevel \}$, where
\begin{align*}
  \trunlevel \defeq \max\left\{ \frac{2\numobs}{\sqrt{2 \pi}}, \sqrt{2\log(2 h(m))} \right\},
\end{align*}
then standard tail
bounds and the union bound imply that
\begin{align*}
  \mprob[\event_1^c] \leq \numobs \, \big \{ \frac{2}{\sqrt{2 \pi}
      \trunlevel} e^{-\trunlevel^2/2} \big \} \leq \frac{1}{2 h(m)}.
\end{align*}

Let $\event_2$ denote the event that $\PLAINSOLVER_i$ being $(\kdim/\tdim)$-sparse
and that $\norm{M\PLAINSOLVER_i - M\uvec^*}_2 < 1/2$.
We split the probability $\mprob[\event_2]$ into two terms. We find that
\begin{align*}
  \mprob[\event_2] &= \int_{\|z\|_\infty \leq \trunlevel}\mprob[\event_1 | w = z]\phi(z; 0,\sigma^2I_{n\times n}) {\rm d}z + \mprob[\event_2 | \event_1^c] \mprob[\event_1^c]\\
  &= \int_{\|z\|_\infty \leq \trunlevel}\mprob[\event_1 | w = z]\phi(z; 0,\sigma^2I_{n\times n}) {\rm d}z +  \frac{1}{2 h(m)}
\end{align*}
where  $\phi(z; 0,\sigma^2I_{n\times n})$ denotes the density of the $N(0,\sigma^2I_{n\times n})$ distribution.
Consequently, there must be some specific value $\norm{w_0}_\infty \leq \trunlevel$ such that
\begin{align}\label{eqn:split-success-event}
 \mprob[\event_2] \leq \mprob[\event_2 | w = w_0] +  \frac{1}{2 h(m)} .
\end{align}

Let $\solver^*_i$ be the solver that uses the deterministic argument
$w_0$ rather than drawing $w$ randomly according to the specified
procedure.  The remaining steps followed by $\solver^*_i$ are exactly
the same as those of $\PLAINSOLVER_i$.  We now demonstrate that the
solver $\solver^*_i$ has polynomial complexity in terms of code length
and running time:

\paragraph{Code length:} We begin by encoding the
design matrix $\Xmat$ and fixed vectors $(\TSTARI,w_0)$ into the
program of the solver $\solver^*_i$. Recall that both $\Xmat$ and
$\TSTARI$ are discretized to $2^{-\ldim}$ precision. For any discrete
value $x$ of $2^{-\ldim}$ precision, encoding it takes at most
$\log(|x|+1) + \ldim$ bits. Recalling our choice~\eqref{EqnLchoice} of
$\ldim$, is straightforward to verify that $\ldim = \order(\log({\rm
  poly}(\numobs,\usedim,\kdim))+\log(1/\supersmall))$.  Moreover, the
components of the matrix $\Xmat$ and vector $\TSTARI$ are adequately
controlled: in particular, by
inequality~\eqref{eqn:design-matrix-normalization-property},
equation~\eqref{EqnLchoice} and the definition of $\TSTARI$, each such entry $x$ has
an encoding bounded by
$\log(|x|+1)= \order({\rm
  poly}(\numobs,\usedim,\kdim))+\log(1/\supersmall))$, Since
$\log(1/\supersmall) = \pFunb(\numobs,\usedim,\kdim)$ is a polynomial
functions of $m$, we find that both $\Xmat$ and $\TSTARI$ have
polynomial code length.

Now the noise vector $w_0$  is involved in computing the
response
vector
\begin{align*}
  \yvecprime & = \begin{bmatrix} \Xup \: \IOPTI \\ \Xdown \: \TSTARI
    \end{bmatrix} + w_0.
\end{align*}
Since the estimator $\thetahat$ takes
$\floor{\yvecprime}_{\pFunb(\numobs,\usedim,\kdim)}$ as input, it
suffices to quantize $w_0$ sufficiently finely so as to ensure that
the quantized response
$\floor{\yvecprime}_{\pFunb(\numobs,\usedim,\kdim)}$ is not altered by
the quantization.  Note that the components of $\Xmat$, $\TSTARI$ and
$\IOPTI$ all have $2^{-\ldim}$ precision.  Consequently, their products
have at most $2^{-2\ldim}$ precision, and we have
\begin{align*}
  \left \lfloor \begin{bmatrix} \Xup \: \IOPTI \\ \Xdown \: \TSTARI
    \end{bmatrix} + w_0 \right\rfloor_{\pFunb} =
\left \lfloor \begin{bmatrix} \Xup \: \IOPTI \\ \Xdown \: \TSTARI
    \end{bmatrix} + \floor{w_0}_{\max\{2\ldim, \pFunb\}} \right\rfloor_{\pFunb},
\end{align*}
showing that it suffices to quantize the noise $w_0$ at level
$\floor{\cdot}_{\max\{2\ldim, \pFunb\}}$. On the other hand, by construction,
the absolute values of $w_0$ are uniformly bounded by $\trunlevel$. Hence, the
vector $w_0$ can also be encoded in polynomial code length.

Finally, we have assumed that code length of $\thetahat$ is bounded
by $\Lint$, and observed that the program that constructs $\yvecprime$ is of
constant code length.  Consequently, the total code length is bounded
by a polynomial function of $\mdim$, which we denote by
$\pfuna(\mdim)$.

\paragraph{Running time:}
Since the response vector $\yvecprime$ can be constructed from $(M,
M\uvec^*, \Xmat, \TSTARI, w_0)$ in polynomial time and the solver
$\thetahat$ terminates in polynomial time (bounded by
$\pFunc(\size(\Xmat,\yvecprime; \pFunb))$), we conclude that
$\solver^*_i$ also terminates in polynomial time, say upper bounded by
the polynomial $\pfunb(\mdim)$.\\


\noindent To complete the argument, we utilize the hardness result of Lemma~\ref{lemma:probablistic-hardness}.
Indeed, this lemma guarantees that a polynomial-complexity solver cannot achieve probability of success greater than $1/\pfunc(\mdim)$,
which yields
\begin{align*}
 \mprob[\event_2 | w = w_0] \leq  \frac{1}{2 h(m)}.
\end{align*}
Combining this upper bound with inequality~\eqref{eqn:split-success-event} completes
the proof.


\subsection{Proof of Lemma~\ref{lemma:sparse-estimator-with-high-probability}}
\label{AppLemSparseEst}

Let $\qdensprime$
and $\qdensdoub$ denote the probability density functions of
$\yvecprime$ and $\ydoub$. With this notation, we have
\begin{align*}
\mprob \big[ \PLAINSOLVER_i \in \Ball_0(\kdim/\tdim) \big] & =
\int_{\R^\numobs} \Ind \big[\thetahat(\Xmat, z) \in
  \Ball_0(\kdim/\tdim) \big] \: \qdensprime(z) dz \\
& = \int_{\R^\numobs} \frac{\qdensprime(z)}{\qdensdoub(z)} \Ind
\big[\thetahat(\Xmat, z) \in \Ball_0(\kdim/\tdim) \big] \:
\qdensdoub(z) dz \\
& = \E \Biggr[ \frac{\qdensprime(\ydoub)}{\qdensdoub(\ydoub)} \; \Ind
  \big[\AUXSOLVER_i \in \Ball_0(\kdim/\tdim) \big] \Biggr] \\
& \geq e^{-1} \, \mprob \Big[\AUXSOLVER_i \in \Ball_0(\kdim/\tdim)
  \mbox{ and } \frac{\qdensprime(\ydoub)}{\qdensdoub(\ydoub)} \geq
  e^{-1} \Big] \\
& \geq e^{-1} \; \Big \{ \mprob \big[\AUXSOLVER_i \in
  \Ball_0(\kdim/\tdim) \big] - \mprob \Big[
  \frac{\qdensprime(\ydoub)}{ \qdensprime(\ydoub)} < e^{-1} \Big] \Big
\}.
\end{align*}
We claim that
\begin{align}\label{eqn:claim-rare-event-bound}
 \mprob \Big[ \frac{\qdensprime(\ydoub)}{\qdensdoub(\ydoub)} < e^{-1}
   \Big] & \leq \frac{25\RECON \rdim \sqrt{\numobs}}{\sigma}.
\end{align}
Deferring the proof of upper bound~\eqref{eqn:claim-rare-event-bound}, we now focus
on its consequence. If $\mprob \big[\AUXSOLVER_i \in \Ball_0(\kdim/\tdim) \big] \geq
\frac{\tdim}{2 \kdim + 2 \tdim}$, then we are guaranteed that
\begin{align}
\label{eqn:prob-sparse-lower-bound}
  \mprob \big[\PLAINSOLVER_i \in \Ball_0(\kdim/\tdim) \big] & \geq
  e^{-1} \, \Big( \frac{\tdim}{2 \kdim + 2 \tdim} - \frac{25\RECON \rdim
    \sqrt{\numobs}}{\sigma}\Big).
\end{align}

Finally, let $\Tset \subseteq \{1, \ldots, \tdim \}$
be the subset of indices for which $\AUXSOLVER_i \in
\Ball_0(\kdim/\tdim)$ with probability at least $\frac{\tdim}{2 \kdim
  + 2 \tdim}$.  Using $\thetahat_i$ as a shorthand for
$\thetahat_i(\Xmat,y)$, note that $\AUXSOLVER_i$ and
$\thetahat_i/\rho$ share the same distribution, and hence
\begin{align}
\label{EqnEquiv}
\mprob \big[\AUXSOLVER_i \in \Ball_0(\kdim/\tdim) \big] =
\mprob\big[\thetahat_i \in \Ball_0(\kdim/\tdim) \big].
\end{align}
Since $\thetahat \in \Ball_0(\kdim)$ by construction, we have
\begin{align}
\kdim & \geq \|\thetahat\|_0 \; \geq \; \sum_{i=1}^\tdim
\|\thetahat_i\|_0 \geq \big( \frac{\kdim}{\tdim} + 1 \big) \,
\sum_{i=1}^\tdim \Ind \big[\thetahat_i \notin \Ball_0(\kdim/\tdim)
  \big] \; = \; \frac{\kdim + \tdim}{\tdim} \Big \{ \tdim - \sum_{i =
  1}^\tdim \Ind \big[\thetahat_i \in \Ball_0(\kdim/\tdim) \big] \Big
\}.
\end{align}
Following some algebra, we find that $\sum_{i=1}^\tdim \, \Ind
\big[\thetahat_i \in \Ball_0(\kdim/\tdim) \big] \geq
\frac{\tdim^2}{\kdim + \tdim}$, and hence
\begin{align*}
\sum_{i=1}^\tdim \mprob[\thetahat_i \in \Ball_0(\kdim/\tdim)] & \geq
\frac{\tdim^2}{\kdim+\tdim}.
\end{align*}
Let $N$ be the number of indices such that $\mprob[\thetahat_i \in
  \Ball_0(\kdim/\tdim)] \geq \frac{\tdim}{2 \kdim+2 \tdim}$.  By
definition, there are $\tdim - N$ indices for which
$\mprob[\thetahat_i \in \Ball_0(\kdim/\tdim)] < \frac{\tdim}{2 \kdim +
  2\tdim}$, and hence
\begin{align*}
\frac{\tdim^2}{\kdim + \tdim} \leq \sum_{i=1}^\tdim \mprob
\big[\thetahat_i \in \Ball_0(\kdim/\tdim) \big] \leq N + (\tdim - N)\:
\frac{\tdim}{2 \kdim+2 \tdim},
\end{align*}
which implies that $N \geq \frac{\tdim^2}{2 \kdim+ \tdim}$, as
claimed.  By the equivalence~\eqref{EqnEquiv}, we have $N = |\Tset|$,
so that the proof is complete.

\paragraph{Proof of inequality~\eqref{eqn:claim-rare-event-bound}} Recall that $\yvecprime$ and $\ydoub$ differ in distribution only in
their last $\numobs/2$ entries, denoted by $\BOT{\yvecprime}$ and
$\BOT{\ydoub}$ respectively.  Since $\BOT{\yvecprime} \sim N( \Xdown
\TSTARI, \sigma^2 I)$ and $\BOT{\ydoub} \sim N(\Xdown \IOPTI,
\sigma^2I)$, we have
\begin{align}
\label{EqnHanaSleep}
\frac{\qdensprime(\ydoub)}{\qdensdoub(\ydoub)} & = \exp
\Big( - \frac{(\Xdown \TSTARI - \BOT{\ydoub})^2}{2 \sigma^2} +
\frac{(\Xdown \IOPTI - \BOT{\ydoub})^2}{2\sigma^2}\Big).
\end{align}
Define the scalar $a \defn \norm{\Xdown(\IOPTI - \TSTARI)}_2/\sigma$,
and define the event $\Event \defn \{ Z \leq \frac{a}{2} - \frac{1}{a} \}$
where $Z \sim N(0,1)$ is a standard normal variate.  From the
representation~\eqref{EqnHanaSleep}, some algebra shows that the probability that the upper
bound $\frac{\qdensdoub(\yvecprime)}{\qdensprime(\yvecprime)} \leq
e^{-1}$ holds is equal to the probability that the event $\Event$ holds.  Letting $\Phi$
denote the CDF of a standard normal variate, we have $\mprob[\event] =
\Phi \big(\frac{a}{2} - \frac{1}{a}\big)$.  It can be verified
that $\Phi \big(\frac{a}{2} - \frac{1}{a} \big) \leq a$ for all $a
\geq 0$, whence
\begin{align*}
  \mprob[\event] \leq a & = \frac{\|\Xdown( \IOPTI -
    \TSTARI)\|_2}{\sigma} \; \leq \; \frac{25\RECON \sqrt{\numobs}
    \, \| \IOPTI - \TSTARI\|_2}{\sigma},
\end{align*}
where the last step uses
inequality~\eqref{eqn:design-matrix-normalization-property}. Since
$\IOPTI$ and $\TSTARI$ differ only in one subvector---more precisely,
by $\rho \uvecstar$---we have
\begin{align*}
  \mprob[\event] & \leq \frac{25 \RECON \sqrt{\numobs} \|\rho
    \uvec^*\|_2}{\sigma} \leq \frac{25\RECON \rdim
    \sqrt{\numobs}}{\sigma},
\end{align*}
where the last step follows since $\norm{\rho \uvec^*}_2 \leq \rdim$.


\subsection{Proof of Lemma~\ref{lemma:error-control}}
\label{AppLemErrorControl}

Letting $\qdensprime$ and $\qdensdoub$ denote the probability density
functions of $\yvecprime$ and $\ydoub$, respectively, we have
\begin{align}
  \E\big[\| M \AUXSOLVER_i - M \uvec^*\|_2^2\big] & =
  \int_{\R^\numobs} \norms{M \thetahat_i(\Xmat,z) - M \uvec^*}_2^2 \:
  \qdensdoub(z) dz \nonumber \\
& = \int_{\R^\numobs}
  \frac{\qdensdoub(z)}{\qdensprime(z)}\norms{M\thetahat_i(X,z) -
    M\uvec^*}_2^2 \: \qdensprime(z) dz \nonumber \\
& = \E \Big[\frac{\qdensdoub(\yvecprime)}{\qdensprime(\yvecprime)} \:
    \|M \PLAINSOLVER_i - M \uvec^* \|^2\Big] \nonumber \\
& \geq e^{-1} \: \E \Big[ \|M \PLAINSOLVER_i - M \uvec^*\|_2^2 \, \mid
    \, \frac{\qdensdoub(\yvecprime)}{\qdensprime(\yvecprime)}\geq
    e^{-1}\Big] \; \mprob
  \Big[\frac{\qdensdoub(\yvecprime)}{\qdensprime(\yvecprime)} \geq
    e^{-1} \Big].
\label{eqn:control-error-using-a-proxy-distribution}
\end{align}
Following precisely the proof of inequality~\eqref{eqn:claim-rare-event-bound}, we can prove
the upper bound
\begin{align*}
 \mprob \Big[ \frac{\qdensdoub(\yvecprime)}{\qdensprime(\yvecprime)} < e^{-1}
   \Big] & \leq \frac{25\RECON \rdim \sqrt{\numobs}}{\sigma}.
\end{align*}
As a consequence, we have
\begin{align*}
  \mprob \Big[ \frac{\qdensdoub(\yvecprime)}{\qdensprime(\yvecprime)} \geq e^{-1} \Big] \geq 1 - \frac{25\RECON \rdim \sqrt{\numobs}}{\sigma} \geq \frac{1}{2},
\end{align*}
where the last inequality follows since $\frac{\RECON \rdim \sqrt{\numobs}}{\sigma} \leq \frac{1}{50}$ by assumption.
Plugging this
lower bound into
inequality~\eqref{eqn:control-error-using-a-proxy-distribution} yields
\begin{align*}
\E \Big[ \|M \PLAINSOLVER_i - M\uvec^* \|_2^2 \mid \frac{\qdensdoub(\yvecprime)}{\qdensprime(\yvecprime)} \geq e^{-1} \Big] &
\leq 2 e \, \E \big[\|M \AUXSOLVER_i - M \uvec^*\|_2^2 \big].
\end{align*}
Applying Markov's inequality yields
\begin{align*}
\mprob \Big[\|M \PLAINSOLVER_i - M \uvec^*\|_2^2 \leq \frac{
    \sigma}{25 \RECON \rdim \sqrt{\numobs}} \cdot 2 e\: \E \big[ \|M \AUXSOLVER_i
    - M\uvec^*\|_2^2 \big] \mid \frac{\qdensdoub(\yvecprime)}{\qdensprime(\yvecprime)} \geq e^{-1} \Big] & \geq 1 - \frac{25\RECON
  \rdim \sqrt{\numobs}}{\sigma}.
\end{align*}
Consequently, with probability $(1 - \frac{25\RECON
  r\sqrt{n}}{\sigma})^2 \geq 1 - \frac{50\RECON r\sqrt{n}}{\sigma}$, we
have
\begin{align*}
\|M \PLAINSOLVER_i - M \uvec^*\|_2^2 \leq \frac{2e \sigma}{25\RECON
  \rdim \sqrt{\numobs}} \: \E \big[\| M \AUXSOLVER_i - M \uvec^*\|_2^2
  \big],
\end{align*}
as claimed.


\section{Proof of Proposition~\ref{PropLassoThresh}}
\label{AppLassoThresh}

We begin by defining the error vectors $\PertHat \defn \thetaregparn -
\thetastar$ of the ordinary Lasso, as well as that $\PertTil \defn
\thetathr - \thetastar$ of the thresholded Lasso.  By construction,
the error vector $\PertTil$ is at most $2 \kdim$-sparse, so that the
normalization condition~\eqref{EqnSuperAnnoy} guarantees that
\begin{align}
\label{EqnSuperAnnoyBound}
\frac{\|\Xmat \PertTil\|_2^2}{\numobs} & \leq \|\PertTil\|_2^2.
\end{align}
The following lemma shows that the truncated Lasso is essentially as
good as the non-truncated Lasso:
\begin{lemma}
\label{LemAux}
The error of the truncated Lasso is bounded as $\|\PertTil\|_2^2 \leq
5 \|\PertHat\|_2^2$.
\end{lemma}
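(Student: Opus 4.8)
The plan is to reduce the claim to a deterministic, purely combinatorial statement about hard thresholding that makes no reference to the data or to the normalization/RE conditions. Write $S \defn \{ j : \thetastar_j \neq 0 \}$, so that $|S| \leq \kdim$ because $\thetastar \in \Ball_0(\kdim)$, and let $T$ be a set of $\kdim$ coordinates on which $\thetaregparn$ attains its $\kdim$ largest absolute values, chosen (resolving ties arbitrarily) so that the support of $\thetathr$ is contained in $T$; thus $\thetathr$ agrees with $\thetaregparn$ on $T$ and vanishes off $T$. Since both $\thetathr$ and $\thetastar$ are supported on $S \cup T$, the error vector of the thresholded Lasso splits cleanly by support as
\begin{align*}
\|\PertTil\|_2^2 \;=\; \sum_{j \in T} (\thetaregparn_j - \thetastar_j)^2 \;+\; \sum_{j \in S \setminus T} (\thetastar_j)^2 \;=\; \|\PertHat_T\|_2^2 + \|\thetastar_{S \setminus T}\|_2^2 \;\leq\; \|\PertHat\|_2^2 + \|\thetastar_{S \setminus T}\|_2^2 .
\end{align*}
So it suffices to control the ``missed mass'' $\|\thetastar_{S \setminus T}\|_2^2$; I expect to obtain $\|\thetastar_{S \setminus T}\|_2^2 \leq 2\|\PertHat\|_2^2$, which already yields $\|\PertTil\|_2^2 \leq 3\|\PertHat\|_2^2 \leq 5\|\PertHat\|_2^2$ with room to spare.

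To bound the missed mass, the key point is a counting observation: since $|T| = \kdim \geq |S|$, one has $|S \setminus T| \leq |T \setminus S|$, so there is an injection $\pi : S \setminus T \to T \setminus S$. For each $j \in S \setminus T$ the index $\pi(j)$ lies in $T$ while $j$ does not, so by the choice of $T$ we have $|\thetaregparn_{\pi(j)}| \geq |\thetaregparn_j|$; moreover $\thetastar_{\pi(j)} = 0$ because $\pi(j) \notin S$, hence $|\thetaregparn_{\pi(j)}| = |\PertHat_{\pi(j)}|$. Combining these facts with the triangle inequality gives, for every $j \in S \setminus T$,
\begin{align*}
|\thetastar_j| \;\leq\; |\thetaregparn_j| + |\thetaregparn_j - \thetastar_j| \;\leq\; |\thetaregparn_{\pi(j)}| + |\PertHat_j| \;=\; |\PertHat_{\pi(j)}| + |\PertHat_j| .
\end{align*}
Squaring, applying $(a+b)^2 \leq 2a^2 + 2b^2$, and summing over $j \in S \setminus T$ — while noting that the index sets $S \setminus T$ and $\pi(S \setminus T) \subseteq T \setminus S$ are disjoint, so the two resulting partial norms of $\PertHat$ add to at most $\|\PertHat\|_2^2$ — yields $\|\thetastar_{S \setminus T}\|_2^2 \leq 2\bigl(\|\PertHat_{S \setminus T}\|_2^2 + \|\PertHat_{\pi(S \setminus T)}\|_2^2\bigr) \leq 2\|\PertHat\|_2^2$. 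Substituting this into the previous display completes the argument.

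The only genuinely substantive step is this rearrangement/injection argument; everything else is a decomposition by support together with triangle inequalities and the trivial bound $\|\PertHat_T\|_2 \leq \|\PertHat\|_2$. The points to handle with a little care are the choice of $T$ in the presence of ties (any choice whose complement carries only entries no larger in absolute value than those in $T$ works, since one needs only $|\thetaregparn_{\pi(j)}| \geq |\thetaregparn_j|$ together with $\mathrm{supp}(\thetathr) \subseteq T$) and the bookkeeping that $S \setminus T$ and $\pi(S \setminus T)$ are disjoint. It is worth emphasizing that, unlike the main Lasso error bound, Lemma~\ref{LemAux} is entirely deterministic and uses neither the normalization condition~\eqref{EqnSuperAnnoy} nor the RE condition — it is precisely what lets the thresholded estimator $\thetathr \in \ALG(\kdim)$ inherit the prediction guarantee of the ordinary Lasso, up to the universal constant $5$, after the truncation step.
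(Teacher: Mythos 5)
Your proof is correct, and it relies on the same key combinatorial idea as the paper's own argument: pair each ``good'' coordinate of $\thetastar$ discarded by the truncation with a ``kept bad'' coordinate of $\thetaregparn$, and exploit the defining property of hard thresholding that the kept entry is at least as large in magnitude. The bookkeeping is organized slightly differently: the paper partitions the coordinates into the pairs $\{i_s, j_s\}$ together with the remainder, and handles each pair by a two-case analysis on whether $|(\thetaregparn)_{i_s} - \thetastar_{i_s}| \leq \tfrac{1}{2}|\thetastar_{i_s}|$, which yields the constant $5$; you instead split by support as $\|\PertTil\|_2^2 = \|\PertHat_T\|_2^2 + \|\thetastar_{S\setminus T}\|_2^2$, bound the first term trivially, and control the missed mass by the triangle inequality together with $(a+b)^2 \leq 2a^2 + 2b^2$, which cleanly gives the sharper constant $3$ (and of course $3 \leq 5$, so the lemma as stated follows a fortiori). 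Both proofs are, as you emphasize, purely deterministic and independent of the RE or normalization conditions.
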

\noindent
We return to prove this intermediate lemma at the end of this
section. \\

Taking Lemma~\ref{LemAux} as given for the moment, we complete the
proof by bounding the error of the ordinary Lasso using Corollary~2
in~\cite{NegRavWaiYu12}.  With the specified choice of $\regparn$,
this corollary implies that
\begin{align}
\label{eqn:l2-upper-bound-without-sparsity-constraint}
\|\PertHat\|_2^2 & \leq \frac{64}{\RECONSQ} \; \frac{\sigma^2 \kdim
  \log \usedim}{\numobs},
\end{align}
with probability at least $1 - 2 e^{-\UNICON_4 \kdim \log \usedim}$.
Combining the bound~\eqref{eqn:l2-upper-bound-without-sparsity-constraint},
Lemma~\ref{LemAux} and inequality~\eqref{EqnSuperAnnoyBound} yields
the claim of the proposition.

Finally, we return to prove Lemma~\ref{LemAux}.  Throughout this
proof, we use $\thetahat$ as a shorthand for the thresholded estimator
$\thetathr$.  Since $\thetahat$ is the $\kdim$-truncated version of
$\thetaregparn$, if \mbox{$|\supp(\thetaregparn)| \leq \kdim$,} we
must have $\thetahat = \thetaregparn$, in which case the proof is
complete.  Otherwise, we may assume that $|\supp(\thetaregparn)| >
\kdim$, and then define the set $\HACKSET \defn \big
\{\supp(\thetastar) \cap \supp(\thetaregparn) \big \} \backslash
\supp(\thetahat)$, corresponding to ``good'' entries in the estimate
$\thetaregparn$ that were lost in the truncation.  Introduce the
notation $\HACKSET = \{i_1, \ldots, i_\tdim\}$ where $\tdim =
|\HACKSET|$.  Since $|\supp(\thetastar)| \leq \kdim$, the set
$\supp(\thetaregparn) \backslash \supp(\thetastar)$ must contain at
least $\tdim$ indices, say $B = \{j_1, \ldots, j_\tdim\}$,
corresponding to ``bad'' entries of $\thetaregparn$ that were
preserved in the truncation.  By definition of the truncation
operation, they satisfy the bound
\begin{align}
\label{eqn:truncated-index-value-comparison}
  |(\thetaregparn)_{i_s}| \leq |(\thetaregparn)_{j_s}| \qquad
  \mbox{for all $s = 1, \ldots, t$.}
\end{align}
Now consider the decomposition
\begin{align}
\label{EqnDecomposition}
\|\thetahat - \thetastar\|_2^2 & = \sum_{i=1}^\usedim (\thetahat_i -
\thetastar_i)^2 = \sum_{s=1}^t
\Big((\thetahat_{i_s}-\thetastar_{i_s})^2 +
(\thetahat_{j_s}-\thetastar_{j_s})^2\Big) + \sum_{i \notin B \cup
  \HACKSET} (\thetahat_i - \thetastar_i)^2.
\end{align}
The proof will be complete if we can establish the two inequalities
\begin{subequations}
\begin{align}
\label{EqnInqA}
(\thetahat_i - \thetastar_i)^2 & \leq ((\thetaregparn)_i -
\thetastar_i)^2 \quad \mbox{if $i \notin B \cup \HACKSET$, and} \\
\label{EqnInqB}
  (\thetahat_{i_s} - \thetastar_{i_s})^2 + (\thetahat_{j_s} -
\thetastar_{j_s})^2 & \leq 5 ( (\thetaregparn)_{i_s} -
\thetastar_{i_s})^2 + 5((\thetaregparn)_{j_s} - \thetastar_{j_s})^2
\qquad \mbox{for $s = 1, \ldots, t$.}
\end{align}
\end{subequations}
Inequality~\eqref{EqnInqA} is straightforward, since $i \notin B
\cup \HACKSET$ implies either $\thetahat_i = (\thetaregparn)_i$ or
$\thetahat_i = \thetastar_i = 0$.

Turning to the inequality~\eqref{EqnInqB}, note that
$\thetahat_{i_s}=\thetastar_{j_s} = 0$, so that it is equivalent to
upper bound $(\thetastar_{i_s})^2 + (\thetaregparn)^2_{j_s}$.  We
divide the proof of inequality~\eqref{EqnInqB} into two cases:

\paragraph{Case 1:}  First, suppose that
 $|(\thetaregparn)_{i_s}- \thetastar_{i_s}| \leq
\frac{1}{2}|\thetastar_{i_s}|$.  In this case, we have
$|(\thetaregparn)_{i_s}| \geq \frac{1}{2}|\thetastar_{i_s}|$, and
hence by inequality~\eqref{eqn:truncated-index-value-comparison}, we
obtain $|(\thetaregparn)_{j_s}| \geq \frac{1}{2}|\thetastar_{i_s}|$.
Combining the pieces yields
\begin{align*}
(\thetastar_{i_s})^2 + ((\thetaregparn)_{j_s})^2 & \leq 5
  ((\thetaregparn)_{j_s})^2 \; \leq \; 5 ( ( \thetaregparn)_{i_s} -
  \thetastar_{i_s})^2 + 5 ( (\thetaregparn)_{j_s}-\thetastar_{j_s})^2.
\end{align*}

\paragraph{Case 2:}
On the other hand, if $|(\thetaregparn)_{i_s}- \thetastar_{i_s}| >
\frac{1}{2}|\thetastar_{i_s}|$, then we have
\begin{align*}
(\thetahat_{i_s} - \thetastar_{i_s})^2 + (\thetahat_{j_s} -
  \thetastar_{j_s})^2 & = (\thetastar_{i_s})^2 +
  ((\thetaregparn)_{j_s})^2 \\
& < 4((\thetaregparn)_{i_s}- \thetastar_{i_s})^2 +
  ((\thetaregparn)_{j_s})^2 \\
& \leq 5 ( (\thetaregparn)_{i_s} - \thetastar_{i_s})^2 + 5 (
  (\thetaregparn)_{j_s} - \thetastar_{j_s} )^2.
  \end{align*}
Combining the two cases completes the proof of
inequality~\eqref{EqnInqB}.


\section{Singular values of random matrices}
\label{AppGaussRandMat}

In this section, we provide some background on the singular value of
Gaussian random matrices, required in the proof of
Theorem~\ref{ThmMain}(a).  Our results apply to a random matrix $A \in
\real^{\numobs \times \usedim}$ formed of i.i.d. $N(0,1)$ entries.
Recall the set $\ConeSet(\PlainSset) = \big\{ \theta \in \real^\usedim
\, \mid \, \|\theta_{\Sbar}\|_1 \leq 3 \|\theta_{\PlainSset}\|_1 \big
\}$ that is involved in the RE condition~\eqref{EqnDefnRE}.

\begin{lemma}
\label{lemma:eigenvalue-lower-upper-bound}
Suppose that $\numobs > \UNICON_0 \, \kdim \log \usedim$ for a
sufficiently large constant $\UNICON_0$.  Then there are universal
constants $\UNICON_j, j = 1, 2$ such that
\begin{subequations}
\begin{align}
\label{eqn:eigenvalue-upper-bound}
\frac{\|A \theta\|_2}{\sqrt{\numobs}} & \leq 3 \|\theta\|_2 \qquad
\mbox{for all $\theta \in \Ball_0(2 \kdim)$, and } \\
\label{eqn:eigenvalue-lower-bound}
\frac{\|A\theta\|_2}{\sqrt{\numobs}} & \geq \frac{\|\theta\|_2}{8}
\qquad \mbox{for all $\theta \in \bigcup \limits_{\substack{
      \PlainSset \subset\{1,\dots, \usedim \} \\ |\PlainSset| =
      \kdim}} \ConeSet(\PlainSset)$},
\end{align}
\end{subequations}
where both bounds hold with probability at least $1- \UNICON_1 \exp(-
\UNICON_2 \numobs)$.
\end{lemma}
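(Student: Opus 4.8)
The plan is to establish the two bounds separately and then combine them by a union bound, adjusting the universal constants at the end.

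\textbf{Upper bound~\eqref{eqn:eigenvalue-upper-bound}.} By homogeneity it suffices to bound $\sup\{\|A\theta\|_2 : \theta \in \Ball_0(2\kdim),\ \|\theta\|_2 = 1\}$. Since any $2\kdim$-sparse unit vector is supported on some index set $\PlainSset$ of size $2\kdim$, this supremum equals $\max_{|\PlainSset| = 2\kdim}\sigma_{\max}(A_{\PlainSset})$, where $A_{\PlainSset}\in\real^{\numobs\times 2\kdim}$ is the column submatrix indexed by $\PlainSset$. For a fixed $\PlainSset$, the matrix $A_{\PlainSset}$ has i.i.d. standard Gaussian entries, so the classical bound gives $\Exs[\sigma_{\max}(A_{\PlainSset})]\leq \sqrt{\numobs}+\sqrt{2\kdim}$; since $\sigma_{\max}(\cdot)$ is $1$-Lipschitz in the Frobenius norm, Gaussian concentration yields $\sigma_{\max}(A_{\PlainSset})\leq \sqrt{\numobs}+\sqrt{2\kdim}+s$ with probability at least $1-e^{-s^2/2}$. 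Choosing $s=\sqrt{\numobs}$ and using $2\kdim\leq\numobs$ (valid once $\UNICON_0\geq 2$) gives $\sigma_{\max}(A_{\PlainSset})\leq 3\sqrt{\numobs}$ with probability at least $1-e^{-\numobs/2}$. A union bound over the at most $\usedim^{2\kdim}=e^{2\kdim\log\usedim}$ choices of $\PlainSset$, together with the hypothesis $\numobs>\UNICON_0\kdim\log\usedim$, shows that~\eqref{eqn:eigenvalue-upper-bound} holds with probability at least $1-e^{-\UNICON_2\numobs}$ provided $\UNICON_0$ is chosen sufficiently large.

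\textbf{Lower bound~\eqref{eqn:eigenvalue-lower-bound}.} The first step is to replace the cone by a simpler set. If $\theta\in\ConeSet(\PlainSset)$ with $|\PlainSset|=\kdim$ and $\|\theta\|_2=1$, then $\|\theta\|_1=\|\theta_{\PlainSset}\|_1+\|\theta_{\PlainSbar}\|_1\leq 4\|\theta_{\PlainSset}\|_1\leq 4\sqrt{\kdim}\,\|\theta\|_2=4\sqrt{\kdim}$. Hence the intersection of $\bigcup_{|\PlainSset|=\kdim}\ConeSet(\PlainSset)$ with the unit sphere lies in the set $T\defn\{\theta\in\real^\usedim: \|\theta\|_2=1,\ \|\theta\|_1\leq 4\sqrt{\kdim}\}$, and it suffices to prove $\inf_{\theta\in T}\|A\theta\|_2\geq\sqrt{\numobs}/8$ with high probability, the bound for general $\theta$ in the cone then following by homogeneity. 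Here I would invoke Gordon's Gaussian comparison inequality: for any closed subset $T$ of the unit sphere of $\real^\usedim$, $\Exs\big[\inf_{\theta\in T}\|A\theta\|_2\big]\geq \Exs\|g\|_2-\mathcal{W}(T)$, where $g\sim N(0,I_\numobs)$ and $\mathcal{W}(T)\defn\Exs[\sup_{\theta\in T}\langle h,\theta\rangle]$ with $h\sim N(0,I_\usedim)$ is the Gaussian width of $T$. Using $\Exs\|g\|_2\geq\sqrt{\numobs-1}\geq\sqrt{\numobs}/2$ and the bound $\mathcal{W}(T)\leq 4\sqrt{\kdim}\,\Exs\|h\|_\infty\leq C\sqrt{\kdim\log\usedim}$ (since $\sup_{\theta\in T}\langle h,\theta\rangle\leq\|\theta\|_1\|h\|_\infty\leq 4\sqrt{\kdim}\,\|h\|_\infty$), we get $\Exs\big[\inf_{\theta\in T}\|A\theta\|_2\big]\geq\sqrt{\numobs}/2-C\sqrt{\kdim\log\usedim}\geq\sqrt{\numobs}/4$ once $\UNICON_0$ is large enough, by the hypothesis $\numobs>\UNICON_0\kdim\log\usedim$. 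Finally, the map $A\mapsto\inf_{\theta\in T}\|A\theta\|_2$ is again $1$-Lipschitz in the Frobenius norm, so Gaussian concentration gives $\inf_{\theta\in T}\|A\theta\|_2\geq\sqrt{\numobs}/4-\sqrt{\numobs}/8=\sqrt{\numobs}/8$ with probability at least $1-e^{-\numobs/128}$. Combining the two events and shrinking $\UNICON_2$ if necessary completes the proof. (A covering-net argument over the cone, in the spirit of Raskutti et al.~\cite{raskutti2011minimax}, would also work, but Gordon's inequality avoids the peeling step.)

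I expect the main obstacle to be the lower bound, specifically carrying out the Gordon / Gaussian-width step with enough care that the mild threshold $\numobs\succsim\kdim\log\usedim$ (rather than a larger polynomial in $\kdim$) suffices: this requires the tight cone-to-$\ell_1$-ball containment above and the correct bound $\Exs\|h\|_\infty\lesssim\sqrt{\log\usedim}$. Everything else — the operator-norm bound for Gaussian submatrices, Gaussian concentration for $1$-Lipschitz functions, and the union bound over supports — is routine.
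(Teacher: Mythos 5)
Your proof is correct and takes essentially the same approach as the paper: the upper bound argument (union bound over operator norms of Gaussian column submatrices of cardinality $2\kdim$) is identical, and for the lower bound the paper simply cites the main result of Raskutti et al.~\cite{raskutti2010restricted} on restricted eigenvalues of Gaussian designs, whose proof is precisely the Gordon comparison inequality combined with the cone-to-$\ell_1$-ball containment that you carry out explicitly.
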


\begin{proof}
To prove the upper bound~\eqref{eqn:eigenvalue-upper-bound}, it suffices to show that $\max_{|\PlainSset| = 2 \kdim}
\frac{\opnorm{A_\Tset}}{\sqrt{\numobs}}\leq 3$, where $A_\Tset$ denotes the columns
of $A$ whose indices are in set $\Tset$, and $\opnorm{\cdot}$
denotes the maximum singular value of a matrix.  By known bounds on
singular values of Gaussian random matrices~\cite{rudelson2010non}, for any
subset $\Tset$ of cardinality $2 \kdim$, we have
\begin{align*}
\mprob \big [ \opnorm{A_\Tset} \geq \sqrt{\numobs} + \sqrt{2\kdim} + \delta \big] & \leq 2\exp(- \delta^2/2) \qquad \mbox{for all
  $\delta > 0$.}
\end{align*}
If we let $\delta = \sqrt{\numobs}$ and use the assumption that $2\kdim \leq \numobs$, the above bound
becomes
\begin{align*}
  \mprob \big [ \opnorm{A_\Tset} \geq 3\sqrt{\numobs} \big] & \leq 2\exp(-\numobs/2).
\end{align*}
Noting that there are ${\usedim \choose 2 \kdim} \leq e^{2 \kdim \log(
  \frac{e \usedim}{2 \kdim})}$ subsets of cardinality $2 \kdim$, the
claim thus follows by union bound.

On the other hand, the lower bound~\eqref{eqn:eigenvalue-lower-bound}
is implied by the main result of Raskutti et
al.~\cite{raskutti2010restricted}.

\end{proof}


\bibliographystyle{abbrvnat} \bibliography{bib}

\begin{thebibliography}{21}
\providecommand{\natexlab}[1]{#1}
\providecommand{\url}[1]{\texttt{#1}}
\expandafter\ifx\csname urlstyle\endcsname\relax
  \providecommand{\doi}[1]{doi: #1}\else
  \providecommand{\doi}{doi: \begingroup \urlstyle{rm}\Url}\fi

\bibitem[Amini and Wainwright(2009)]{AmiWai08}
A.~A. Amini and M.~J. Wainwright.
\newblock High-dimensional analysis of semidefinite relaxations for sparse
  principal component analysis.
\newblock \emph{Annals of Statistics}, 5:\penalty0 2877--2921, 2009.

\bibitem[Arora and Barak(2009)]{arora2009computational}
S.~Arora and B.~Barak.
\newblock \emph{Computational Complexity: A Modern Approach}.
\newblock Cambridge University Press, 2009.

\bibitem[Berthet and Rigollet(2012)]{BerRig12}
Q.~Berthet and P.~Rigollet.
\newblock Optimal detection of sparse principal components in high dimensions.
\newblock Technical report, Princeton University, 2012.
\newblock arxiv1202.5070.

\bibitem[Berthet and Rigollet(2013)]{BerRig13}
Q.~Berthet and P.~Rigollet.
\newblock Computational lower bounds for sparse {PCA}.
\newblock Technical report, Princeton University, April 2013.
\newblock arxiv1304.0828.

\bibitem[Bickel et~al.(2009)Bickel, Ritov, and
  Tsybakov]{bickel2009simultaneous}
P.~J. Bickel, Y.~Ritov, and A.~B. Tsybakov.
\newblock Simultaneous analysis of {L}asso and {D}antzig selector.
\newblock \emph{Annals of Statistics}, 37\penalty0 (4):\penalty0 1705--1732,
  2009.

\bibitem[Bunea et~al.(2007)Bunea, Tsybakov, and Wegkamp]{BunWegTsyb07}
F.~Bunea, A.~Tsybakov, and M.~Wegkamp.
\newblock Aggregation for {G}aussian regression.
\newblock \emph{Annals of Statistics}, 35\penalty0 (4):\penalty0 1674--1697,
  2007.

\bibitem[Candes and Tao(2007)]{candes2007dantzig}
E.~Candes and T.~Tao.
\newblock The {D}antzig selector: statistical estimation when $p$ is much
  larger than $n$.
\newblock \emph{Annals of Statistics}, 35\penalty0 (6):\penalty0 2313--2351,
  2007.

\bibitem[Chandrasekaran et~al.(2012)Chandrasekaran, Recht, Parrilo, and
  Willsky]{ChaRec12}
V.~Chandrasekaran, B.~Recht, P.~A. Parrilo, and A.~S. Willsky.
\newblock The convex geometry of linear inverse problems.
\newblock \emph{Foundations of {C}omputational {M}athematics}, 12\penalty0
  (6):\penalty0 805--849, 2012.

\bibitem[Chen et~al.(1998)Chen, Donoho, and Saunders]{chen1998atomic}
S.~S. Chen, D.~L. Donoho, and M.~A. Saunders.
\newblock Atomic decomposition by basis pursuit.
\newblock \emph{SIAM Journal on Scientific Computing}, 20\penalty0
  (1):\penalty0 33--61, 1998.

\bibitem[Foygel and Srebro(2011)]{FoySre11}
R.~Foygel and N.~Srebro.
\newblock Fast rate and optimistic rate for $\ell_1$-regularized regression.
\newblock Technical report, Toyoto Technological Institute, 2011.
\newblock arXiv:1108.037v1.

\bibitem[Krauthgamer et~al.(2013)Krauthgamer, Nadler, and
  Vilenchik]{KraNadVil13}
R.~Krauthgamer, B.~Nadler, and D.~Vilenchik.
\newblock Do semidefinite relaxations really solve sparse {PCA}?
\newblock Technical report, Weizmann {I}nstitute of {S}cience, June 2013.
\newblock arXiv:1306.3690v1.

\bibitem[Ma and Wu(2013)]{ma2013computational}
Z.~Ma and Y.~Wu.
\newblock Computational barriers in minimax submatrix detection.
\newblock \emph{arXiv preprint arXiv:1309.5914}, 2013.

\bibitem[Meinshausen and Yu(2009)]{meinshausen2009lasso}
N.~Meinshausen and B.~Yu.
\newblock Lasso-type recovery of sparse representations for high-dimensional
  data.
\newblock \emph{Annals of Statistics}, 37\penalty0 (1):\penalty0 246--270,
  2009.

\bibitem[Natarajan(1995)]{natarajan1995sparse}
B.~K. Natarajan.
\newblock Sparse approximate solutions to linear systems.
\newblock \emph{SIAM Journal on {C}omputing}, 24\penalty0 (2):\penalty0
  227--234, 1995.

\bibitem[Negahban et~al.(2012)Negahban, Ravikumar, Wainwright, and
  Yu]{NegRavWaiYu12}
S.~Negahban, P.~Ravikumar, M.~J. Wainwright, and B.~Yu.
\newblock A unified framework for high-dimensional analysis of {$M$}-estimators
  with decomposable regularizers.
\newblock \emph{Statistical Science}, 27\penalty0 (4):\penalty0 538--557, 2012.

\bibitem[Oymak et~al.(2012)Oymak, Jalali, Fazel, Eldar, and Hassibi]{Oym12}
S.~Oymak, A.~Jalali, M.~Fazel, Y.~C. Eldar, and B.~Hassibi.
\newblock Simultaneously structured models with applications to sparse and
  low-rank matrices.
\newblock Technical report, Caltech, 2012.
\newblock arxiv1212.3753.

\bibitem[Raskutti et~al.(2010)Raskutti, Wainwright, and
  Yu]{raskutti2010restricted}
G.~Raskutti, M.~J. Wainwright, and B.~Yu.
\newblock Restricted eigenvalue properties for correlated {G}aussian designs.
\newblock \emph{Journal of Machine Learning Research}, 99:\penalty0 2241--2259,
  2010.

\bibitem[Raskutti et~al.(2011)Raskutti, Wainwright, and
  Yu]{raskutti2011minimax}
G.~Raskutti, M.~J. Wainwright, and B.~Yu.
\newblock Minimax rates of estimation for high-dimensional linear regression
  over $\ell_q$-balls.
\newblock \emph{IEEE Transactions on Information Theory}, 57\penalty0
  (10):\penalty0 6976--6994, 2011.

\bibitem[Rudelson and Vershynin(2010)]{rudelson2010non}
M.~Rudelson and R.~Vershynin.
\newblock Non-asymptotic theory of random matrices: extreme singular values.
\newblock \emph{arXiv:1003.2990}, 2010.

\bibitem[Tibshirani(1996)]{tibshirani1996regression}
R.~Tibshirani.
\newblock Regression shrinkage and selection via the {L}asso.
\newblock \emph{Journal of the Royal Statistical Society, Series B},
  58:\penalty0 267--288, 1996.

\bibitem[Van De~Geer and B{\"u}hlmann(2009)]{van2009conditions}
S.~A. Van De~Geer and P.~B{\"u}hlmann.
\newblock On the conditions used to prove oracle results for the {L}asso.
\newblock \emph{Electronic Journal of Statistics}, 3:\penalty0 1360--1392,
  2009.

\end{thebibliography}

\end{document}